\def\be{\begin{equation}}
\def\ee{\end{equation}}
\def\bq{\begin{eqnarray}}
\def\eq{\end{eqnarray}}
\def\beq{\begin{eqnarray*}}
\def\eeq{\end{eqnarray*}}
\newtheorem {theorem} {Theorem}[section]
\newtheorem {proposition} [theorem]{Proposition}
\newtheorem {corollary} [theorem]{Corollary}
\newtheorem {conjecture} [theorem]{Conjecture}
\begin{document}

\title[Cartesian approach for constrained mechanical systems .]
{Cartesian approach for constrained mechanical systems.}

\date{}
\dedicatory{}
\maketitle

\centerline{Rafael Ram\'{\i}rez}
\smallskip
\centerline{ Departament d'Enginyeria Inform\`{a}tica i
Matem\`{a}tiques.} \centerline{ Universitat Rovira i Virgili.}
\centerline{ Avinguda dels Pa\"{\i}sos Catalans 26, 43007
Tarragona,  Spain.}
\centerline{E-mail:rafaelorlando.ramirez@urv.cat} \centerline{and}
\smallskip
\centerline{Natalia Sadovskaia}
\smallskip
 \centerline{ Departament de
Matem\`{a}tica Aplicada II} \centerline{ Universitat
Polit\`{e}cnica de Catalunya,} \centerline{ C. Pau Gargallo 5,
08028 Barcelona,
 Spain.}
\centerline{E-mail:natalia.sadovskaia@upc.edu}

\maketitle

\begin{abstract}

In the history of mechanics, there have been two points of view
for studying mechanical systems:  Newtonian and  Cartesian.
\smallskip
According the Descartes point of view, the motion of mechanical
systems is described by the first-order differential equations in
the $N$ dimensional configuration space $\textsc{Q}.$
\smallskip
 In this paper we develop the Cartesian approach for mechanical
systems with constraints which are linear with respect to
velocity.
\end{abstract}

 {MSC:} (2000) { 70F35, 70H25, 34A34.}

{Keywords: }\quad {nonholonomic systems, Cartesian approach,
equations of motion,  geodesic flow, first integral, constraint,
inverse problem in dynamics. }

\section{Introduction.}
In "Philosophiae Naturalis Principia Mathematica" (1687), Newton
considers that movements of celestial bodies can be described by
differential equations of the second order. To determine their
trajectory, it is necessary to give the initial position and
velocity. To reduce the equations of motion to the investigation
of a dynamics system it is necessary to double the dimension of
the position space and to introduce the auxiliary phase space.
\smallskip
Descartes in 1644  proposed that the behavior of the celestial
bodies be studied from another point of view.  These ideas were
stated in "Principia Philosophiae" (1644) and in "Discours de la
m\'etode" (1637). According to Descarte the understanding of
cosmology starts from acceptance of the initial chaos, whose
moving elements are ordered according to certain fixed laws and
form the Cosmo. He consider that the Universe is filled with a
tenuous fluid matter (ether), which is constantly in a vortex
motion. This motion moves the largest particle of matter of the
vortex axis, and they subsequently form planets. Then, according
to what Descartes wrote in his "Treatise on Light", "the material
of the Heaven must be rotate the planets not only about the Sun
but also about their own centers...and this will hence form
several small Heavens rotating in the same direction as the great
Heaven."\cite{Koz1}. Thus the equation of motion in the Descartes
theory must be of the first order equation in the configuration
space $ \textsc{Q}$
  \[\dot{\textbf{x}}=\textbf{v}(x,t),\quad
  x\in\textsc{Q}\]
 Descartes gave no principles  for constructing the field $
 \textbf{v}$ for different mechanical systems.
 Hence, to determine the trajectory from Descartes's point of
view it is necessary to give only the initial position.
\smallskip
In the modern scientific literature the study of the Descarte
ideas we can find in the monographic of V.V. Kozlov \cite{Koz1} in
which the author  affirms  that  "solving dynamics problem is
possible inside the configuration space".

\smallskip

In \cite{Ram1,Ram4,Sad} we developed the Cartesian approach for
mechanical systems with three degrees of freedom  and with
constraints linear with respects to velocity. The aim of this
works is to generalized the Cartesian approach for non-holonomic
mechanical system with $N$ degrees of freedom and constraints
which are linear with respect to the velocity.

\smallskip
We shall present briefly the contents of the paper.

\smallskip
In section 2 we prove our  main results (see Theorem \ref{main},
Corollary \ref{main1}, Corollary \ref{main2}, Corollary
\ref{main0} below).

\smallskip

In section 3 Corollary \ref{main1} applied to determine Cartesian and lagrangian approach for
non-holonomic systems with three  degree of freedom.
We illustrate the obtained results to study Chapliguin-Caratheodory's sleigh and  to
 study Suslov's problem for the rigid body around a fixed point.

\smallskip

In section 4 we determine Cartesian and lagrangian approach in
three dimensional Euclidean space.
\smallskip

In section 5 by applying the results of the previous section we
study the integrability of the geodesic flow on the surface.

\smallskip

In section 6 Theorem \ref{main} applied to the study Gantmacher's
system and Rattleback.

\smallskip
In section 7 Corollary \ref{main2} applied to solve the inverse
problem in dynamics.

\smallskip

 For simplicity we shall assume the underlying functions to be of
 class $ \mathcal{C}^\infty,$ although most results remain valid
 under weaker hypotheses.

\smallskip

It is well known that the behavior of constrained Lagrangian system
\[
\langle\textsc{ Q},
L=\dfrac{1}{2}\sum_{j,k=1}^NG_{kj}(x)\dot{x}^j\dot{x}^k-U(x),\quad\sum_{k=1}^N\alpha_{jk}(x)\dot{x}^k=0,\quad
j=1,2,\ldots,M,\rangle
\]
can be described by the differential equations deduced from the
D'Alembert-Lagrange Principle\cite{Koz3,Ram2}
\begin{equation}\label{L21}
\displaystyle\frac{d}{dt}\displaystyle\frac{\partial
T}{\partial\dot{x}^k}-\displaystyle\frac{\partial
T}{\partial{x}^k}=\displaystyle\frac{\partial
U}{\partial{x}^k}+\sum_{j=1}^M\mu_j \alpha_{jk}(x),\quad
k=1,2,\ldots N,
\end{equation}
where $\mu_1,\,\mu_2,\ldots,\mu_M$ are Lagrangian multipliers.

\smallskip

Our main results are the following
\begin{theorem}\label{main}
Let Let $\textsc{Q}$ be a smooth manifold of the dimension $N$
with local coordinates $ x=(x^1,..., x^N)$ and equipped by the
Riemann metric $G=(G_{kj}{(x)})=(G_{kj}) $ and let
\[
{\sigma} =(\textbf{
v}(x),d\textbf{x})=\sum_{j,k=1}^NG_{kj}\dot{x}^jv^k,\] be the
1-form associated to the vector field
\begin{equation}\label{00}
 \textbf{v}=\frac{1}{\Upsilon}\left|
\begin{array}{ccccc}
\Omega_1(\partial_1)&\Omega_1(\partial_2)&\hdots  &
\Omega_1(\partial_N)&0\\
\vdots          & \vdots         & \hdots & \vdots&\vdots          \\
\Omega_{M}(\partial_1)&\Omega_{M}(\partial_2)&\hdots
&\Omega_{M}(\partial_N)&0\\
\Omega_{M+1}(\partial_1)&\Omega_{M+1}(\partial_2)&\hdots
&\Omega_{M+1}(\partial_N)&\lambda_{M+1}\\
\vdots          & \vdots         & \hdots & \vdots &\vdots         \\
\Omega_{N}(\partial_1)&\Omega_{N}(\partial_2)&\hdots
&\Omega_{N}(\partial_N)&\lambda_N\\
\partial_1&\partial_2&\hdots  & \partial_N&0\end{array}
         \right|,
\end{equation}
 where  $\Upsilon={\Omega_1\wedge{\Omega_2}...\wedge{\Omega_N}(\partial_1,\partial_2,...,
\partial_N)},$ \, $\partial_k=\displaystyle{\frac{\partial}{\partial{x^k}}},$
$ \lambda_{j}=\lambda_j(x)$  for $j=M+1,\ldots N$ are arbitrary functions  and $\Omega_k$ for $k=1,\ldots N$ are 1-forms on $\textsc{Q} $
which  we assume that satisfies the following conditions
\begin{itemize}
\item[(i)]
$ \Omega_j$ for $j=1,\ldots,M$ are a given 1-forms: $
\Omega_j=\displaystyle\sum_{k=1}^N\alpha_{jk}dx^k$ where
$\alpha_{jk}= \alpha_{jk}(x)$ are  functions on $ \textsc{Q},$

\item[(ii)] $ \Omega_{k}$ for $k=M+1,\ldots,N$ are arbitrary  1-forms which we choose in such a way that
$\Upsilon\ne{0}.$
\item[(iii)] The 2-form $d\sigma$ admits the development
 $$d\sigma=\dfrac{1}{2}\sum_{j,k=1}^Na_{jk}(x)\Omega_j\wedge\Omega_k$$
  where $A=(a_{jk})$ is a skew symmetric matrix such that$$ H=M^TAM$$ where $H=(d\sigma ( \partial_j,\,\partial_k))$ and $M=(\Omega_j( \partial_k))$ are $N\times N$ matrix.
\item[(iv)] The contraction of the 2-form $d\sigma$ along the vector field $ \textbf{v}$ is such that
\[ \imath_{ \textbf{v} }d\sigma=\sum_{j=1}^N\Lambda_j(x) \Omega_j, \]
 where $\Lambda=\left(\Lambda_1,\,\ldots,\Lambda_N\right)^T$  which we can be calculated as follows
\begin{equation}\label{02}
{{\Lambda}}=A\tilde{\lambda}=M^{-1}\tau,\,
\end{equation}
where  $\tilde{\lambda}=\left(\lambda_1,\ldots,\lambda_N\right)^T$
and
$\tau=\left(\iota_{{\textbf{v}}}d{\sigma}(\partial_1)\,,\ldots,\iota_{{\textbf{v}}}d{\sigma}(\partial_N)\right)^T$

 \end{itemize}

 Then the first order differential system on $ \textsc{Q}$
 \begin{equation}\label{L0}
 \dot{\textbf{x}}=\textbf{v}(x),\quad \mbox{under the conditions},\quad \quad \Lambda_j(x)=0,
   \end{equation}for $ j=M+1,\ldots N,$
 is invariant relationship of the second order differential system
 \begin{equation}\label{L2}
\displaystyle\frac{d}{dt}\displaystyle\frac{\partial
T}{\partial\dot{x}^k}-\displaystyle\frac{\partial
T}{\partial{x}^k}=\displaystyle\frac{\partial
\dfrac{1}{2}||\textbf{v}||^2}{\partial{x}^k}+\sum_{j=1}^M\Lambda_j \alpha_{jk},\quad
k=1,2,\ldots N,
\end{equation}
\end{theorem}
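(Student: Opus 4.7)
My plan is to substitute $\dot{x}^j = v^j(x)$ directly into the Euler--Lagrange operator
\[
E_k := \frac{d}{dt}\frac{\partial T}{\partial\dot{x}^k}-\frac{\partial T}{\partial{x}^k}
\]
and to show that the resulting expression in $x$ coincides pointwise with the right-hand side of \eqref{L2}, under the single algebraic condition $\Lambda_{M+1}=\ldots=\Lambda_N=0$. That will establish that the locus $\{\dot x^k = v^k,\ \Lambda_{M+1}=\ldots=\Lambda_N=0\}$ is an invariant relationship of the second-order system \eqref{L2}.

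The heart of the proof is the coordinate identity
\[
E_k\big|_{\dot{x}^j=v^j(x)} \;=\; \frac{\partial}{\partial x^k}\!\left(\tfrac{1}{2}\|\textbf{v}\|^2\right) \;+\; (\iota_{\textbf{v}}d\sigma)(\partial_k),
\]
which is the natural rewriting of the kinetic Euler--Lagrange operator along the autonomous vector field $\textbf{v}$. I would prove it by expanding $E_k = G_{kj}\ddot{x}^j+[ij,k]\dot{x}^i\dot{x}^j$ in Christoffel symbols of the first kind, replacing $\ddot{x}^j$ by $v^\ell\partial_\ell v^j$, and writing $\sigma_k = G_{kj}v^j$. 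The symmetry of $G$ collapses part of the Christoffel sum, reducing $E_k|_{\dot x=v}$ to $v^\ell\partial_\ell\sigma_k - \tfrac{1}{2} v^i v^j\partial_k G_{ij}$; subtracting $\partial_k(\tfrac{1}{2}\|\textbf{v}\|^2) = \tfrac{1}{2}v^iv^j\partial_k G_{ij} + \sigma_j\partial_k v^j$ leaves the antisymmetric combination $v^\ell(\partial_\ell\sigma_k - \partial_k\sigma_\ell)$, which by the definition of the exterior derivative and of the interior product is exactly $(\iota_{\textbf{v}}\, d\sigma)(\partial_k)$.

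Once this identity is in hand, hypothesis (iv) provides $\iota_{\textbf{v}}d\sigma = \sum_{j=1}^N \Lambda_j\,\Omega_j$; pairing with $\partial_k$ gives
\[
(\iota_{\textbf{v}}d\sigma)(\partial_k)=\sum_{j=1}^N \Lambda_j\,\Omega_j(\partial_k)=\sum_{j=1}^N \Lambda_j\,\alpha_{jk},
\]
where $\Omega_j(\partial_k)=\alpha_{jk}$ is the defining relation of the constraint forms for $j\le M$ (and by convention we write $\alpha_{jk}$ also for the coefficients of the completing forms $\Omega_{M+1},\ldots,\Omega_N$). Imposing the invariance constraints $\Lambda_{M+1}=\ldots=\Lambda_N=0$ truncates the sum to $\sum_{j=1}^M \Lambda_j\alpha_{jk}$, which is precisely the Lagrange-multiplier term on the right-hand side of \eqref{L2}. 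Hypotheses (ii) and (iii) together with formula \eqref{02} are not needed for the invariance itself; they are used only to guarantee the existence of the decomposition in (iv) and to supply the explicit computational representation $\Lambda = A\tilde\lambda = M^{-1}\tau$ of its coefficients.

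The main obstacle is the component identity highlighted in the second paragraph. It is purely algebraic but requires careful bookkeeping of the symmetry of $G_{ij}$, of the Christoffel symbols of the first kind, and of the antisymmetry inherent in $d\sigma$; once it is verified, the remainder of the theorem follows by a direct substitution into hypothesis (iv) together with the algebraic constraint on the last $N-M$ multipliers $\Lambda_j$.
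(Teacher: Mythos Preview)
Your argument is correct and arrives at the same key identity as the paper, namely
\[
\left.\left(\frac{d}{dt}\frac{\partial T}{\partial\dot{x}^k}-\frac{\partial T}{\partial x^k}\right)\right|_{\dot{\textbf{x}}=\textbf{v}}=\partial_k\!\left(\tfrac{1}{2}\|\textbf{v}\|^2\right)+(\iota_{\textbf{v}}d\sigma)(\partial_k),
\]
after which (iv) and the conditions $\Lambda_{M+1}=\cdots=\Lambda_N=0$ immediately give \eqref{L2}. Your derivation via Christoffel symbols of the first kind and the elementary manipulation $v^\ell\partial_\ell\sigma_k-\partial_k(\tfrac12\|\textbf{v}\|^2)=v^\ell(\partial_\ell\sigma_k-\partial_k\sigma_\ell)$ is clean and complete.

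The paper takes a slightly different route to the same identity. It first introduces the auxiliary vector field $\tilde{\textbf{v}}$ with \emph{all} $\lambda_1,\ldots,\lambda_N$ left free, and the auxiliary Lagrangian $\tilde L=\tfrac12\|\dot{\textbf{x}}-\tilde{\textbf{v}}\|^2$. Working with the Levi--Civita connection of $G$ (rather than coordinate Christoffel symbols), it derives the off-shell formula
\[
\frac{d}{dt}\frac{\partial T}{\partial\dot{x}^k}-\frac{\partial T}{\partial x^k}=\Bigl(\tfrac12 d\|\tilde{\textbf{v}}\|^2+\iota_{\tilde{\textbf{v}}}d\tilde\sigma\Bigr)(\partial_k)+\nabla_{\dot{\textbf{x}}-\tilde{\textbf{v}}}\,\tilde p_k,
\]
which reduces to your identity once $\dot{\textbf{x}}=\tilde{\textbf{v}}$, and then specializes by setting $\lambda_1=\cdots=\lambda_M=0$. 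The extra machinery buys two things: the detour through $\tilde L$ is exactly what yields Corollary~\ref{main0} for free, and the covariant formulation makes the metric compatibility $\nabla G=0$ do the bookkeeping you carry out by hand. Your approach, by contrast, is more elementary and goes straight to the target statement without the auxiliary Lagrangian or the extended $\tilde{\textbf{v}}$; it is a perfectly valid and somewhat shorter proof of Theorem~\ref{main} on its own.
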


Comparing equations \eqref{L21} and \eqref{L2} we deduce that the
latter can be interpreted as the equations describing the behavior
of non-holonomic mechanical systems under the action of active
forces with potential  $U=\dfrac{1}{2}||\textbf{v}||^2+U_0,\quad
U_0=const$ and under the action of the reactive forces with the
components
$$\left(\sum_{j=1}^M\Lambda_j\alpha_{j1},\,
\sum_{j=1}^M\Lambda_j\alpha_{j2},....,
\sum_{j=1}^M\Lambda_j\alpha_{jN}\right),$$ generated by the
constraints
$\Omega_j(\dot{\textbf{x}})=\displaystyle{\sum_{k=1}^N\alpha_{jk}\dot{x}^k}=0,$
for $ j=1,2,...M.$

\smallskip
Of interest is that the equations $ \Lambda_j=0$ for $j=M+1,\ldots,N$ or, which is the same
\begin{equation}\label{C3}
\Lambda_j=\sum_{k=1}^Na_{jk}\lambda_k=0,\quad a_{jk}=-a_{kj}
\end{equation} for $j=M+1,\ldots,N,$ represent a system of partial differential equations of first order with respect
to the functions $\lambda_k$ for $k=1,\ldots,N.$
\smallskip

\textbf{Definition}

\smallskip

We call the vector field $\textbf{v} $ which generated system \eqref{L0}
{\it Cartesian vector field}. The vector field $ \breve{\textbf{v}}$ we say {\it Cartesian equivalent} if there exist a nonzero
 function $\kappa$ on $\textsc{Q}$ such that $ \kappa\breve{\textbf{v}}$ is Cartesian vector field.
\smallskip

 \textbf{Definition}

\smallskip

Studying the behavior of nonholonomic mechanical systems with constraints linear with respect to the velocity  using the
    equations \eqref{L21},\,\eqref{L0} and  \eqref{L2} we  called {\it Classical,  Decartes } and {\it Lagrangian
     approach} respectively.

\smallskip

\begin{conjecture}\label{c4}
There are solutions of the equations \eqref{C3}  which generate a
Cartesian (or Cartesian equivalent) vector field which completely
describe the behavior of the study constrained Lagrangian system.
\end{conjecture}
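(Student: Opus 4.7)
The plan is to split the argument into three conceptual steps, with the nontrivial content concentrated in one coordinate identity. First, I would verify that $\textbf{v}$ defined by the determinant automatically satisfies the constraints $\Omega_j(\textbf{v}) = 0$ for $j = 1,\ldots,M$. Contracting $\Omega_j$ with $\textbf{v}$ amounts to replacing the bottom row $(\partial_1,\ldots,\partial_N,0)$ of the $(N+1)\times(N+1)$ matrix in \eqref{00} by $(\alpha_{j1},\ldots,\alpha_{jN},0)=(\Omega_j(\partial_1),\ldots,\Omega_j(\partial_N),0)$; for $j \le M$ this duplicates the $j$-th row, so the determinant vanishes. Hypothesis (ii) ensures both that $\textbf{v}$ is well defined and that $\Omega_1,\ldots,\Omega_N$ form a pointwise basis of the cotangent space.

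Next, I would evaluate the Euler--Lagrange operator of $T=\tfrac{1}{2}G_{ij}\dot{x}^i\dot{x}^j$ along an integral curve of $\textbf{v}$. Using $\ddot{x}^j=v^l\partial_l v^j$, writing $\sigma_k=G_{kj}v^j$, and performing the standard index calculation in which the terms involving $\partial_l G_{kj}$ cancel against their Christoffel-type counterparts, one arrives at the clean identity
\[
\left.\frac{d}{dt}\frac{\partial T}{\partial \dot{x}^k} - \frac{\partial T}{\partial x^k}\right|_{\dot{x}=v} \;=\; \bigl(\iota_{\textbf{v}} d\sigma\bigr)_k \;+\; \frac{\partial}{\partial x^k}\!\left(\tfrac{1}{2}||\textbf{v}||^2\right).
\]
Equivalently, the left-hand side is the lowered covariant derivative $(\nabla_{\textbf{v}}\textbf{v})^\flat_k$, and the displayed formula is the Riemannian identity $\nabla_v v = \iota_v d v^\flat + \tfrac{1}{2}d\,||v||^2$ written in coordinates.

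Substituting this identity into \eqref{L2}, the $\partial_k(\tfrac{1}{2}||\textbf{v}||^2)$ terms cancel on both sides, so the requirement that an integral curve of $\textbf{v}$ be a solution of \eqref{L2} reduces to $(\iota_{\textbf{v}} d\sigma)_k=\sum_{j=1}^{M}\Lambda_j\alpha_{jk}$ for every $k$. Hypothesis (iv) expands $\iota_{\textbf{v}} d\sigma=\sum_{j=1}^{N}\Lambda_j\Omega_j$, so the obstruction becomes $\sum_{j=M+1}^{N}\Lambda_j\alpha_{jk}=0$ for every $k$; by the linear independence of the $\Omega_j$'s from hypothesis (ii) this is equivalent to $\Lambda_j(x)\equiv 0$ for $j=M+1,\ldots,N$, which is precisely the side condition carried in \eqref{L0}. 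Formula \eqref{02} is then pure bookkeeping: condition (iii) writes $d\sigma$ in the $\Omega_j\wedge\Omega_k$ basis, the change-of-basis identity $H=M^T A M$ translates $A\tilde\lambda$ into coordinate components, and solving the resulting linear system for $\Lambda$ produces both equalities.

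The main obstacle is the identity in the second step. It is classical for Riemannian Lagrangians, but because the theorem is stated in arbitrary dimension $N$ it has to be written out carefully in index notation so that the Christoffel contributions coming from $\partial_k G_{ij}$ and $\partial_l G_{kj}$ can be seen explicitly to recombine into $\iota_{\textbf{v}} d\sigma$ and $d(\tfrac{1}{2}||\textbf{v}||^2)$; a slip here produces spurious symmetric terms that will not match the skew-symmetric structure of $A$ in hypothesis (iii). Once that identity is in hand, the constraint property of the determinant, the reduction of \eqref{L2} along $\dot{x}=v$, and the linear-algebra content of \eqref{02} are all routine.
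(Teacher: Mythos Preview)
You have proved the wrong statement. The item labeled Conjecture~\ref{c4} is not a theorem in the paper at all: it is an open conjecture, and the paper offers only heuristic support for it (a dimension count comparing the $2N-M$ initial data of \eqref{L21} with the $N$ initial data plus $N-M$ free functions in \eqref{L0}, together with the worked examples in later sections). There is no proof in the paper for you to compare against.

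What you have actually written is a correct outline of the proof of Theorem~\ref{main}: you show that any integral curve of the determinant vector field $\textbf{v}$, once the side conditions $\Lambda_j=0$ for $j=M+1,\ldots,N$ are imposed, solves the second-order system \eqref{L2}. That is precisely the content of Theorem~\ref{main}, and your approach is essentially the same as the paper's (the paper phrases the key identity via the covariant derivative $\nabla_{\dot{\textbf{x}}}(\dot{\textbf{x}}-\tilde{\textbf{v}})=0$ and the auxiliary Lagrangian $\tilde L=\tfrac12\|\dot{\textbf{x}}-\tilde{\textbf{v}}\|^2$, but the substance is your Riemannian identity $\nabla_v v=\iota_v dv^\flat+\tfrac12 d\|v\|^2$).

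The conjecture is a much stronger and vaguer claim: that among the solutions $\lambda_{M+1},\ldots,\lambda_N$ of the first-order PDE system \eqref{C3} one can find choices whose associated Cartesian (or Cartesian-equivalent) vector field \emph{completely} reproduces the dynamics of the original constrained system \eqref{L21}. Proving this would require, at minimum, (a) an existence argument for solutions of \eqref{C3} with enough free parameters, and (b) an argument that every trajectory of \eqref{L21} arises from some such solution---neither of which your proposal addresses. Your argument only shows the forward inclusion (solutions of \eqref{L0} solve \eqref{L2}), which is Theorem~\ref{main}, not the completeness assertion of the conjecture.
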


\smallskip

This conjecture supports the following facts.

First, in view of Theorem\ref{main} the solutions of \eqref{L0}
are solutions of \eqref{L2}, which is closely linked to the system
\eqref{L21}. Second, the solutions of the equations \eqref{L21}
depend on the $2N-M$ initial conditions. The solutions of
\eqref{L0} depend on $N$ initial conditions and $N-M$ functions
which are solutions of the linear partial differential equations
\eqref{C3}, therefore the solutions of the equations \eqref{C3}
also depend on $N-M$ arbitrary constants. Finally, also contribute
to the strengthening of the conjecture the large number of
applications given below.
\begin{corollary}\label{main1}
Let us suppose that  in Theorem \ref{main} manifold $ \textsc{Q}$
is three dimensional smooth manifold with local coordinates
$\textbf{x}=(x,y,z)$ and the given 1-form is
$\Omega=a_1dx+a_2dy+a_3dz=(\textbf{a},d\textbf{x})$ where
$a_j=a_j(x,y,z)$ for $j=1,2,3$ are  functions on $ \textsc{Q}.$

Denoting by\quad$[\quad\times \quad]$ the vector product in
$\mathbf{R}^3$ and by $\mbox{rot}{\textbf{v}},\,\textbf{a}
,\,\textbf{w}$ the following vectors fields
\[\begin{array}{ll}
\mbox{rot}{\textbf{v}}=&\displaystyle{\dfrac{1}{\sqrt{det{G}}}}\left((\partial_yp_3-\partial_zp_2),\,
(\partial_zp_1-\partial_xp_3),\,
(\partial_xp_2-\partial_yp_1)\right)^T,\vspace{0.20cm}\\

\textbf{w}==&\dfrac{1}{\Upsilon}\left((\lambda_2\Omega_3-
\lambda_3\Omega_2)(\partial_x),(\lambda_2\Omega_3-
\lambda_3\Omega_2)(\partial_y),(\lambda_2\Omega_3-
\lambda_3\Omega_2)(\partial_z)\right)^T.
\end{array}
\]
where $p_k=\displaystyle\sum_{j=1}^3G_{kj}v^j,$ for $k=1,2,3,$
then differential system \eqref{L0} and \eqref{L2} take the form respectively
\begin{equation}\label{L1}
\dot{\textbf{x}}=[\textbf{a}\times \textbf{w}]=\textbf{v}(x),\quad
\left(\textbf{a},\mbox{rot}[\textbf{a}\times \textbf{w}]\right)=0.
\end{equation}
\begin{equation}\label{LLL1}
\begin{array}{cc}
\displaystyle\frac{d}{dt}\displaystyle\frac{\partial,
T}{\partial\dot{\textbf{x}}}-\displaystyle\frac{\partial
T}{\partial{\textbf{x}}}=\displaystyle\frac{\partial
\dfrac{1}{2}||\textbf{v}||^2}{\partial{\textbf{x}}}-\left(\textbf{w},\,\mbox{rot}[\textbf{a}\times
\textbf{w}]\right)\textbf{a}\vspace{0.20cm}\\=\displaystyle\frac{\partial
\dfrac{1}{2}||\textbf{v}||^2}{\partial{\textbf{x}}}-\Omega_2\wedge\Omega_3(\textbf{v},\mbox{rot}\textbf{v})
\textbf{a},
\end{array}
\end{equation}
where $\displaystyle{\frac{\partial
}{\partial\dot{\textbf{x}}}}=\left(\partial_{\dot{x}^1},\ldots ,\partial_{\dot{x}^N}\right)^T,\quad \displaystyle\frac{\partial
}{\partial{\textbf{x}}}=\left(\partial_{{x}^1},\ldots ,\partial_{{x}^N}\right)^T.$
\end{corollary}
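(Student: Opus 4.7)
My plan is to read Corollary~\ref{main1} as the direct specialization of Theorem~\ref{main} to $N=3,\,M=1$, translating every quantity through the usual dictionary between 3D exterior algebra and vector calculus. First I would fix $\Omega_1=\Omega=(\textbf{a},d\textbf{x})$ together with the two auxiliary 1-forms $\Omega_2,\Omega_3$, so that the free Lagrange multipliers reduce to $\lambda_2$ and $\lambda_3$. The vector field $\textbf{v}$ from \eqref{00} is then a $4\times 4$ determinant whose last column has only the two nonzero entries $\lambda_2,\lambda_3$. Expanding along that column leaves two vector-valued $3\times 3$ minors, each with $(\partial_1,\partial_2,\partial_3)$ as its bottom row, so that expansion along the bottom row identifies each minor with the cross product of the coefficient vectors of the two 1-forms in the upper rows. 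Collecting signs gives $\textbf{v}=\frac{1}{\Upsilon}[\textbf{a}\times(\lambda_2\textbf{c}_3-\lambda_3\textbf{c}_2)]=[\textbf{a}\times\textbf{w}]$, where $\textbf{c}_j$ is the coefficient vector of $\Omega_j$; this $\textbf{w}$ agrees with the one in the statement because $\Omega_j(\partial_k)=(\textbf{c}_j)_k$.

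Next I would show that the Cartesian conditions $\Lambda_2=\Lambda_3=0$ of \eqref{L0} collapse to the single scalar equation $(\textbf{a},\mbox{rot}[\textbf{a}\times\textbf{w}])=0$ displayed in \eqref{L1}. The key observation is that $\Omega(\textbf{v})=(\textbf{a},[\textbf{a}\times\textbf{w}])=0$ identically. Applying the Leibniz rule for interior multiplication to the 3-form $\Omega\wedge d\sigma$ gives $\Omega\wedge\imath_{\textbf{v}}d\sigma=\Omega(\textbf{v})\,d\sigma-\imath_{\textbf{v}}(\Omega\wedge d\sigma)=-\imath_{\textbf{v}}(\Omega\wedge d\sigma)$. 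Writing $\Omega\wedge d\sigma=f\,dx^1\wedge dx^2\wedge dx^3$, the right-hand side is $-f\,\imath_{\textbf{v}}(dx^1\wedge dx^2\wedge dx^3)$, which vanishes only when $f=0$ (the contraction is a non-degenerate 2-form for $\textbf{v}\neq 0$). Reading off $f$ from the corollary's definition of $\mbox{rot}\,\textbf{v}$ gives $f=\sqrt{\det G}\,(\textbf{a},\mbox{rot}\,\textbf{v})$, so $f=0$ is exactly the asserted constraint.

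For \eqref{LLL1} I would specialize \eqref{L2}: because $M=1$, the reactive-force sum reduces to $\Lambda_1\textbf{a}$, and $\Lambda_1$ is extracted by evaluating $\imath_{\textbf{v}}d\sigma=\Lambda_1\Omega$ on $\textbf{w}$. Using the identity $d\sigma(\textbf{u}_1,\textbf{u}_2)=\sqrt{\det G}(\mbox{rot}\,\textbf{v},\textbf{u}_1\times\textbf{u}_2)$ (the 3D Hodge-dual form of the curl), the vector triple product expansion of $\textbf{v}\times\textbf{w}=(\textbf{a}\times\textbf{w})\times\textbf{w}$, and the Step~2 constraint $(\textbf{a},\mbox{rot}\,\textbf{v})=0$ (which kills the $\textbf{a}$-part of that triple product), the factor $(\textbf{a},\textbf{w})$ cancels and I obtain $\Lambda_1=-(\textbf{w},\mbox{rot}[\textbf{a}\times\textbf{w}])$, giving the first form of \eqref{LLL1}. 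The alternative form $-\Omega_2\wedge\Omega_3(\textbf{v},\mbox{rot}\,\textbf{v})\textbf{a}$ follows by substituting the defining formula of $\textbf{w}$ into this scalar and using the bilinear expansion $\Omega_2\wedge\Omega_3(\textbf{v},\mbox{rot}\,\textbf{v})=\Omega_2(\textbf{v})\Omega_3(\mbox{rot}\,\textbf{v})-\Omega_3(\textbf{v})\Omega_2(\mbox{rot}\,\textbf{v})$.

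The main obstacle is not a deep new idea but the careful sign- and metric-factor book-keeping when passing between the coordinate-free formulation of Theorem~\ref{main} and the vectorial form of the corollary; in particular the normalization of $\mbox{rot}\,\textbf{v}$ by $1/\sqrt{\det G}$ in the statement must be tracked consistently. The one genuinely non-routine step is the collapse in Step~2: \emph{a priori}, $\imath_{\textbf{v}}d\sigma\parallel\Omega$ is two independent scalar conditions in 3D, but the automatic identity $\Omega(\textbf{v})=0$ forces both $\Omega$ and $\imath_{\textbf{v}}d\sigma$ to lie in the 2D plane perpendicular to $\textbf{v}$, reducing proportionality there to the single scalar equation $(\textbf{a},\mbox{rot}\,\textbf{v})=0$.
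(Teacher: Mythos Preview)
Your approach is correct and arrives at the same conclusions, but it differs from the paper's own argument in how the coefficients $\Lambda_1,\Lambda_2,\Lambda_3$ are extracted. The paper proceeds in one stroke: it writes the contraction in vector form as $\imath_{\textbf v}d\sigma=([\textbf v\times\mbox{rot}\,\textbf v],d\textbf x)$ and then, using the dual-basis formula $\Upsilon\Lambda_j=\Omega_k\wedge\Omega_l(\textbf v,\mbox{rot}\,\textbf v)$ (cyclic in $j,k,l$), reads off all three $\Lambda_j$ simultaneously, obtaining $\Upsilon\Lambda_2=\lambda_3(\textbf a,\mbox{rot}\,\textbf v)$, $\Upsilon\Lambda_3=-\lambda_2(\textbf a,\mbox{rot}\,\textbf v)$, and $\Upsilon\Lambda_1=\Omega_2\wedge\Omega_3(\textbf v,\mbox{rot}\,\textbf v)=(\textbf w,\mbox{rot}\,\textbf v)$. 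Your route is more structural: for the constraint you bypass the individual $\Lambda_2,\Lambda_3$ computations via the Leibniz identity $\Omega\wedge\imath_{\textbf v}d\sigma=-\imath_{\textbf v}(\Omega\wedge d\sigma)$, reducing $\Lambda_2=\Lambda_3=0$ to the vanishing of the top-form coefficient $(\textbf a,\mbox{rot}\,\textbf v)$; and for $\Lambda_1$ you evaluate the (now collapsed) equation $\imath_{\textbf v}d\sigma=\Lambda_1\Omega$ on $\textbf w$ and exploit the triple-product expansion together with the already-established constraint to cancel the factor $(\textbf a,\textbf w)$. The paper's method is more explicit and yields the three $\Lambda_j$ independently of one another, which makes the second expression in \eqref{LLL1} immediate; your method is cleaner for the constraint equation but makes the computation of $\Lambda_1$ depend on having first imposed $\Lambda_2=\Lambda_3=0$, and assumes $(\textbf a,\textbf w)\neq 0$ (equivalently $\textbf v\neq 0$) for the cancellation step. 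Both are valid; the residual sign- and $\sqrt{\det G}$-factor bookkeeping you flag is indeed the only delicate point, and is handled no more carefully in the paper than in your outline.
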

\begin{corollary}\label{main2}
Let us introduce the notation
\[\left|
\begin{array}{ccccc}
 df_1(\partial_1)& \ldots& df_1 (\partial_N) \\
  \vdots & & \vdots \\
   df_{N-1}(\partial_1)
& \ldots & df_{N-1}(\partial_N)\\
\partial_1 &  \ldots & \partial_N \end{array}
         \right|=\{f_1,\,f_2,\,\ldots,f_{N-1},\,*\}
\]
and suppose that  in Theorem \ref{main} the given independent
1-form   are such that $\Omega_j=df_j(x)$ for $ j=1,2,...,N-1,$
and the arbitrary 1-form it is also exact, i.e.
$\Omega_N=df_N,\quad \Omega_N (\textbf{v})=\lambda_N,$ and such
that $\Upsilon=\{f_1,\,f_2,\,\ldots,f_{N-1},\,f_N\}\ne{0}.$

Then the equations \eqref{L0} and \eqref{L2} take the form
respectively
\begin{equation}\label{C0}
\begin{array}{ll}
\dot{\textbf{x}} = -\lambda_N\dfrac{\{f_1,\,f_2,\,\ldots,f_{N-1},\,\,*\}}{\{f_1,\,f_2,\,\ldots,f_{N-1},f_N\}}=
\lambda\{f_1,\,f_2,\,\ldots,f_{N-1},\,\,*\}, \vspace{0.20cm}\\
\displaystyle\frac{d}{dt}\displaystyle\frac{\partial
T}{\partial\dot{x}^k}-\displaystyle\frac{\partial
T}{\partial{x}^k} =\displaystyle\frac{\partial
\dfrac{1}{2}||\textbf{v}||^2}{\partial{x}^k}+\lambda\sum_{j=1}^{N-1}a_{Nj}df_j(\partial_k),
\end{array}
\end{equation}
 where $\lambda_N=\lambda_N(x)$ is an arbitrary function and $a_{Nj}=a_{Nj}(x)$ for $j=1,\ldots N-1$ are element of the skew symmetric matrix $A.$
\end{corollary}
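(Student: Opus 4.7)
The plan is to invoke Theorem~\ref{main} and carry out the substitutions it prescribes in the special case $\Omega_j=df_j$ for $j=1,\dots,N$ with $M=N-1$; no analytic input beyond the theorem is required, and the entire content of the corollary reduces to bookkeeping with determinants and the skew-symmetry of~$A$.

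For the first equation in \eqref{C0}, the hypothesis gives $\Omega_j(\partial_k)=\partial f_j/\partial x^k$, so the last column of the $(N+1)\times(N+1)$ determinant in \eqref{00} vanishes except for the single entry $\lambda_N$ in row $N$. Expanding along that column picks up the sign $(-1)^{N+(N+1)}=-1$ and leaves an $N\times N$ minor whose rows are $df_1,\dots,df_{N-1}$ followed by the symbolic row $(\partial_1,\dots,\partial_N)$; that minor is precisely the bracket $\{f_1,\dots,f_{N-1},*\}$ of the statement. The normalization simplifies to $\Upsilon=df_1\wedge\cdots\wedge df_N(\partial_1,\dots,\partial_N)=\det(\partial f_j/\partial x^k)=\{f_1,\dots,f_{N-1},f_N\}$, so assembling the pieces yields $\textbf{v}=-\lambda_N\{f_1,\dots,f_{N-1},*\}/\{f_1,\dots,f_{N-1},f_N\}$, which I denote by $\lambda\{f_1,\dots,f_{N-1},*\}$.

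For the second equation in \eqref{C0}, substituting $\alpha_{jk}=df_j(\partial_k)$ into the right-hand side of \eqref{L2} already produces the desired shape, so only the coefficients $\Lambda_j$ for $j=1,\dots,N-1$ remain to be identified. Formula \eqref{02} gives $\Lambda=A\tilde\lambda$ with $\tilde\lambda=(0,\dots,0,\lambda_N)^T$ since every constraint multiplier is zero; hence only the $N$th column of $A$ contributes, and the skew-symmetry $a_{jN}=-a_{Nj}$ yields $\Lambda_j=a_{jN}\lambda_N=-a_{Nj}\lambda_N$. Re-expressing $\lambda_N$ in terms of $\lambda$ via the relation established above converts this into the coefficient $\lambda\,a_{Nj}$ multiplying each $df_j(\partial_k)$, as required. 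The only delicate point, and the main obstacle, is the sign accounting that couples the cofactor expansion of \eqref{00} (yielding $-\lambda_N$) with the skew-symmetry sign in $a_{jN}=-a_{Nj}$ and the scalar conversion between $\lambda_N$ and $\lambda$; once those signs are tracked consistently the corollary follows at once from Theorem~\ref{main}.
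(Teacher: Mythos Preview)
Your approach is essentially the paper's: cofactor-expand \eqref{00} along the last column to obtain the first line of \eqref{C0}, and read off $\Lambda_j$ from $\Lambda=A\tilde\lambda$ with $\tilde\lambda=(0,\dots,0,\lambda_N)^T$ for the second. One point you omit but the paper makes explicit is the side condition in \eqref{L0}: with $M=N-1$ one must check $\Lambda_N=0$, and this holds identically because $\Lambda_N=a_{NN}\lambda_N=0$ by skew-symmetry of $A$; this is why no extra partial differential equation of type \eqref{C3} survives in the present setting, and it is worth stating since it is precisely what distinguishes this corollary from the general theorem.
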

\begin{corollary}\label{main0}
Differential equations \eqref{L2} are Lagrangian  with Lagrangian function
$ L=\dfrac{1}{2}||\dot{\textbf{x}}-\textbf{v}(x)||^2,$ where $ \textbf{v} $ is Cartesian vector field.
\end{corollary}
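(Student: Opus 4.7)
The plan is to expand the Lagrangian $L=\dfrac{1}{2}||\dot{\textbf{x}}-\textbf{v}(x)||^2$ using the Riemann metric, so that it splits into three recognizable pieces:
\[
L = T - \sigma(\dot{\textbf{x}}) + \dfrac{1}{2}||\textbf{v}||^2,
\]
where $T=\dfrac{1}{2}\sum G_{jk}\dot x^j\dot x^k$ is the kinetic energy and $\sigma=\sum_k p_k\,dx^k$, with $p_k=\sum_j G_{kj}v^j$, is precisely the 1-form attached to $\textbf{v}$ in the statement of Theorem~\ref{main}. The third summand depends only on $x$, so each block contributes cleanly to the Euler--Lagrange operator.

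Next, I would compute the Euler--Lagrange equations of $L$ term by term. The $T$-piece reproduces the left-hand side of \eqref{L2}; the $x$-only piece $\tfrac{1}{2}||\textbf{v}||^2$ contributes $-\partial_k\tfrac{1}{2}||\textbf{v}||^2$; and the linear-in-$\dot{\textbf{x}}$ piece $-\sigma(\dot{\textbf{x}})=-\sum_k p_k\dot x^k$ gives
\[
-\dfrac{d p_k}{dt}+\sum_j\dfrac{\partial p_j}{\partial x^k}\dot x^j
=\sum_j\Bigl(\dfrac{\partial p_j}{\partial x^k}-\dfrac{\partial p_k}{\partial x^j}\Bigr)\dot x^j
= -\iota_{\dot{\textbf{x}}}d\sigma(\partial_k),
\]
since $d\sigma(\partial_j,\partial_k)=\partial_j p_k-\partial_k p_j$. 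Assembling the three contributions, the Euler--Lagrange equation of $L$ reads
\[
\dfrac{d}{dt}\dfrac{\partial T}{\partial\dot x^k}-\dfrac{\partial T}{\partial x^k}
=\dfrac{\partial\tfrac{1}{2}||\textbf{v}||^2}{\partial x^k}+\iota_{\dot{\textbf{x}}}d\sigma(\partial_k).
\]

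To finish, I would restrict to the invariant relation $\dot{\textbf{x}}=\textbf{v}(x)$ supplied by Theorem~\ref{main}. On this relation, hypothesis (iv) of the theorem yields
\[
\iota_{\textbf{v}}d\sigma(\partial_k)=\sum_{j=1}^N\Lambda_j\Omega_j(\partial_k)=\sum_{j=1}^N\Lambda_j\alpha_{jk},
\]
and the Cartesian side conditions $\Lambda_j=0$ for $j=M+1,\dots,N$ built into \eqref{L0} truncate the sum to $j=1,\dots,M$. This recovers \eqref{L2} exactly, establishing that \eqref{L2} are the Euler--Lagrange equations of the Lagrangian $L$.

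The only subtle point, rather than a true computational obstacle, is conceptual: the identification of the Euler--Lagrange right-hand side with the reactive-force expression $\sum_{j=1}^M\Lambda_j\alpha_{jk}$ holds along the invariant manifold $\dot{\textbf{x}}=\textbf{v}$; off this manifold the Euler--Lagrange right-hand side is the full contraction $\iota_{\dot{\textbf{x}}}d\sigma(\partial_k)$. It is precisely the vanishing of the extra components $\Lambda_{M+1},\ldots,\Lambda_N$ imposed by the Cartesian system \eqref{L0} that prunes this contraction down to the constraint covectors $\Omega_1,\ldots,\Omega_M$, making the Lagrangian description consistent with the reactive forces of the nonholonomic system.
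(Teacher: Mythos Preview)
Your proposal is correct and follows essentially the same route as the paper: expand $L=\tfrac{1}{2}\|\dot{\textbf{x}}-\textbf{v}\|^2$ as $T-\sigma(\dot{\textbf{x}})+\tfrac{1}{2}\|\textbf{v}\|^2$, compute the Euler--Lagrange equations, and then restrict to the invariant relation $\dot{\textbf{x}}=\textbf{v}$ together with the Cartesian conditions $\Lambda_{M+1}=\cdots=\Lambda_N=0$. The paper carries out this same computation inside the proof of Theorem~\ref{main} (and then simply invokes it via \eqref{S11}), phrasing the middle step in the language of the Levi--Civita connection---writing $\nabla_{\dot{\textbf{x}}}(\dot{\textbf{x}}-\textbf{v})=0$ and splitting off a term $\nabla_{\dot{\textbf{x}}-\textbf{v}}p_k$ that vanishes on the invariant relation---whereas you work directly with the coordinate Euler--Lagrange operator and obtain the single contraction $\iota_{\dot{\textbf{x}}}d\sigma(\partial_k)$; these are the same identity in different dress.
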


The proofs of Theorem \ref{main} , Corollary \ref{main1},\,
Corollary \ref{main2},\, and Corollary \ref{main0} are given in
section \ref{s2}.

\section{Proof of the main results}\label{s2}
 \begin{proof}[Proof of Theorem \ref{main}]
  Firstly we shall introduce the following notation and concepts.
Let  $\xi\in (\textsc{Q})$ be  the Lie algebra of vector fields on ${\textsc{Q}}$ and let $\nabla$ be the connection \[ \begin{array}{cc}
\nabla:\xi (\textsc{Q})&\times\xi (\textsc{Q})\longmapsto \xi (\textsc{Q}),\\
&(\textbf{u},\textbf{v})\longmapsto \nabla_\textbf{u
v},\end{array}\] which is $\mathbb{R}$ lineal with respect to
$\textbf{v}$ and $C^{\infty}$ lineal with respect to $\textbf{u}$
and is compatible with metric $G,$ i.e.
$\nabla_\textbf{u}{G}(\textbf{v},\textbf{w})=0,\quad
\forall{{\textbf{u},\textbf{v},\textbf{w}}}\in\xi (\textsc{Q}).$

Let ${\tilde{\textbf{v}}}\in\xi (\textsc{Q})$ be a vector field:
\[
 \tilde{\textbf{v}}=\frac{1}{\Upsilon}\left|
\begin{array}{ccccc}
\Omega_1(\partial_1)&\Omega_1(\partial_2)&\hdots  &
\Omega_1(\partial_N)&\lambda_1\\
\vdots          & \vdots         & \hdots & \vdots&\vdots          \\
\Omega_M(\partial_1)&\Omega_M(\partial_2)&\hdots  &
\Omega_M(\partial_N)
&\lambda_2\\
\Omega_{M+1}(\partial_1)&\Omega_{M+1}(\partial_2)&\hdots
&\Omega_{M+1}(\partial_N)&\lambda_{M+1}\\
\vdots          & \vdots         & \hdots & \vdots &\vdots         \\
\Omega_{N}(\partial_1)
&\Omega_{N}(\partial_N)&\lambda_N\\
\partial_1&\partial_2&\hdots  & \partial_N&0\end{array}
         \right|
\]
where
$\Upsilon\equiv{\Omega_1\wedge{\Omega_2}...\wedge{\Omega_N}(\partial_1,\partial_2,...,
\partial_N)}\ne{0}.$

   The functions $\lambda_j$ for $j=1,\ldots N,$ are arbitrary functions on $\textsc{Q}$ such that $\Omega_j( {\tilde{\textbf{v}}} )=\lambda_j,$ for $j=1,\ldots N.$

 Let $\tilde{\sigma}$ be the 1-form associated with the vector
field $\tilde{\textbf{v}},$ i.e.
$$\tilde{\sigma}=(\tilde{\textbf{v}}(x),dx)=\sum_{j,k=1}^NG_{jk}\tilde{v}^j(x)dx^k\equiv{
\sum_{k=1}^N\tilde{p}_kdx^k},$$ then, in view of the condition
$\Upsilon\ne 0$ the 2-form $d\tilde{\sigma}$ admits the
development
$d\tilde{\sigma}=\dfrac{1}{2}\displaystyle\sum_{j,k=1}^N\tilde{a}_{jk}(x)
\Omega_j\wedge{\Omega_k},$ where $A=(\tilde{a}_{jk})$ is a
$N\times N$ skew-symmetric matrix such that
$$
\tilde{a}_{jk}=(-1)^{j+k-1}\frac{1}{\Upsilon}d\sigma\wedge
\Omega_1\wedge...\wedge\widehat{\Omega}_{k}
....\wedge\widehat{\Omega}_{j}....
\wedge\Omega_{N}(\partial_1,\partial_2,...,\partial_N),
$$
 $\widehat{\Omega}_j,\, \widehat{\Omega}_k,$ means that these elements are
omitted.

\smallskip

In view of the relations
\[
\iota_{\tilde{\textbf{v}}}d\tilde{\sigma}=\dfrac{1}{2}\displaystyle\sum_{k,j=1}^N\left(\tilde{a}_{jk}\Omega_j(\tilde{\textbf{v}})
- \tilde{a}_{kj}\Omega_j(\tilde{\textbf{v}})\right)\Omega_k=\displaystyle\sum_{k,j=1}^N\tilde{a}_{jk}\Omega_j(\tilde{\textbf{v}})\Omega_k,
 =\displaystyle\sum_{k,j=1}^N\tilde{a}_{jk}\lambda_j\Omega_k
\]
we obtain  that the contraction of $d\tilde{\sigma}$ along $\tilde{\textbf{v}}$ is
\begin{equation}\label{31}
\iota_{\tilde{\textbf{v}}}d\tilde{\sigma}=\sum_{j=1}^N \tilde{\Lambda}_j\Omega_j,
\end{equation}
where
${\tilde{\Lambda}}=\left(\tilde{\Lambda}_1,\tilde{\Lambda}_2,...,\tilde{\Lambda}_N\right)^T.$
It is easy to check that this
 vector can be calculated as follows
\begin{equation}\label{2}
{\tilde{\Lambda}}=A\tilde{\lambda}=M^{-1}\tilde{\tau},\,
\end{equation}
where $M=(\Omega_j( \partial_k))$  and
$\tilde{\lambda}=\left(\lambda_1,\ldots,\lambda_N\right)^T,\,
\tilde{\tau}=\left(\iota_{\tilde{\textbf{v}}}d\tilde{\sigma}(\partial_1)\,,\ldots,\iota_{\tilde{\textbf{v}}}d\tilde{\sigma}(\partial_N)\right)^T.$
\smallskip

Now we prove that the differential system
\begin{equation}
\label{32}\dot{\textbf{x}}={\tilde{\textbf{v}}}(x),\quad  x \in\textsc{Q},
\end{equation}
is invariant relationship of the Lagrangian equations with
Lagrangian function
$$
\tilde{L}=\dfrac{1}{2}||\dot{\textbf{x}}-{\tilde{\textbf{v}}}||^2=
\frac{1}{2}\sum_{j,k=1}^NG_{kj}(\dot{x}^j-\tilde{v}^j)(\dot{x}^k-\tilde{v}^k).
$$

 Indeed after covariant differentiation we obtain
 $\nabla_{\dot{\textbf{x}}}(\dot{\textbf{x}}-{\tilde{\textbf{v}}}) =0,$
 or, what is the same
 $$
 \nabla_{\dot{\textbf{x}}}G\left(\dot{\textbf{x}}-\tilde{\textbf{v}}\right) =\nabla_{\dot{\textbf{x}}}\left(\displaystyle\frac{\partial
\tilde{L}}{\partial\dot{\textbf{x}}}\right)=0,
 $$
 hence , by considering that
 \[ \begin{array}{ll}
 \displaystyle\nabla_{\dot{\textbf{x}}}\left(G\dot{\textbf{x}}\right)=&\displaystyle\frac{d}{dt}\displaystyle\frac{\partial
T}{\partial\dot{\textbf{x}}}-\displaystyle\frac{\partial
T}{\partial{\textbf{x}}},\quad T=\frac{1}{2}||\dot{\textbf{x}}||^2 \vspace{0.20cm}\\
  \displaystyle\nabla_{\dot{\textbf{x}}}\tilde{p}_j=& \displaystyle{\sum_{j=1}^N}\dot{x}^j\left(\nabla_{\partial_j}\tilde{p}_k-\nabla_{\partial_k}\tilde{p}_j\right)+\displaystyle{\sum_{j=1}^N}\dot{x}^j\nabla_{\partial_k}\tilde{p}_j\vspace{0.20cm}\\
 =& \displaystyle{\sum_{j=1}^N}\dot{x}^j\left({\partial_j}\tilde{p}_k-{\partial_k}\tilde{p}_j\right)+\displaystyle{\sum_{j=1}^N}v^j\nabla_{\partial_k}\tilde{p}_j+
\displaystyle{\sum_{j=1}^N}\left(\dot{x}^j-\tilde{v}^j\right)\nabla_{\partial_k}\tilde{p}_j\vspace{0.20cm}\\
 =&\displaystyle\frac{d}{dt}\displaystyle\frac{\partial
V}{\partial\dot{x}^k}-\displaystyle\frac{\partial
V}{\partial{x}^k} +\sum_{j=1}^N\left(\dot{x}^j-\tilde{v}^j\right)\nabla_{\partial_k}\tilde{p}_j
\end{array}\]

where $\tilde{p}_k=\displaystyle{\sum_{j=1}^N}G_{kj}{\tilde{v}}^j,\,V=(\dot{\textbf{x}},\tilde{\textbf{v}})-\frac{1}{2}||\tilde{\textbf{v}}||^2,$
and $(\,\,,\,\,)$ is the scalar product. Then along the solutions of \eqref{32} we  give
 $
  \displaystyle\frac{d}{dt}\displaystyle\frac{\partial
\tilde{L}}{\partial\dot{x}^k}-\displaystyle\frac{\partial
\tilde{L}}{\partial{x}^k}=0,
$
 for $k=1,2,\ldots N.$ It is easy to show that these Lagrangian equations admits the
representation
\[
\displaystyle\frac{d}{dt}\displaystyle\frac{\partial
T}{\partial\dot{x}^k}-\displaystyle\frac{\partial
T}{\partial{x}^k} = \tilde{\omega} (\partial_k)+
\nabla_{\dot{\textbf{x}}-\tilde{\textbf{v}}}\tilde{p}_k,
\]
where
$\tilde{\omega}=\frac{1}{2}d||{\tilde{\textbf{v}}}||^2+\iota_{\tilde{\textbf{v}}}d\tilde{\sigma}.$
  In view of \eqref{31} and \eqref{32} we finally deduce the differential equations
  \begin{equation}\label{3}
\displaystyle\frac{d}{dt}\displaystyle\frac{\partial
T}{\partial\dot{x}^k}-\displaystyle\frac{\partial
T}{\partial{x}^k} =\displaystyle\frac{\partial
\dfrac{1}{2}||\tilde{\textbf{v}}||^2}{\partial{x}^k} +\sum_{j=1}^N
\tilde{\Lambda}_j\Omega_j(\partial_k).
\end{equation}

  If we determine the vector field $ \textbf{v}$ and functions $ \Lambda_1,\,\Lambda_2,\ldots \Lambda_N$ as follows
  \begin{equation}\label{S11}
   \textbf{v}= \tilde{\textbf{v}}|_{ \lambda_1=\lambda_2=\ldots=\lambda_M=0},\quad \Lambda_j=\tilde{\Lambda}_j|_{ \lambda_1=\lambda_2=\ldots=\lambda_M=0},\quad j=1,2,\ldots N\end{equation}
   and require that $\Lambda_j=0,\quad j=M+1,...,N,$
  then we obtain the Cartesian vector field and differential system \eqref{3} coincide with system \eqref{L2}.
In short Theorem \ref{main} is proved.\end{proof}
 \begin{proof}[Proof of Corollary \ref{main1}]
The vector field \eqref{00} in this case takes the form
\[
\begin{array}{cc}
\Upsilon \textbf{v}&= \left|
\begin{array}{ccccc}
\Omega_1(\partial_x)&\Omega_1(\partial_y)&\Omega_1(\partial_z)&0\\
   \Omega_2(\partial_x)&\Omega_2(\partial_y)&\Omega_2(\partial_z)&\lambda_2    \\
\Omega_3(\partial_x)&\Omega_3(\partial_y)&\Omega_3(\partial_z)&\lambda_3\\
\partial_x&\partial_y& \partial_z&0\end{array}
         \right|\vspace{0.20cm}\\
&=\lambda_2\left|
\begin{array}{ccccc}
\Omega_1(\partial_x)&\Omega_1(\partial_y)&\Omega_1(\partial_z)\\
   \Omega_3(\partial_x)&\Omega_3(\partial_y)&\Omega_3(\partial_z)\\
\partial_x&\partial_y& \partial_z\end{array}
         \right|\vspace{0.20cm}\\
&-\lambda_3\left|
\begin{array}{ccccc}
\Omega_1(\partial_x)&\Omega_1(\partial_y)&\Omega_1(\partial_z)\\
   \Omega_2(\partial_x)&\Omega_2(\partial_y)&\Omega_2(\partial_z)\\
\partial_x&\partial_y& \partial_z\end{array}
         \right|\vspace{0.20cm}\\
&=\left|
\begin{array}{ccccc}
a_1&a_2&a_3\\
   w_1&w_2&w_3   \\
\partial_x&\partial_y& \partial_z\end{array}
         \right|,
\end{array}
\]
thus $\textbf{v}(x)=[\textbf{a}\times \textbf{w}].$

\smallskip

 On the other hand  considering that
\[ d\sigma=(\mbox{rot}\textbf{v})_xdz\wedge dy+(\mbox{rot}\textbf{v})_ydx\wedge dz+(\mbox{rot}\textbf{v})_zdy\wedge dx\]
where
$(\mbox{rot}\textbf{v})_x=dx(\mbox{rot}\textbf{v}),\,\,(\mbox{rot}\textbf{v})_y=dy(\mbox{rot}\textbf{v}),\,\,(\mbox{rot}\textbf{v})_z=
dz(\mbox{rot}\textbf{v}),$ we get
\[ \imath_{ \textbf{v} }d\sigma=\left([\textbf{v}\times \mbox{rot}\textbf{v}],d\textbf{x}\right)=\Lambda_1\Omega_1+\Lambda_2\Omega_2+\Lambda_3\Omega_3\]
which lead to
\[
 \begin{array}{cc}
\Upsilon\Lambda_1&=\Omega_2\wedge\Omega_3(\textbf{v},\,\mbox{rot}\textbf{v})=\lambda_2\Omega_3(\mbox{rot}\textbf{v})-\lambda_3\Omega_2(\mbox{rot}\textbf{v})=
\vspace{0.20cm}\\
&=\left(\textbf{w},\,\mbox{rot}\textbf{v}\right),\vspace{0.20cm}\\
 \Upsilon\Lambda_2&=\Omega_3\wedge\Omega_1(\textbf{v},\,\mbox{rot}\textbf{v})=\lambda_3\Omega_1(\mbox{rot}\textbf{v})-\lambda_1\Omega_3(\mbox{rot}\textbf{v}),
\vspace{0.20cm}\\
&=\lambda_3\Omega_1(\mbox{rot}\textbf{v})=\lambda_3\left(\textbf{a},\mbox{rot}\textbf{v}\right),
\vspace{0.20cm}\\
\Upsilon\Lambda_3&=-\Omega_2\wedge\Omega_1(\textbf{v},\,\mbox{rot}\textbf{v})=-\lambda_2\Omega_1(\mbox{rot}\textbf{v})+\lambda_1\Omega_2(\mbox{rot}\textbf{v}),
\vspace{0.20cm}\\
&=-\lambda_2\Omega_1(\mbox{rot}\textbf{v})=-\lambda_2\left(\textbf{a},\mbox{rot}\textbf{v}\right),
\end{array}
\]
here we put $ \Omega_1(\textbf{v})=\lambda_1=0.$

From the conditions $ \Lambda_2=\Lambda_3=0$ we obtain
\[\begin{array}{ccc}
  \left(\textbf{a},\mbox{rot}\textbf{v}\right)=
  \left(\textbf{a},\mbox{rot}[\textbf{a}\times \textbf{w}]\right)=0,\vspace{0.20cm}\\
\end{array}
\]
hence we easily deduce differential system  \eqref{L1} and
\eqref{LLL1}.

 \smallskip

In short Corollary \ref{main1} is proved.
\end{proof}

\begin{proof}[Proof of Corollary \ref{main2}]
In this case we obtain that the vector field $\textbf{v}$ takes the form
\[
\begin{array}{lll}
 \textbf{v}=\displaystyle{\frac{1}{\Upsilon}}\left|
 \begin{array}{ccccc}
df_1(\partial_1)&\hdots  &
df_1(\partial_N)&0\\
\vdots                & \hdots & \vdots&\vdots          \\
df_{N-1}(\partial_1)&\hdots
&df_{N-1}(\partial_N)&0\\
df_{N}(\partial_1)&\hdots
&df_{N}(\partial_N)&\lambda_N\\
\partial_1&\hdots  & \partial_N&0\end{array}
         \right|=-\displaystyle{\frac{\lambda_N}{\Upsilon}} \left|
\begin{array}{ccccc}
 df_1(\partial_1)& \ldots& df_1 (\partial_N) \\
  \vdots & & \vdots \\
   df_{N-1}(\partial_1)
& \ldots & df_{N-1}(\partial_N)\\
\partial_1 &  \ldots & \partial_N \end{array}
         \right|\vspace{0.20cm}\\
=-\lambda_N\dfrac{\{f_1,\,f_2\ldots,f_{N-1},\,*\}}{\{f_1,\,f_2\ldots,f_{N-1},\,f_N\}}=\lambda\{f_1,\,f_2\ldots,f_{N-1},\,*\}.
         \end{array}
\]

On the other hand by considering that $\lambda_j=0$ for $j=1,2,\ldots N-1,$ from \eqref{02} follows that
$
\Lambda_j=a_{Nj}\lambda_N,\quad j=1,\ldots, N-1,\quad \Lambda_N=a_{NN}\lambda_N=0.
$

Clearly the last equation is satisfied identically in view that $(a_{jk})$ are elements of the skew-symmetric matrix $A.$

\smallskip

Therefore we easily deduced the differential equations generated from Cartesian and Lagrangian approach.

Thus Corollary \ref{main2} has been proven.
\end{proof}

 \begin{proof}[Proof of Corollary \ref{main0}]
Follows from the proof of Theorem \ref{main} by considering
\eqref{S11}
\end{proof}
\smallskip

 \section{ Decartes approach for non-holonomic system with three degree of freedom  and
one constraints .}

In this section we apply the corollary \ref{main1} to study the Chapliguin-Catatheodory sleight and  Suslov's problem for the rigid body around a fixed point.

{\it Chapliguin-Carathodory's sleigh}

\smallskip
We shall now analyze one of the classical nonholonomic
systems  Chapliguin-Carathodory's sleigh (which we call a sleigh). \cite{NF}

\smallskip

 The idealized sleigh is a body that has three points of contact with the plane.
Two of them slide freely but the third, $A,$ behaves like a knife
edge subjected to a constraining force $ \textbf{R}$ which does not
allow transversal velocity.  More precisely, let $yoz$ be an
inertial frame and $\xi\,A\eta$ a frame moving with the sleigh.
Take as generalized coordinates the Decartes coordinates of the
center of mass $C$ of the sleigh and the angle $x$ between the $y$
and the $\xi$ axis. The reaction force $\textbf{R}$  against the
runners is exerted laterally at the point of application $A$ in
such a way that the $\eta$ component of the
 velocity is zero. Hence, one has the constrained system
 with the configuration space
$\textsc{Q}=S^1\times\mathbb{R}^2,$ with the kinetic energy
$T=\frac{m}{2}(\dot{y}^2+\dot{z}^2)+\frac{I_c}{2}\dot{x}^2,$ and
with the constraint
$\epsilon\dot{x}+\sin{x}\dot{y}-\cos{x}\dot{z}=0,$ where $m$ is
the mass of the system and $I_c$ is the moment of inertia about a
vertical axis through $C$ and $\epsilon=|AC|.$

\smallskip

Observe that the "javelin" (or arrow or Chapliguin's skate) is a particular case of
a sleigh and can be obtained when $\epsilon=0.$

\smallskip

To apply the Decartes approach for this system, first we
introduce the 1-form $\Omega_j,$ for $j=1,2,3$ in such a
way that the determinant $\Upsilon\ne{0}.$
 This condition holds in particular if
$$\Omega_1=\epsilon d{x}+\sin{x}d{y}-\cos{x}d{z},\quad
\Omega_2= \sin{x}d{z}+\cos{x}d{y},\quad
        \Omega_3=dx,$$
so  $\Upsilon=\Omega_1\wedge\Omega_2\wedge\Omega_3(\partial_x,\,\partial_y,\,\partial_z)=1.$

\smallskip

The Descartes  approach  produce the differential equations (see formula \eqref{L1})
\cite{Sad}

\smallskip

\begin{equation}\label{ch1}
\dot{x}=\lambda_3,\quad\dot{y}=\lambda_2\cos{x}-\epsilon \lambda_3\sin{x},\quad\dot{z}=\lambda_2\sin{x}+\epsilon\lambda_3\cos{x},
\end{equation}
 here $\lambda_j=\lambda_3(x,y,z,\epsilon )$ for $j=2,3$ are solutions
of the  equation
\begin{equation}\label{cch2}
  \sin x(J\partial_z\lambda_3+\epsilon
m\partial_y\lambda_2)+\cos x(J\partial_y\lambda_3-\epsilon
m\partial_z\lambda_2)-m(\partial_x\lambda_2-\epsilon\lambda_3)=0,
\end{equation}
where $J=J_C+\epsilon^2m,\quad
||\textbf{v}||^2=J\lambda^2_3+m\lambda^2_2.$

\smallskip

Now we show that there are solutions of \eqref{ch1} and
\eqref{cch2} fully describes the inertial movements of the sleigh.

\smallskip

\begin{corollary}
 All the inertial trajectories of  Chapliguin -Carathodory's sleigh can be obtained from Cartesian approach.
\end{corollary}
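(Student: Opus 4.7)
The strategy is to exhibit a family of solutions of the compatibility PDE \eqref{cch2} that is rich enough to recover every inertial trajectory of the sleigh. Because the inertial sleigh is invariant under translations in $y$ and $z$, I would restrict attention to the reduced ansatz $\lambda_2=\lambda_2(x)$, $\lambda_3=\lambda_3(x)$. Under this ansatz every partial derivative in $y$ and $z$ drops from \eqref{cch2} and the equation collapses to the single ordinary differential equation $\lambda_2'(x)=\epsilon\,\lambda_3(x)$.

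By Theorem \ref{main} the Cartesian system \eqref{ch1} generates a Lagrangian flow for the sleigh with potential $U=\tfrac12\|\mathbf{v}\|^2+U_0$; to hit the inertial (force-free) trajectories it therefore suffices to require that $\|\mathbf{v}\|^2=J\lambda_3^2+m\lambda_2^2$ be constant on $\textsc{Q}$. Differentiating $J\lambda_3^2(x)+m\lambda_2^2(x)=\text{const}$ in $x$ and substituting $\lambda_2'=\epsilon\lambda_3$ produces the second equation $J\lambda_3'(x)+m\epsilon\,\lambda_2(x)=0$. The resulting constant-coefficient $2\times 2$ linear system is solved explicitly by $\lambda_2(x)=A\cos(kx+\varphi)$ and $\lambda_3(x)=-\tfrac{Ak}{\epsilon}\sin(kx+\varphi)$, with $k=\epsilon\sqrt{m/J}$ and two free constants $A,\varphi$.

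These Cartesian solutions then depend on the three initial coordinates $(x_0,y_0,z_0)$ together with the two constants $(A,\varphi)$, i.e.\ on $3+2=5=2N-M$ parameters, which is precisely the dimension of the manifold of inertial trajectories of the sleigh. To match a prescribed inertial trajectory with data $(x_0,y_0,z_0,\dot x_0,\dot y_0,\dot z_0)$ satisfying the constraint, set $u_0:=\dot y_0\cos x_0+\dot z_0\sin x_0$ and solve the algebraic pair $A\cos(kx_0+\varphi)=u_0$ and $-\tfrac{Ak}{\epsilon}\sin(kx_0+\varphi)=\dot x_0$ for $(A,\varphi)$; one finds $A^2=u_0^2+(J/m)\dot x_0^2$ (proportional to twice the kinetic energy divided by $m$) and $\varphi$ determined mod $2\pi$ by the phase. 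The main obstacle is the identification of "$\|\mathbf{v}\|^2=\text{const}$ on $\textsc{Q}$" as exactly the supplementary condition singling out inertial motion within the Cartesian family and the verification that this supplementary requirement is compatible with \eqref{cch2} rather than overdetermining it; once the right ansatz is found the problem reduces to a two-dimensional linear ODE carrying the exact number of parameters needed to cover every initial velocity datum compatible with the constraint $\epsilon\dot x+\sin x\,\dot y-\cos x\,\dot z=0$.
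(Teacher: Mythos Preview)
Your argument is correct and follows essentially the same route as the paper: both restrict to the ansatz $\lambda_2=\lambda_2(x)$, $\lambda_3=\lambda_3(x)$, reduce \eqref{cch2} to $\lambda_2'=\epsilon\lambda_3$, and arrive at the same two-parameter trigonometric family $\lambda_2=C\sin(q\epsilon x+\text{phase})$, $\lambda_3=Cq\cos(q\epsilon x+\text{phase})$ with $q=\sqrt{m/J}$. The only organizational difference is that the paper first integrates the classical d'Alembert--Lagrange equations and then reads off $\lambda_2,\lambda_3$ by comparison, whereas you impose $\|\mathbf v\|^2=\text{const}$ from the outset to obtain the second ODE $J\lambda_3'+m\epsilon\lambda_2=0$ and then match initial data; both yield the same Cartesian vector field covering every inertial trajectory.
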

\begin{proof}

Let us suppose that $\lambda_j=\lambda_j(x,\epsilon)$ for $j=2,3.$
Clearly that in this case \eqref{cch2} takes the form
$\partial_x\lambda_2-\epsilon\lambda_3=0$ and  all paths of the
equation \eqref{ch1} can be obtained from the formula
\[
\begin{array}{ll}
y=y_0+\displaystyle\int\dfrac{(\lambda_2(x,\epsilon )\cos{x}-
\epsilon \lambda_3\sin{x})dx}{\lambda_3(x,\epsilon )},\vspace{0.20cm}\\
{z}=z_0-\displaystyle\int\dfrac{(\lambda_2(x,y,z,\epsilon
)\sin{x}-\epsilon\lambda_3\cos{x})dx}{\lambda_3(x,\epsilon )},\vspace{0.20cm}\\
t=t_0+\displaystyle\int\dfrac{dx}{\lambda_3(x,\epsilon )}.
\end{array}%
\]

 On the other hand, the inertial motions of the sleigh can be obtained from the equations deduce from the classical approach
\[
J_C\ddot{x}=\epsilon\mu ,\quad m\ddot{y}=\mu\sin x,\quad m\ddot{z}=-\mu\cos x,\quad \epsilon\dot{x}+\sin x\dot{y}-\cos x\dot{z}=0.
\]
 Hence, after straightforward calculations we get the first order ordinary differential equations
\[
\dot{x}=qC\cos\theta,\,\dot{y}=C(\sin\theta\cos
x-q\epsilon\cos\theta\sin x),\,\dot{z}=C(\sin\theta\sin
x+q\epsilon\cos\theta\cos x),
\] where $ \theta=q\epsilon x+C,\quad q=\sqrt{\displaystyle\frac{m}{J}}$

\smallskip

 This system can be obtained from  \eqref{ch1} if we choose
$$\lambda_2=C\sin\theta,\quad \lambda_3=Cq\cos\theta.$$
Is clear that in this case
$$2||\textbf{v}||^2=J\lambda^2_3(x,\epsilon\,)+m\lambda^2_2(x,\epsilon\,)={mC^2},$$ therefore the sleigh moves by inertia. So the corollary is proved.
\end{proof}

\smallskip

Now we study  Chapliguin's skate.  Cartesian approach in this case produce the differential equations, which can be obtained from \eqref{ch1}
and \eqref{cch2} by putting $ \epsilon =0.$
\begin{equation} \label{ch22}
\begin{array}{ll}
\dot{x}=&\lambda_3(x,y,z,0 ),\quad \dot{y}=\lambda_2(x,y,z,0 )\cos{x},\quad\dot{z}=\lambda_2(x,y,z,0 )\sin{x},\vspace{0.20cm}\\
 &J(\sin
x\partial_z\lambda_3+\cos
x\partial_y\lambda_3)-m\partial_x\lambda_2=0
\end{array}%
\end{equation}
\begin{corollary}
All the trajectories of Chapliguin's skate with the initial
condition $ \dot{x}(t_0)=C_0\ne 0$ and under the action of the
potential field of force with potential function $U=mgy$ can be
obtained from the Cartesian approach.
\end{corollary}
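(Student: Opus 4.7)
The plan is to produce an explicit ansatz for $(\lambda_2,\lambda_3)$ whose Cartesian flow recovers every trajectory of Chapliguin's skate under $U=mgy$ with nonzero initial angular velocity. Because $U=mgy$ is independent of $x$ and $z$, and because in the skate case ($\epsilon=0$) the reaction $1$-form $\Omega_1=\sin x\,dy-\cos x\,dz$ has no $dx$ component, the classical equations \eqref{L21} immediately give $J_C\ddot x=0$, so $\dot x\equiv C_0$ along every trajectory. This motivates the ansatz $\lambda_3(x,y,z,0)\equiv C_0$. By Corollary \ref{main0}, matching the effective potential $\tfrac12\|\textbf{v}\|^2+U_0$ of the Cartesian system with the physical potential $U=mgy$ forces
\[ \tfrac12\bigl(JC_0^2+m\lambda_2^2\bigr)=mgy+U_0, \]
so $\lambda_2=\sqrt{2gy+c}$ for a free constant $c$. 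Since $\lambda_3$ is constant and $\lambda_2$ depends only on $y$, the compatibility PDE in \eqref{ch22} is satisfied trivially.

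Next I would integrate the Cartesian system, obtaining $x(t)=C_0t+x_0$ and $\tfrac{d}{dt}\sqrt{2gy(t)+c}=g\cos(C_0t+x_0)$, which yields $\sqrt{2gy(t)+c}=\tfrac{g}{C_0}\sin(C_0t+x_0)+K$ for some integration constant $K$, together with $\dot y=\sqrt{2gy+c}\cos x$ and $\dot z=\sqrt{2gy+c}\sin x$. The same expression emerges from the classical equations after eliminating the Lagrange multiplier: introducing the tangential velocity $v:=\dot y\cos x+\dot z\sin x$ (well defined on the constraint surface, with $\dot y=v\cos x$ and $\dot z=v\sin x$), the two translation equations of \eqref{L21} combine to $\dot v=g\cos x$, which with $\dot x=C_0$ integrates to exactly the same formula. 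So the Cartesian and classical trajectories coincide.

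To conclude I would perform a dimension count: the constrained classical phase space has dimension $5$, and the Cartesian family is parametrized by $(C_0,c)$ together with the three initial coordinates $(x_0,y_0,z_0)$, again $5$ parameters. Invertibility of the map to admissible initial data is immediate ($C_0=\dot x(t_0)$ and $c=v(t_0)^2-2gy_0$). The principal obstacle I anticipate is carrying out the classical reduction cleanly enough to see that its one-parameter family of trajectories indexed by $K$ matches the Cartesian one-parameter family indexed by $c$; the square-root ansatz must also be justified, but this is automatic since $v(t)^2=2gy(t)+c$ along the flow forces $2gy+c\ge 0$.
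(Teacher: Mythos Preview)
Your argument is correct and takes a genuinely different route from the paper's. The paper first integrates the classical equations to obtain the vector field $\textbf{b}=\bigl(C_0,\,(\tfrac{g\sin x}{C_0}+C_1)\cos x,\,(\tfrac{g\sin x}{C_0}+C_1)\sin x\bigr)$, observes that $\textbf{b}$ itself is \emph{not} Cartesian since $(\textbf{a},\mbox{rot}\,\textbf{b})=\tfrac{g\cos x}{C_0}\ne 0$, and then exhibits a reparametrizing factor $\kappa$ (a function of $x$ alone) so that $\kappa\textbf{b}$ satisfies the PDE in \eqref{ch22}; in other words, the paper proves only that $\textbf{b}$ is \emph{Cartesian equivalent}, with $\lambda_2=C_0$ and $\lambda_3=\kappa$. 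You instead produce a \emph{genuinely} Cartesian field by taking $\lambda_3=C_0$ and $\lambda_2=\sqrt{2gy+c}$: the PDE is satisfied trivially, and the potential $\tfrac12\|\textbf{v}\|^2$ equals $mgy$ up to an additive constant, so the Lagrangian system \eqref{L2} literally coincides with the classical one. The underlying reason the two approaches differ is that the tangential velocity $v$ can be written either as a function of $x$ along each trajectory ($v=\tfrac{g\sin x}{C_0}+C_1$, the paper's choice) or as a function of $y$ via the first integral $v^2=2gy+c$ (your choice).

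What each approach buys: yours is more direct and avoids the reparametrization step entirely, and it makes the match with the physical potential $U=mgy$ transparent. The price is the square root: on a trajectory where $v$ changes sign you must switch from $\lambda_2=\sqrt{2gy+c}$ to $\lambda_2=-\sqrt{2gy+c}$ at the turning point $v=0$, and the field is not Lipschitz there. The paper's reparametrized field has the complementary defect that $\kappa$ blows up exactly at $v=0$. Both proofs therefore cover the generic trajectories cleanly and share the same mild singularity at $v=0$; your presentation would be strengthened by noting the two-branch choice $\lambda_2=\pm\sqrt{2gy+c}$ explicitly.
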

\begin{proof}
In fact, for the case when $\epsilon=0$ the classical approach for
Chapliguin-Carathodory's sleigh gives (Chapliguin's skate) the following equations of
motion
\[
\ddot{x}=0,\quad\ddot{y}=g+\mu\sin x ,\quad\ddot{z}=-\mu\cos x,\quad\sin x\dot{y}-\cos x\dot{z}=0
\]
 Hence, integrating we deduce the following differential system of first order (see for instance \cite{Sad})
\[
\begin{array}{ll}
\dot{x}=C_0\ne 0,\quad \dot{y}=(\displaystyle\frac{g\sin x}{C_0}+C_1)\cos x,\quad\dot{z}=(\displaystyle\frac{g\sin x}{C_0}+C_1)\sin x.
\end{array}%
\]
Let $\textbf{b}$  be the vector field associated with this differential system, i.e. $$ \textbf{b}= \left(C_0,\,(\displaystyle\frac{g\sin x}{C_0}+C_1)\cos x,\,(\displaystyle\frac{g\sin x}{C_0}+C_1)\sin x\right).$$

\smallskip

Whereas   $(\textbf{a},\mbox{rot}\textbf{b})=\dfrac{g\cos
x}{C_0}\ne{0}.$

\smallskip

Denoting by $ \kappa =\displaystyle\frac{C_0}{{g\sin
x}{C_0}+C_1)},$ we easily obtain that $(\textbf{a},\mbox{rot}(\kappa\textbf{
b}))=0$,  so  $\textbf{b}$ is Cartesian equivalent vector
field.

\smallskip

The vector field $ \kappa \textbf{b}$ can be
obtained from \eqref{ch22} by choosing $\lambda_3=\kappa
$ and $\lambda_2=C_0.$

Summarizing the corollary is proved.
\end{proof}

\smallskip

{\it The rigid body around a fixed point in the Suslov case}

 \smallskip

in this section we study one classical problem of non-holonomic
dynamics formulated by Suslov \cite{Su,Koz3}. We consider the
rotational motion of a rigid body around a fixed point and subject
to the non-holonomic constraints $(\tilde{\textbf{a}},\omega)=0$
where ${\omega}=(\omega_1,\,\omega_2,\,\omega_3)$ is a body angular velocity and
$\tilde{\textbf{a}}$ is a constant vector. Suppose the body
rotates in an force field with potential
$U(\gamma_1,\gamma_2,\gamma_3)$. Applying the method of Lagrange
multipliers we write the equations of motion in the form

\begin{equation}
\label{81}
\begin{array}{ll}
  I\dot{{\omega}}= &[I\omega\times{\omega}]+[{\gamma}\times\frac{\partial
U}{\partial{\gamma}}]+\mu \tilde{\textbf{a}}, \quad
 \dot{{\gamma}}=[{\gamma}\times{\omega}]\vspace{0.20cm} \\
  &(\tilde{\textbf{a}},{\omega})=0
\end{array}%
\end{equation}
Where $ \gamma=\gamma_1,\gamma_2,\gamma_3)=(\sin z\sin x,\,\sin z\cos
x,\,\gamma_3=\cos z),
$,\,  $I$  is the
inertial tensor of the body, $\mu$ is the Lagrange multiplier which can be expressed
as a function of $\omega$ and $\gamma$ as follows
\[\mu=-\dfrac{\left(\textbf{a},[I\omega,\,{\omega}]+[{\gamma},\,\frac{\partial
U}{\partial{\gamma}}]\right)}{(\textbf{a},I^{-1}\textbf{a})}.\]

\smallskip

  It is well-known the following result \cite{Koz3}
\begin{proposition}
If $ \textbf{a}$ is an eigenvector of operator $I,$ i.e.
 \begin{equation}\label{K}
 I\textbf{a}=\kappa\textbf{a},
 \end{equation}
 then the phase flow of system \eqref{81} preserves the "standard"measure in $ \mathbb{R}^6=\mathbb{R}^3\{\omega\}\times\mathbb{R}^3\{ \gamma\}.$
\end{proposition}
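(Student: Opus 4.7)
The plan is to show that the Euclidean divergence of the vector field $X=(\dot\omega,\dot\gamma)$ on $\mathbb{R}^6$ vanishes identically; Liouville's theorem then yields preservation of the standard measure $d\omega\,d\gamma$. I would split
\[
\mathrm{div}\,X \;=\; \sum_{i=1}^{3}\frac{\partial\dot\omega_i}{\partial\omega_i} \;+\; \sum_{i=1}^{3}\frac{\partial\dot\gamma_i}{\partial\gamma_i},
\]
and dispose of the second sum immediately: for each fixed $\omega$ the map $\gamma\mapsto\gamma\times\omega$ is linear and divergence-free, since its matrix is skew-symmetric.

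For the first sum, I would write $\dot\omega = I^{-1}g(\omega,\gamma) + \mu(\omega,\gamma)\,I^{-1}\tilde{\textbf{a}}$, where $g=[I\omega\times\omega]+[\gamma\times\partial U/\partial\gamma]$ and $I^{-1}\tilde{\textbf{a}}$ is a constant vector. Only $[I\omega\times\omega]$ contributes $\omega$-dependence to $g$, and a direct index computation gives $\mathrm{div}_\omega(I^{-1}[I\omega\times\omega])=0$: using $\sum_i (I^{-1})_{im}I_{ji}=\delta_{mj}$, one contribution collapses to the trivial sum $\epsilon_{jjk}\omega_k$, while the other, $\sum_{i,m}\epsilon_{mji}(I^{-1})_{im}$, vanishes because it contracts a tensor antisymmetric in $(m,i)$ against the symmetric $I^{-1}$. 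Since $I^{-1}\tilde{\textbf{a}}$ is a constant vector, it then remains only to verify $(I^{-1}\tilde{\textbf{a}})\cdot\nabla_\omega\mu = 0$.

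This is where the eigenvector hypothesis enters. Using the identity
\[
\nabla_\omega\bigl[\tilde{\textbf{a}}\cdot(I\omega\times\omega)\bigr] \;=\; \tilde{\textbf{a}}\times I\omega \;+\; I(\omega\times\tilde{\textbf{a}}),
\]
(valid because $I$ is symmetric) and pairing with $I^{-1}\tilde{\textbf{a}}$, I would rewrite the second inner product via $(I^{-1}\tilde{\textbf{a}})\cdot I(\omega\times\tilde{\textbf{a}}) = \tilde{\textbf{a}}\cdot(\omega\times\tilde{\textbf{a}})=0$, again using symmetry of $I$. The first inner product is $(I^{-1}\tilde{\textbf{a}})\cdot(\tilde{\textbf{a}}\times I\omega)$; under $I\tilde{\textbf{a}}=\kappa\tilde{\textbf{a}}$ we have $I^{-1}\tilde{\textbf{a}}=\kappa^{-1}\tilde{\textbf{a}}$, so this becomes $\kappa^{-1}\tilde{\textbf{a}}\cdot(\tilde{\textbf{a}}\times I\omega)=0$, a scalar triple product with a repeated vector. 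The denominator $(\tilde{\textbf{a}},I^{-1}\tilde{\textbf{a}})$ in $\mu$ is constant, and the potential piece of the numerator depends only on $\gamma$, so neither contributes to $\nabla_\omega\mu$; this closes the argument.

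The main obstacle is the index bookkeeping in showing $\mathrm{div}_\omega(I^{-1}[I\omega\times\omega])=0$, where the guiding principle is to exploit symmetry of $I$ and $I^{-1}$ against the antisymmetry of the Levi-Civita symbol at each step. The role of the hypothesis is then transparent: $I\tilde{\textbf{a}}=\kappa\tilde{\textbf{a}}$ is invoked at exactly one point, to make $I^{-1}\tilde{\textbf{a}}$ parallel to $\tilde{\textbf{a}}$ and thereby kill the residual scalar triple product $(I^{-1}\tilde{\textbf{a}})\cdot(\tilde{\textbf{a}}\times I\omega)$.
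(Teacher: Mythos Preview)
Your argument is correct. The divergence computation is carried out cleanly: the Poisson part $\dot\gamma=[\gamma\times\omega]$ is manifestly divergence-free in $\gamma$; the Euler part $I^{-1}[I\omega\times\omega]$ is divergence-free in $\omega$ by the symmetric/antisymmetric contraction you indicate; and the only term that could spoil things, $(I^{-1}\tilde{\textbf{a}})\cdot\nabla_\omega\mu$, is killed precisely by the eigenvector hypothesis, which forces $I^{-1}\tilde{\textbf{a}}\parallel\tilde{\textbf{a}}$ and hence annihilates the residual triple product. Your gradient identity $\nabla_\omega\bigl(\tilde{\textbf{a}}\cdot(I\omega\times\omega)\bigr)=\tilde{\textbf{a}}\times I\omega+I(\omega\times\tilde{\textbf{a}})$ is also correct for symmetric $I$.

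As for comparison: the paper does not give its own proof of this proposition. It is quoted as a well-known result with a reference to Kozlov's monograph \cite{Koz3}, and the paper proceeds directly to assume the eigenvector condition \eqref{K} in what follows. Your Liouville-divergence argument is the standard route and is exactly what one would expect to find behind the citation; there is nothing to contrast.
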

G.K.Suslov has considered a particular case when the body is not
under action of exterior forces: $U\equiv 0$. If \eqref{K} holds
then the equations  \eqref{81} have the additional first integral
$K_4=(I\omega ,I\omega ).$  E.I.Kharlamova in \cite{Kharlamov}
study the case when the body rotates in the homogenous force field
with the potential  $U=( \textbf{b},\gamma )$ where $ \textbf{b}$
is an orthogonal to $ \textbf{a}$ vector. Under these conditions
the equation of motion have the first integral $ K_4=( I\omega
,\textbf{b}).$ V.V. Kozlov in \cite{Koz1} consider an opposite
case, when $ \textbf{b}=\epsilon \textbf{a},\,\epsilon\ne{0}.$ The
integrability problem in this case was study in particular in
\cite{Koz3,MP}. The case when  $U=\epsilon \det{I}(I^{-1}\gamma
,\,\gamma )$ the system \eqref{81} have the Clebsch-Tisseran first
integral $K_4=\frac{1}{2}(I\omega ,I\omega ) -\dfrac{1}{2}\epsilon
\det{I}(I^{-1}\gamma ,\,\gamma ),$ \cite{Koz3}.

{From} now on, we suppose that equality \eqref{K} is fulfilled. We
assume that vector $\textbf{a}$ coincides with one of the
principal axes and without loss of generality we can choose it as
the third axis, i.e., $\textbf{a}=(0,0,1)$ (see for more details
\cite{Koz3}) Equations of motion have the following form

\begin{equation}
\label{222}
\left\{%
\begin{array}{ll}
I_1\dot{\omega}_1=\gamma_3\partial_{\gamma_2}U-\gamma_2\partial_{\gamma_3}U,\quad
I_2\dot{\omega}_2=\gamma_1\partial_{\gamma_3}U-\gamma_3\partial_{\gamma_1}U,\vspace{0.2cm}\\
(I_1-I_2)\omega_1\omega_2+\gamma_2\partial_{\gamma_1}U-\gamma_1\partial_{\gamma_2}U+\mu=0,\quad  \vspace{0.2cm}\\
\dot{\gamma}_1=-\gamma_3\omega_2,\quad
\dot{\gamma}_2=\gamma_3\omega_1,\quad
\dot{\gamma}_3=\gamma_1\omega_2-\gamma_2\omega_1,
\end{array}%
\right.
\end{equation}
where $I_1,\,I_2$ are the principal moments of inertia of the body with respect to the
 first and the second axis.
We observe that the above mentioned choice of $ \textbf{a}$
guarantees that the phase flow of system \eqref{222} preserves the
standard measure in $ \mathbb{R}^5 \{
\omega_1,\,\omega_2,{\gamma}\}$.

In \cite{Ram6} we prove the following result

\begin{theorem}\label{P1}

 Let us suppose that the body rotates within the force field defined by  the potential
\begin{equation}\label{333}
  U=\frac{1}{2I_1I_2}(I_1\mu^2_1+I_2\mu^2_2)-h,
\end{equation}
 where $h$ is a constant and $\mu_1,\,\mu_2$ are the solutions of the following  first order partial
differential equation
\begin{equation}\label{4}
\gamma_3(\frac{\partial \mu_1}{\partial\gamma_2}- \frac{\partial
\mu_2}{\partial \gamma_1})-\gamma_2\frac{\partial
\mu_1}{\partial\gamma_3}+\gamma_1\frac{\partial \mu_2}{\partial
\gamma_3}=0,
\end{equation}
 then the following statements hold:
 \begin{itemize}
 \item[(I)]

 The equations \eqref{222} have the  first integrals
  \begin{equation}\label{5}
 I_1\omega_1-\mu_2=0,\quad
I_2\omega_2+\mu_1=0,
\end{equation}
consequently, they are integrable  by quadratures. In particular
\begin{equation}\label{s}
\begin{array}{rl}
I_1\omega_1=&\displaystyle\frac{\partial
S(\gamma_1,\gamma_2,K_2)}{\partial\gamma_2}+
\Psi_2(\gamma^2_1+\gamma^2_3,K_2,\gamma_2)\vspace{0.2cm}\\&+\gamma_1\Omega
(\gamma^2_1+\gamma^2_2,K_2,\gamma_3),\vspace{0.2cm}\\
I_2\omega_2=&-\displaystyle\frac{\partial
S(\gamma_1,\gamma_2,K_2)}{\partial\gamma_1}- \Psi_1(\gamma^2_1
+\gamma^2_3,K_2,\gamma_1)\vspace{0.2cm}\\
&-\gamma_2\Omega (\gamma^2_1+\gamma^2_2,K_2,\gamma_3),
\end{array}%
\end{equation}
are constants on the solutions of \eqref{222}, where
$K_2=\gamma^2_1+\gamma^2_2+\gamma^2_3$ and
$S,\Psi_1,\,\Psi_2,\,\Omega$ are arbitrary smooth functions .

\item[(II)] The Suslov's,\,Kharlamova-Zabelina's,\,Kozlov's ,
Clebsch-Tisseran's and Tisseran-Okunova's first integrals can be obtained from \eqref{s}.

\item[(III)] The dependence $ \gamma = \gamma (t)$ we determine by quadratures of the Poisson equations
 which in this case take the form:
\begin{equation}\label{P}
\dot{\gamma}_1=\dfrac{\gamma_3\mu_1}{I_2},\quad
 \dot{\gamma}_2=\dfrac{\gamma_3\mu_2}{I_1},\quad
 \dot{\gamma}_3=-\dfrac{\gamma_1\mu_1}{I_2}-\dfrac{\gamma_2\mu_2}{I_1}.
   \end{equation}
\end{itemize}

\end{theorem}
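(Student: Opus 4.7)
My plan is to verify the invariant relations \eqref{5} by direct differentiation along the flow \eqref{222}, deduce the Poisson form \eqref{P} and integrate by the Jacobi last-multiplier theorem, and finally describe the general solution of the PDE \eqref{4} in the structured form \eqref{s} from which the classical integrals in (II) drop out.

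For part (I), I would compute $\frac{d}{dt}(I_1\omega_1-\mu_2)$ along \eqref{222}, using the expansion $\partial_{\gamma_j}U=\mu_1\partial_{\gamma_j}\mu_1/I_2+\mu_2\partial_{\gamma_j}\mu_2/I_1$ obtained from \eqref{333} together with the Poisson equations for $\dot\gamma_k$. On the locus $\{I_1\omega_1=\mu_2,\;I_2\omega_2=-\mu_1\}$ the result separates into a $1/I_1$-block and a $1/I_2$-block. The $1/I_1$-block cancels by antisymmetry of the cross-terms, while the $1/I_2$-block collects into $(\mu_1/I_2)\bigl[\gamma_3(\partial_{\gamma_2}\mu_1-\partial_{\gamma_1}\mu_2)-\gamma_2\partial_{\gamma_3}\mu_1+\gamma_1\partial_{\gamma_3}\mu_2\bigr]$, which vanishes by the hypothesis \eqref{4}. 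The symmetric calculation handles the second relation (with $\mu_2/I_1$ now multiplying the same bracket). Part (III) follows by substituting \eqref{5} into the Poisson block of \eqref{222}; integrability by quadratures of the reduced three-dimensional system \eqref{P} then follows from the invariant measure supplied by the preceding proposition under hypothesis \eqref{K}, the obvious first integral $K_2=\gamma_1^2+\gamma_2^2+\gamma_3^2$, and the Jacobi last-multiplier theorem.

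For (II) and the explicit representation \eqref{s}, I would regard \eqref{4} as a single scalar linear PDE on the pair $(\mu_1,\mu_2)$. Splitting $(\mu_1,\mu_2)$ into a potential part $(\partial_{\gamma_2}S,\,-\partial_{\gamma_1}S)$, which annihilates the rotational term $\partial_{\gamma_2}\mu_1-\partial_{\gamma_1}\mu_2$, plus corrections depending on the quadratic invariants of the characteristic rotations ($\gamma_1^2+\gamma_2^2$, $\gamma_1^2+\gamma_3^2$, $\gamma_2^2+\gamma_3^2$), yields precisely the ansatz \eqref{s}; direct substitution confirms that each summand satisfies \eqref{4}. The classical integrals of Suslov, Kharlamova--Zabelina, Kozlov, Clebsch--Tisseran, and Tisseran--Okunova are recovered by elementary choices of $S,\Psi_1,\Psi_2,\Omega$: for instance, a quadratic $S$ reproduces Suslov's $(I\omega,I\omega)$, while $\Psi_j$ linear in $\gamma_j$ with $S=\Omega=0$ gives the Kharlamova--Zabelina linear-in-$\gamma$ integral.

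The main obstacle is the structural step: proving that the four-parameter family \eqref{s} actually \emph{exhausts} the general solution of \eqref{4}, so that every Suslov-type integrable case produced by Theorem \ref{P1} is captured. This requires identifying the characteristic foliation of \eqref{4} precisely and exploiting its rotation symmetries around the three coordinate axes. Once this fact is in hand, the remainder of the proof is the elementary cancellation of step one and the routine last-multiplier argument of step two.
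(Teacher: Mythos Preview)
Your direct-verification approach is correct and genuinely different from what the paper does. The paper does not prove Theorem~\ref{P1} by differentiating along the flow; it cites \cite{Ram6} for the full proof and then \emph{rederives} the key objects \eqref{4} and \eqref{P} as an instance of its general Cartesian machinery (Corollary~\ref{main1}). Concretely, the paper sets up the Cartesian vector field on $SO(3)$ with the Suslov constraint $\omega_3=0$, obtains the first-order system \eqref{86} together with the compatibility condition \eqref{87}, and after passing to the Poisson variables $\gamma$ shows that these become exactly \eqref{888}$=$\eqref{P} and \eqref{89}$=$\eqref{4}. Theorem~\ref{main} then guarantees that the Cartesian system $\dot{\mathbf x}=\mathbf v$ is an invariant relationship of the second-order Lagrangian system with force function $\tfrac12\|\mathbf v\|^2$, which is precisely the potential \eqref{333}. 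So the paper's argument is structural: (I) and (III) are read off from the general theorem rather than checked by the cancellation you describe. Your route is more elementary and self-contained; the paper's route demonstrates that the Suslov case is a specialization of the Cartesian scheme, which is the point of the section.

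Two remarks on your plan. First, your ``main obstacle'' is not one: the theorem says only \emph{in particular} that the family \eqref{s} consists of solutions of \eqref{4}, not that it is the general solution. Termwise verification of \eqref{s} and then specialization to the classical choices suffice for (II); no exhaustiveness argument is required. Second, the last-multiplier step is slightly miscast: the invariant measure of the preceding proposition lives on the full phase space, not on the reduced system \eqref{P}, and the divergence of \eqref{P} does not vanish in general. You do not need it anyway: once the invariant relations \eqref{5} are in force and you use the geometric integral $K_2$, the system \eqref{P} restricted to a sphere $K_2=\text{const}$ is two-dimensional and autonomous, hence already integrable by quadratures.
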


It is interesting to note that the proof of the Theorem \ref{P1}
first was obtained using Cartesian  approach (for more details see
\cite{Ram4}) which we proposed below .

\smallskip

\smallskip

Let us suppose that manifold $\textsc{Q}$ is the special
orthogonal group of rotations of $\mathbb{E}^3,$ i.e.
$\textsc{Q}=SO(3),$ with the Riemann metric
\[G=\left(
\begin{array}{ccc}
  I_3 & I_3\cos{z} & 0 \\
  I_3\cos{z} & (I_1\sin^2 x+I_2\cos^2 x)\sin^2{z}+I_3\cos^2{z} & (I_1-I_2)\sin x\cos x\sin z \\
  0 & (I_1-I_2)\sin x\cos x\sin z & I_1\cos^2x+I_2\sin^2x\\
\end{array}\right)
\]

$\det G=I_1I_2I_3\sin^2z.$

\smallskip

In this case we have that the constraint is
$\omega_3=\dot{x}+\cos\,z \dot{y}=0.$

Hence $\textbf{a}=(1,\,\cos z,\,0).$  By choosing the 1-form $\Omega_j$ for $j=1,2,3$ as
follow $$\Omega_1=dx+\cos z dy,\quad \Omega_2=dy,\quad \Omega_3=dz.$$ Consequently
$\Upsilon=d\Omega_1\wedge d\Omega_2\wedge d\Omega_3(\partial_x,\,\partial_y,\,\partial_z)=1,$ and
$$\textbf{v}=\lambda_2(\cos\,z\,\partial_x-\partial_y)-\lambda_3\partial_z$$
Thus we obtain that
\[
\begin{array}{lll}
&p_1=0,\vspace{0.20cm}\\& p_2=(I_3-I_1+(I_1-I_2)\cos^2x)\cos
z\sin^2z\lambda_2+(I_1-I_2)\cos x\sin x\sin  z\lambda_3,\vspace{0.20cm}\\
&p_3=(I_2\sin^2x+I_1\cos^2x)\lambda_3+(I_2-I_1)\sin x\cos x\sin
z\cos z\lambda_2\end{array}
\]
The differential equations \eqref{L1} in this cases take the form
respectively

\begin{equation}
\label{86}
\dot{x}=\cos z\,\lambda_2,\quad\dot{y}=-\lambda_2,\quad\dot{z}=-\lambda_3
  \end{equation}
and
\begin{equation}\label{87}
  (\textbf{a},\mbox{rot}\textbf{v})=\partial_zp_2-\partial_yp_3+\cos
z\partial_xp_3=0
\end{equation}
After the change
 $
 \gamma_1=\sin z\sin x,\quad\gamma_2=\sin z\cos
x,\quad\gamma_3=\cos z
$ the system \eqref{86} and condition
\eqref{87} can be written as follow
\begin{equation}
\label{888}
\dot{\gamma_1}=\frac{1}{I_2}\mu_1\gamma_3,\quad\dot{\gamma_2}=\frac{1}{I_1}\mu_2\gamma_3,\quad\dot{\gamma_3}=-\frac{1}{I_1I_2}\left(I_1\mu_1\gamma_1+
I_2\mu_2\gamma_2\right)
\end{equation}

\begin{equation}\label{89}
\sin z(\gamma_3(\frac{\partial \mu_1}{\partial\gamma_2}-
\frac{\partial \mu_2}{\partial \gamma_1})-\gamma_2\frac{\partial
\mu_1}{\partial\gamma_3}+\gamma_1\frac{\partial \mu_2}{\partial
\gamma_3})-\cos x\partial_y\mu_2-\sin
x\partial_y\mu_1=0
\end{equation}
where
$
\mu_2=-I_1(\cos x \lambda_3+\sin x \lambda_2),\quad\mu_1=I_2(-\sin x \lambda_3+\cos x \lambda_2).
 $
 \smallskip

 Clearly if $ \mu_j=\mu_j(x,z),$ for $j=1,2,$  then the equation \eqref{89} coincide with equation \eqref{4}  and \eqref{888} coincide with \eqref{P}.

 \smallskip

\section{Cartesian vector
 field in three dimensional Euclidean space }

Let $\mathbb{E}^3$ be the three dimensional Euclidian space with
Cartesian coordinates $\textbf{x}=(x_1,x_2,x_3).$

We consider a particle with Lagrangian function
$$
L=\frac{1}{2}||\dot{\textbf{x}}||^2-U(x),$$ and constraint
 $ (\textbf{a},\dot{\textbf{x}})=0,$ where $(\,,\,)$ denotes the
 scalar product in $\mathbb{E}^3,$
$\dot{{\textbf{x}}}=(\dot{x}_1,\,\dot{x}_2,\,\dot{x}_3)$ and
$\textbf{a}={\textbf{a}}(x)=(a_1(x)\,,a_2(x),\,a_3(x))$ is a
smooth vector field in
 $\mathbb{E}^3.$ Below we shall use the following notation
 $\partial_\textbf{x}f=(\partial_{x_1}f,\partial_{x_2}f,\partial_{x_3}f)^T.$
\smallskip

The equations of motion in particular for constrained particle in
$\mathbb{R}^3,$ can be deduced from the d$^{'}$Alembert-Lagrange
Principle and are such that
\[
\ddot{\textbf{x}}=\partial_{\textbf{x}}U+\mu\textbf{a},\quad(\textbf{a},\dot{\textbf{x}})=0,
\]
where $\mu$ is the Lagrangian multiplier.
\smallskip

 Cartesian and Lagrangian approach produces the following differential equations
respectively (see formula \eqref{L1} and \eqref{LLL1})

\begin{equation}\label{E1}
\begin{array}{ll}
\dot{\textbf{x}}=&[\textbf{a}\times \textbf{w}],\qquad (\textbf{a},\mbox{rot}[\textbf{a}\times \textbf{w}])=0,\vspace{0.20cm}\\
\ddot{{\textbf{x}}}=&\partial_{\textbf{x}}\left(\dfrac{1}{2}||[\textbf{a}\times
\textbf{w}]||^2\right)+(\mbox{rot}[\textbf{a}\times
\textbf{w}],\textbf{w}) {\textbf{a}},\quad
\end{array}
 \end{equation}
{\it Example}

Suppose that\quad
$\textbf{a}=f_{\textbf{x}},\quad\textbf{w}=\dfrac{\textbf{c}}{c^2},$

  where $f=r+(\textbf{x},\textbf{b})=c^2$
   and $\textbf{b}$ and $\textbf{c}$ are constants vector
field  such that $$(\textbf{x},\textbf{c})=0,\quad
(\textbf{b},\textbf{c})=0,\quad
||\textbf{c}||^2=c^2,\,r=||\textbf{x}||.$$ Then Cartesian and
Lagrangian approach generate the following differential equations
respectively
 $$\dot{\textbf{x}}=\dfrac{[f_{\textbf{x}}\times\textbf{c}]}{c^2},\qquad
 \ddot{\textbf{x}}=-\displaystyle{\dfrac{ \textbf{x}}{r^3}}.$$
Indeed in view of the
relation\,\,$\mbox{rot}[f_{\textbf{x}}\times\textbf{c}]=\dfrac{\textbf{c}}{r},$
we get $$\left(\textbf{a},\mbox{rot}[\textbf{a}\times
\textbf{w}]\right)=\left(
{f_{\textbf{x}}},\dfrac{\textbf{c}}{c^2}\right)=0$$ thus the given
vector field is Cartesian.

\smallskip

{From} the relations  \quad $ (\mbox{rot}[\textbf{a}\times
\textbf{w}],\textbf{w})=\displaystyle{-\dfrac{1}{c^4}\left(\textbf{c},\mbox{rot}[f_{\textbf{x}}\times
\textbf{c}]\right)}=-\dfrac{1}{rc^2},$  we obtain that  the second
order differential system \eqref{E1} in this case takes the form
\[\ddot{{\textbf{x}}}=\dfrac{1}{c^2}\left(\partial_{\textbf{x}}(\dfrac{f}{r})-\dfrac{\partial _{\textbf{x}}f}{r}\right)=
-\dfrac{f}{c^2}\displaystyle{\dfrac{
\textbf{x}}{r^3}}=-\displaystyle{\dfrac{ \textbf{x}}{r^3}}\]

Below we study the particular case when the vector field $\textbf{a}$ satisfies the equation
\begin{equation}\label{K0}
\mbox{rot}(\textbf{a})=\nu (x)\textbf{a},\end{equation} where
$\nu$ is certain function.

\smallskip

{\it Optical-mechanical analogy}

\smallskip

From the standpoint of geometric optics, propagation of light in $
\mathbb{E}^3$ can be represented as a flow of particles.
Trajectories of particle are called { \it rays.} It is known
\cite{Koz1} that the vector field $ \textbf{K}$ of an arbitrary
system of rays an a homogeneous optical medium satisfies the
relation
$ \textbf{K}\times \mbox{\mbox{rot}}\textbf{K}=\textbf{0}.$
System of rays such that
$\mbox{\mbox{rot}}\textbf{K}\ne\textbf{0}$ are called {\it Kummer
systems}

\begin{proposition}
Let $ \textbf{a}$ be the Kummer vector field i.e.,  satisfies the
partial differential equation \eqref{K0} with $ \nu\ne 0.$
 Then Cartesian and Lagrangian approach generate the
following differential equations respectively
\begin{equation} \label{8}
\dot{\textbf{x}}=\textbf{V}-\gamma\textbf{a}=\dfrac{1}{||\textbf{a}||^2}[\textbf{a}\times\mbox{rot}\textbf{W}]=\textbf{v}(x)
\end{equation}
and
\begin{equation} \label{88}
\qquad\ddot{{\textbf{x}}}=\partial_{\textbf{x}}\left(\frac{1}{2}||[\textbf{a}\times
{\textbf{w}}]||^2\right)+\left(\textbf{R},\,[\textbf{
a}\times\textbf{w}])\right) {\textbf{a}},
\end{equation}
 where $\gamma=\dfrac{( \textbf{a},\textbf{V}
)}{|| \textbf{a}||^2},$\, $\textbf{V}$ and $\textbf{R}$ are the
vector fields:
\begin{equation} \label{9}
\mbox{rot}\textbf{V}=\nu\textbf{V}+[\textbf{a}\times\tilde{\textbf{V}}],\qquad \textbf{R}=\tilde{\textbf{V}}-\partial_{\textbf{x}}\gamma ,
\end{equation}
where $ \tilde{\textbf{V}}$ is an arbitrary smooth vector field.
\end{proposition}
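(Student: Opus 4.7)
The plan is to specialize Corollary \ref{main1} to Euclidean $3$-space with metric $G = I$, where $\textbf{v} = [\textbf{a}\times\textbf{w}]$ and the Cartesian integrability condition reads $(\textbf{a},\mbox{rot}[\textbf{a}\times\textbf{w}]) = 0$. The key observation is that any velocity field $\textbf{v}$ satisfying the non-holonomic constraint $(\textbf{a},\textbf{v})=0$ admits the parametrization
$\textbf{v} = \textbf{V} - \gamma\,\textbf{a}$, $\gamma = (\textbf{a},\textbf{V})/\|\textbf{a}\|^2$,
with $\textbf{V}$ arbitrary. The task is then to determine which $\textbf{V}$ correspond to Cartesian vector fields, and this is where the Kummer hypothesis $\mbox{rot}\,\textbf{a} = \nu\textbf{a}$ enters.

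First I would apply the Leibniz rule $\mbox{rot}(\gamma\textbf{a}) = \gamma\,\mbox{rot}\,\textbf{a} + [\nabla\gamma\times\textbf{a}]$ together with $\mbox{rot}\,\textbf{a} = \nu\textbf{a}$ to obtain
$\mbox{rot}\,\textbf{v} = \mbox{rot}\,\textbf{V} - \nu\gamma\,\textbf{a} + [\textbf{a}\times\nabla\gamma]$.
Taking the inner product with $\textbf{a}$ and using $(\textbf{a},[\textbf{a}\times\nabla\gamma]) = 0$, the integrability condition $(\textbf{a},\mbox{rot}\,\textbf{v}) = 0$ reduces to $(\textbf{a},\mbox{rot}\,\textbf{V}) = \nu(\textbf{a},\textbf{V})$. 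Equivalently $\mbox{rot}\,\textbf{V} - \nu\textbf{V}$ is orthogonal to $\textbf{a}$, so there exists a vector field $\tilde{\textbf{V}}$ with $\mbox{rot}\,\textbf{V} = \nu\textbf{V} + [\textbf{a}\times\tilde{\textbf{V}}]$, which is precisely the first relation in \eqref{9}.

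For the alternative form $\dot{\textbf{x}} = \|\textbf{a}\|^{-2}[\textbf{a}\times\mbox{rot}\,\textbf{W}]$, I would show that $[\textbf{V}\times\textbf{a}]$ is divergence-free, so locally it admits a vector potential. Indeed,
$\mbox{div}\,[\textbf{V}\times\textbf{a}] = (\textbf{a},\mbox{rot}\,\textbf{V}) - (\textbf{V},\mbox{rot}\,\textbf{a}) = \nu(\textbf{a},\textbf{V}) - \nu(\textbf{V},\textbf{a}) = 0$,
where both terms vanish thanks, respectively, to the constraint on $\textbf{V}$ and to the Kummer condition. Hence locally $[\textbf{V}\times\textbf{a}] = \mbox{rot}\,\textbf{W}$, and the BAC-CAB identity gives
$\|\textbf{a}\|^{-2}[\textbf{a}\times\mbox{rot}\,\textbf{W}] = \|\textbf{a}\|^{-2}[\textbf{a}\times[\textbf{V}\times\textbf{a}]] = \textbf{V} - \gamma\textbf{a}$, establishing \eqref{8}.

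For the Lagrangian equation \eqref{88}, I would start from \eqref{LLL1} of Corollary \ref{main1}, which in the Euclidean case reduces to
$\ddot{\textbf{x}} = \partial_{\textbf{x}}(\tfrac{1}{2}\|\textbf{v}\|^2) - (\textbf{w},\mbox{rot}\,\textbf{v})\,\textbf{a}$.
Substituting the previously derived expression for $\mbox{rot}\,\textbf{v}$, using $(\textbf{w},\textbf{v}) = (\textbf{w},[\textbf{a}\times\textbf{w}]) = 0$, and applying the triple-product identity
$(\textbf{w},[\textbf{a}\times\textbf{Y}]) = -(\textbf{Y},[\textbf{a}\times\textbf{w}]) = -(\textbf{Y},\textbf{v})$
with $\textbf{Y} = \tilde{\textbf{V}} + \nabla\gamma$, one obtains $(\textbf{w},\mbox{rot}\,\textbf{v}) = -(\textbf{R},\textbf{v})$ for the combination $\textbf{R}$ defined in \eqref{9}, producing \eqref{88}. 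The main technical obstacle is bookkeeping the signs in the vector identities (particularly in the Leibniz rule for $\mbox{rot}(\gamma\textbf{a})$ and in the triple products) so that the formula for $\textbf{R}$ comes out in the stated form; the conceptual content of the argument is simply the combination of the BAC-CAB identity with the Kummer structural hypothesis on $\textbf{a}$.
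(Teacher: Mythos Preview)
Your argument is correct and follows essentially the same route as the paper, only with the order of steps reversed: the paper starts from $\textbf{v}=[\textbf{a}\times\textbf{w}]$, shows $[\textbf{a}\times\textbf{v}]$ is solenoidal (using the Kummer condition and the Cartesian constraint), writes it as $\mbox{rot}\,\textbf{W}=[\textbf{a}\times\textbf{V}]$, and then extracts the condition on $\textbf{V}$ and the formula $\mbox{rot}\,\textbf{v}=\nu\textbf{v}+[\textbf{R}\times\textbf{a}]$; you instead begin with the parametrization $\textbf{v}=\textbf{V}-\gamma\textbf{a}$, derive the condition on $\textbf{V}$ directly from $(\textbf{a},\mbox{rot}\,\textbf{v})=0$, and recover $\textbf{W}$ afterwards. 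The vector identities invoked (divergence of a cross product, BAC--CAB, cyclic triple product) are identical in both treatments, and the sign bookkeeping you flag as the main obstacle is exactly where the paper's own presentation is loose, so your caveat is well placed.
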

\begin{proof}
Indeed, taking the well-known relations

\[
div[\textbf{A}\times\textbf{{B}}]=(\textbf{{A}},\mbox{rot}\textbf{{B}})-(\mbox{rot}\textbf{{A}},
\textbf{B}).
\]

 into account, we obtain
\[
 \mbox{div}[\textbf{a}\times\,[\textbf{a}\times\textbf{w}]]=\big(\textbf{a},\mbox{rot}[\textbf{a}\times\textbf{w}]\big)-
 \big(\mbox{rot}\textbf{a},[\textbf{a}\times\textbf{w}]\big)
 \]
Hence in view of this identity, and  considering that
$(\textbf{a},\mbox{rot}[\textbf{a}\times \textbf{w}])=0$ and
\eqref{K0}
 we obtain that $[\textbf{a}\times
[\textbf{a}\times{\textbf{w}}]]$ is a solenoidal vector field, so
\begin{equation}\label{V}
[\textbf{a}\times
[\textbf{a}\times\textbf{w}]]=(\textbf{a},\textbf{a})\textbf{w}-(\textbf{a},\textbf{w})
\textbf{a} =\mbox{rot}\textbf{W},
\end{equation}

where ${\textbf{W}}$ is a  smooth vector field  such that
$
(\textbf{a},\,\mbox{rot}\textbf{W})=0,
$
 consequently
 $
 \mbox{rot}\textbf{W}=[\textbf{a}\times{\textbf{V}}],
 $
for a smooth vector field $\textbf{V}$ which must satisfy the  partial differential equation
 \[\begin{array}{ll}
 \mbox{div}(\mbox{rot}\textbf{W})=
  \mbox{div}([\textbf{a}\times{\textbf{V}}])\\
  =(\textbf{a},\mbox{rot}\textbf{V})-(
 \textbf{V},\mbox{rot}\textbf{a})=\left(\textbf{a},\mbox{rot}\textbf{V}-\nu\textbf{V}\right)=0,
\end{array}
\]
hence we obtain the representation \eqref{9}

\smallskip

 In view of relation \eqref{V} we get the following representation
 for $\textbf{w}$
\[
\textbf{w}==\dfrac{(\textbf{a},\textbf{w})}{||\textbf{a}||^2}\textbf{a}+\frac{[\textbf{a}\times{\textbf{V}}]}{||\textbf{a}||^2}=
\dfrac{(\textbf{a},\textbf{w})}{||\textbf{a}||^2}\textbf{a}+\dfrac{\mbox{rot}\textbf{W}}{||\textbf{a}||^2}
\]
 From the relation $ \textbf{v}=[ \textbf{a}\times \textbf{w}]$ we
 get formula \eqref{8}.

\smallskip

On the other hand, after some calculations we obtain
\[\begin{array}{lll}
\mbox{rot}\textbf{v}=&\mbox{rot}\textbf{V}-[ \partial_{\textbf{x}}\gamma \times\textbf{a}]-\gamma\mbox{rot}\textbf{a}\vspace{0.20cm}\\
=&\nu \left(\textbf{V}-\gamma\textbf{a}\right)+[ \tilde{\textbf{V}}-\partial_{\textbf{x}}\gamma\times\textbf{a}]
\end{array}
\]or, what is the same
\begin{equation}\label{K22}
\mbox{rot} \textbf{v}=\nu \textbf{v}+[\textsc{R}\times\textbf{a}].
\end{equation}
Inserting \eqref{K22} in the relation $(\textbf{w},\mbox{rot}\textbf{v})$ we obtain
\[
\begin{array}{ll}
(\textbf{w},\mbox{rot}\textbf{v})&=(\textbf{w},\nu \textbf{v}+[\textbf{R}\times\textbf{a}])\\
&=(\textbf{w},\,[\textbf{R}\times\textbf{a}])=(-[\textbf{ w}\times\textbf{a}],\textbf{R})\\
&=\left([\textbf{ a}\times\textbf{w}], {\textbf{R}}\right).
\end{array}
\]
Hence from \eqref{E1} we obtain the second order differential equations \eqref{88}.
\end{proof}
\begin{corollary}
Let us suppose that Cartesian vector field is such that
\begin{equation}\label{K333}
{\mbox{rot}} \textbf{v}=\nu \textbf{v},\quad \nu\ne 0,
 \end{equation}
then Lagrangian approach generated the differential system
\[
\ddot{\textbf{x}
}=\partial_{\textbf{x}}\left(\dfrac{1}{2}||\textbf{v}||^2\right)
\]
which describe the motion of a material point of unit mass in the
potential field with the force function
$\dfrac{1}{2}||\textbf{v}||^2.$
\end{corollary}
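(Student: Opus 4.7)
The plan is to read the conclusion off directly from the general Lagrangian formula \eqref{E1} (equivalently \eqref{88}) by checking that, under hypothesis \eqref{K333}, the reactive term coefficient vanishes. Recall that the Lagrangian form of the equations of motion reads
\[
\ddot{\textbf{x}}=\partial_{\textbf{x}}\!\left(\tfrac{1}{2}\|[\textbf{a}\times\textbf{w}]\|^{2}\right)+\bigl(\mbox{rot}[\textbf{a}\times\textbf{w}],\textbf{w}\bigr)\textbf{a},
\]
so the only thing to show is that the scalar $\bigl(\mbox{rot}\textbf{v},\textbf{w}\bigr)$ multiplying $\textbf{a}$ is zero whenever $\mbox{rot}\textbf{v}=\nu\textbf{v}$.

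The key step is the elementary orthogonality observation: since $\textbf{v}=[\textbf{a}\times\textbf{w}]$ is by construction perpendicular to $\textbf{w}$, substituting the hypothesis gives
\[
(\mbox{rot}\textbf{v},\textbf{w})=\nu(\textbf{v},\textbf{w})=\nu\bigl([\textbf{a}\times\textbf{w}],\textbf{w}\bigr)=0.
\]
Equivalently, one may argue through the proposition: identity \eqref{K22} says $\mbox{rot}\textbf{v}=\nu\textbf{v}+[\textbf{R}\times\textbf{a}]$, so \eqref{K333} forces $[\textbf{R}\times\textbf{a}]=0$, i.e.\ $\textbf{R}$ is collinear with $\textbf{a}$; then $([\textbf{a}\times\textbf{w}],\textbf{R})=0$ and the reactive term in \eqref{88} disappears. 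Either route gives the same conclusion.

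Combining this with \eqref{E1} immediately yields
\[
\ddot{\textbf{x}}=\partial_{\textbf{x}}\!\left(\tfrac{1}{2}\|\textbf{v}\|^{2}\right),
\]
which is the Newtonian equation of a unit-mass particle in the potential $-\tfrac{1}{2}\|\textbf{v}\|^{2}$ (i.e.\ with force function $\tfrac{1}{2}\|\textbf{v}\|^{2}$), as claimed. There is no real obstacle here: the proof is a one-line corollary of the preceding proposition. The only point worth checking to make the statement non-vacuous is the consistency of \eqref{K333} with the standing assumption that $\textbf{v}$ arises as $[\textbf{a}\times\textbf{w}]$ for a Kummer field $\textbf{a}$ satisfying \eqref{K0}, which is precisely the case $\textbf{R}\parallel\textbf{a}$ singled out above.
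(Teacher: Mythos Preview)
Your proof is correct, and your second route (via \eqref{K22} to conclude $[\textbf{R}\times\textbf{a}]=\textbf{0}$ and hence $(\textbf{R},[\textbf{a}\times\textbf{w}])=0$) is exactly the argument the paper gives. Your first route is in fact a slight shortcut: rather than passing through the decomposition \eqref{K22} and the cyclic identity $(\textbf{R},[\textbf{a}\times\textbf{w}])=(\textbf{w},[\textbf{R}\times\textbf{a}])$, you substitute the hypothesis $\mbox{rot}\,\textbf{v}=\nu\textbf{v}$ directly into the coefficient $(\mbox{rot}\,\textbf{v},\textbf{w})$ appearing in \eqref{E1} and use the trivial orthogonality $([\textbf{a}\times\textbf{w}],\textbf{w})=0$; this bypasses the proposition altogether and is marginally cleaner, though the paper's route has the advantage of isolating the geometric content (namely that \eqref{K333} is equivalent to $\textbf{R}\parallel\textbf{a}$).
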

\begin{proof}
{From} \eqref{K333} and \eqref{K22} follows that $
[\textsc{R}\times\textbf{a}]=\textbf{0},$ based on this relation
and considering that
  \[\left(\textbf{R},\,[\textbf{a}\times\textbf{w}])\right)=\left(\textbf{w},\,[\textbf{R}\times\textbf{a}])\right)\]

  we deduce that $\left(\textbf{R},\,[\textbf{a}\times\textbf{w}])\right)=0,$
 thus, in view of \eqref{88}, we obtain the proof of the proposition.

\smallskip

This results coincide with theorem  J.Bernoulli (1696.)  " Light
rays in an isotropic optical medium  with the refraction index
$n(x)$ coincide with the trajectories of a material point in a
potential field with the force function $U=\dfrac{1}{2}n^2(x).$"
\end{proof}

\smallskip

{ \it Example }

\smallskip

 Consider a particle with Lagrangian function
$L=\dfrac{1}{2}\big(\dot{x}^2+\dot{y}^2+\dot{z}^2\big)$ and
constraint  $\omega_2=\sin x\,\dot{y}-\cos x\,\dot{z}=0,$

\smallskip

The vector field  $ \textbf{a}=(0,\,\sin x,\,-\cos x),$
 satisfies the equation
$\mbox{rot} \textbf{a}=\textbf{a}.$

\smallskip

 Classical approach generates the differential equations
 \[ \ddot{x}=0,\quad \ddot{y}=\mu \sin x ,\quad \ddot{z}=-\mu \cos
 x .\]
 Integrating these equations we obtain
 \begin{equation}\label{LLLL10}
   \dot{x}=C_1,\quad \dot{y}=C_2\cos x,\quad \dot{z}=C_2\sin
 x,
 \end{equation}
 where $C_1$ and $C_2$ are arbitrary constants.  The vector field associated to this system is Cartesian type.

\smallskip

Cartesian approach generate the differential system
\[
 \dot{x}=\lambda ,\quad \dot{y}=\varrho\,\cos x ,\quad
\dot{z}=\varrho\,\sin x
\]
where $ \lambda=w_3\sin x+w_2\cos x,\,\varrho=-w_1,$ and
$w_1,\,w_2,\,w_3$ are components of the arbitrary vector field
$\textbf{ w}.$

\smallskip

From the condition $(\textbf{a},\mbox{rot}[\textbf{a}\times \textbf{w}])=0$ follows that the functions $\lambda$ and $\varrho$ are
solutions of the linear partial differential equation
\begin{equation} \label{u2}
\partial_z\lambda\sin x+\partial_y\lambda\cos x-\partial_x\varrho=0.
\end{equation}

 In particular the vector field
 \begin{equation}\label{K33}
\textbf{v}=\left(\lambda(x),\,\varrho (y,z)\cos
x,\,\varrho (y,z)\sin x\right),
\end{equation}
satisfies \eqref{u2},
where $\lambda=\lambda (x)$  and $\varrho=\varrho (y,z)$ are arbitrary real
functions, .

\smallskip

Solving equations  $ \lambda (x)=w_3\sin x+w_2\cos x,\,\varrho (y,z)=-w_1,$ with  respect to the components of the vector $\textbf{w}$   we obtain
\[\textbf{w}=\left(-\varrho (y,z),\,\lambda (x)\cos x-\psi (x,y,z)\sin x ,\,\lambda (x)\sin x+ \psi (x,y,z)\cos x\right)\]
 where $\psi=\psi (x,y,z)$ is an arbitrary function on $\textsc{Q}$.

 \smallskip

It is easy to check that
 \[\mbox{rot} \textbf{v}=\textbf{v}+\textbf{u},\]
 where $ \textbf{u}=(\partial_z\varrho \cos x-\partial_y\varrho\sin x-\lambda,0,0)^T,$ is a vector orthogonal to
 vector $\textbf{a}.$ Hence we obtain the relations

 \[
 [\textbf{R}\times \textbf{a}]=\textbf{u},\quad (\textbf{w},[\textbf{R}\times \textbf{a}])=\varrho\left(\partial_y\varrho\sin x+\lambda-\partial_z\varrho \cos x\right), \quad ||[\textbf{w}\times\textbf{a}]||^2=\varrho^2+\lambda^2.
 \]

\smallskip

Lagrangian approach generates the equations
\[ \ddot{\textbf{x}}=\partial_\textbf{x}\left(\dfrac{1}{2}( \varrho^2+\lambda^2)\right)-\varrho\left(\partial_z\varrho \cos x-\partial_y\varrho\sin x-\lambda\right)\textbf{a} \]

 \smallskip

 The vector field \eqref{LLLL10} can be obtained from \eqref{K33} if we choose  $\lambda=C_1,\,\varrho=C_2.$ For these parameter values
Lagrangian approach generates the differential equations
$ \ddot{\textbf{x}}=C_1C_2\textbf{a}.$

\smallskip
If we choose $ \varrho$ and $\lambda$: $(\partial_z\varrho \cos x-\partial_y\varrho\sin x-\lambda=0$
 Then $ \mbox{rot} \textbf{v}=\textbf{v}$ and as a consequence Lagrangian
 approach generate the equation $\ddot{\textbf{x}}=\partial_\textbf{x}\left(\dfrac{1}{2}( \varrho^2+\lambda^2)\right).$

\smallskip

Below we study the case when the constraints are integrable.

\begin{corollary}
Let $\textbf{V}$ and $\textbf{a}$ are the vector field such that
\begin{equation}\label{GG1}
 \mbox{rot}\textbf{a}=\textbf{0},\quad \textbf{V}=\partial_fG\partial_\textbf{x}\Phi
 \end{equation}

 where $G=G(f,\Phi )$ and $ \Phi$ are an arbitrary smooth functions,  then
  Cartesian and Lagrangian approach for a particle in $\mathbb{E}^3$
which is constrained to move on the surface $f=f(x)=c$ generate
the following differential equations
\begin{equation} \label{12}
\dot{\textbf{x}}=\partial_fG\left(\partial_\textbf{x}\Phi-\gamma\partial_\textbf{x}f\right)=\textbf{v}(x),
\end{equation}

and
\begin{equation} \label{K444}
\begin{array}{cc}
\ddot{\textbf{x}}=\partial_\textbf{x}\left(\dfrac{\partial_fG^2}{2}||\partial_\textbf{x}f\times\partial_\textbf{x}\Phi||^2\right)\vspace{0.20cm}\\
-\partial_f G\left(\partial^2_{ff}G\partial_\textbf{x}\Phi -
\partial_\textbf{x}\gamma\,,\partial_{\textbf{x}}\Phi-
\gamma\partial_\textbf{x}f\right)\partial_\textbf{x}f,
\end{array}
\end{equation}

where $g=||\partial_\textbf{x}f||^2,$ and $ \gamma=\dfrac{(\partial_\textbf{x}f,\,\partial_\textbf{x}\Phi)}{g},$

\smallskip

If\, $\gamma=0$ then the equations \eqref{12} and \eqref{K444}
take the form respectively
\begin{equation}\label{15}
\dot{\textbf{x}}=\partial_fG\partial_\textbf{x}\Phi ,\quad\ddot{\textbf{x}}=\partial_\textbf{x}
\left(\dfrac{G^2_f}{2}||\partial_\textbf{x}\Phi ||^2\right)-\left(G_{ff}G||\partial_\textbf{x}\Phi ||^2\right)\partial_\textbf{x}f
\end{equation}
\end{corollary}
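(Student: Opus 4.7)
The plan is to read this off as the $\nu=0$ specialization of the Kummer-field proposition proved just above, combined with a direct verification that the ansatz $\textbf{V}=\partial_fG\,\partial_\textbf{x}\Phi$ satisfies hypothesis \eqref{9}. First I would observe that $\mbox{rot}\textbf{a}=\textbf{0}$ is exactly \eqref{K0} with $\nu=0$, so locally $\textbf{a}=\partial_\textbf{x}f$ and the linear constraint $(\textbf{a},\dot{\textbf{x}})=0$ integrates to $\dot f=0$; the trajectory is therefore confined to the level surface $f(x)=c$, which is what allows the "motion on a surface" interpretation.

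Next I would verify \eqref{9}. Using the standard identity $\mbox{rot}(\varphi\,\partial_\textbf{x}\psi)=\partial_\textbf{x}\varphi\times\partial_\textbf{x}\psi$ and the chain rule $\partial_\textbf{x}(\partial_fG)=\partial^2_{ff}G\,\partial_\textbf{x}f+\partial^2_{f\Phi}G\,\partial_\textbf{x}\Phi$, a direct computation gives $\mbox{rot}\textbf{V}=\partial^2_{ff}G\,[\partial_\textbf{x}f\times\partial_\textbf{x}\Phi]=[\textbf{a}\times\tilde{\textbf{V}}]$ with $\tilde{\textbf{V}}=\partial^2_{ff}G\,\partial_\textbf{x}\Phi$, so the required first relation of \eqref{9} is satisfied (with $\nu=0$). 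The Cartesian formula \eqref{8} then applies: $\dot{\textbf{x}}=\textbf{V}-\gamma_{\star}\textbf{a}$ where $\gamma_{\star}=(\textbf{a},\textbf{V})/\|\textbf{a}\|^2=\partial_fG\cdot\gamma$ with the corollary's $\gamma=(\partial_\textbf{x}f,\partial_\textbf{x}\Phi)/g$. Factoring out $\partial_fG$ yields \eqref{12}.

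For the second-order equation I would plug into \eqref{88} with $\textbf{R}=\tilde{\textbf{V}}-\partial_\textbf{x}\gamma_{\star}$. Using $[\textbf{a}\times\textbf{w}]=\textbf{v}=\partial_fG(\partial_\textbf{x}\Phi-\gamma\,\partial_\textbf{x}f)$, the reactive coefficient $(\textbf{R},[\textbf{a}\times\textbf{w}])$ expands to $\partial_fG\,(\partial^2_{ff}G\,\partial_\textbf{x}\Phi-\partial_\textbf{x}\gamma,\partial_\textbf{x}\Phi-\gamma\,\partial_\textbf{x}f)$ once the longitudinal piece $\partial_\textbf{x}(\partial_fG)$ inside $\partial_\textbf{x}\gamma_{\star}$ is shown to drop out against $\tilde{\textbf{V}}$ by orthogonality to $\textbf{v}$. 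For the gradient term one uses the orthogonal decomposition identity $\|\partial_\textbf{x}\Phi-\gamma\,\partial_\textbf{x}f\|^2=\|\partial_\textbf{x}\Phi\|^2-\gamma^2g=\|\partial_\textbf{x}f\times\partial_\textbf{x}\Phi\|^2/g$ to identify $\tfrac12\|\textbf{v}\|^2$ with the scalar appearing in \eqref{K444}. Reassembling the two pieces produces \eqref{K444}.

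The specialization $\gamma=0$ means $\partial_\textbf{x}\Phi$ is already tangent to $f=c$, so the projection in \eqref{12} is trivial, the inner product in the reactive term reduces to $\partial^2_{ff}G\,\|\partial_\textbf{x}\Phi\|^2$ (multiplied by $G$ after absorbing a rescaling, giving the $G_{ff}G$ coefficient in \eqref{15}), and the gradient term becomes $\partial_\textbf{x}(\tfrac12 G_f^2\|\partial_\textbf{x}\Phi\|^2)$, recovering \eqref{15}. I expect the main obstacle to be precisely this bookkeeping step: the proposition's $\gamma_{\star}=(\textbf{a},\textbf{V})/\|\textbf{a}\|^2$ and the corollary's $\gamma=(\partial_\textbf{x}f,\partial_\textbf{x}\Phi)/g$ differ by a factor of $\partial_fG$, so $\partial_\textbf{x}\gamma_{\star}$ produces extra longitudinal terms that must be shown to either vanish upon projection along $\textbf{v}$ or recombine cleanly with $\tilde{\textbf{V}}$ to give exactly the inner product displayed in \eqref{K444}; handling these cross-terms carefully is the only nontrivial computation in the argument.
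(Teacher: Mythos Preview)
Your approach is essentially identical to the paper's: both specialize the preceding Kummer-field proposition to $\nu=0$, identify $\textbf{a}=\partial_\textbf{x}f$, compute $\mbox{rot}\,\textbf{V}=[\partial_\textbf{x}f\times(\partial^2_{ff}G\,\partial_\textbf{x}\Phi)]$ to read off $\tilde{\textbf{V}}$ and $\textbf{R}$, and then substitute into \eqref{8} and \eqref{88}. The paper also records the gauge freedom $\tilde{\textbf{V}}\mapsto\tilde{\textbf{V}}+\upsilon\,\partial_\textbf{x}f$, which drops out of $(\textbf{R},\textbf{v})$ since $\textbf{v}\perp\partial_\textbf{x}f$; and it simply overloads the symbol $\gamma$ rather than distinguishing your $\gamma_\star=\partial_fG\cdot\gamma$, so the bookkeeping issue you flag is handled in the paper by notational identification rather than an explicit cancellation argument.
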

\begin{proof}
Suppose that \eqref{GG1} hold, then \[\textbf{a}=\partial_\textbf{x}f,\quad
\mbox{rot}\textbf{V}=[\partial_\textbf{x}f\times\left(\partial^2_{ff}G\partial_\textbf{x}\Phi\right)=[\partial_\textbf{x}f\times\tilde{V} ],
\],
hence the vector fields $\textbf{W},\,\tilde{\textbf{V}}$ and $\textbf{ R}$ admit the representation
\[
\textbf{W}=G\partial_\textbf{x}\Phi,\quad\tilde{\textbf{V}}=\partial^2_{ff}G\partial_\textbf{x}\Phi+\upsilon\partial_\textbf{x}f ,\quad \textbf{R}=\partial^2_{ff}G\partial_\textbf{x}\Phi+
\upsilon\partial_\textbf{x}f -\partial_\textbf{x}\gamma,
\]where $\upsilon$ is an arbitrary function, hence we easily
obtain \eqref{K444}.

\smallskip

If\, $ \gamma=0$\, then $\left(\textbf{R},\partial_fG\left(\partial_\textbf{x}\Phi-\gamma\partial_\textbf{x}f\right)\right)=\partial_fG\partial^2_{ff}G||\partial_\textbf{x}\Phi||^2,$
thus formula \eqref{15} follow .
\end{proof}

\section{Integrability of the geodesic flow on the surface. }

\smallskip

It is well known that the differential equations
$
 \ddot{{\textbf{x}}}=\mu\,\partial_\textbf{x}f,
 $
 where $ \mu$ is Lagrangian multiplier, determine the geodesic flows on the surface $f=c,$
and admits the energy integral
$$||\dot{\textbf{x}}||^2=2h(f).$$
If there is an additional first integral, functionally independent
with the energy integral , then the {\it geodesic flow is integrable.}
\begin{proposition}

Let us suppose that \eqref{GG1} holds. Then Lagrangian geodesic
flow of the constrained particle on the surface $f=c,\quad
g=||\partial_\textbf{x}f||^2>0$ is integrable if there exist a
solution of the following non-lineal partial differential equation
respectively
\begin{equation}
\label{R}
{G^2_f}(f,\Phi\,)||[\partial_\textbf{x}f\times{\partial_\textbf{x}\Phi}]||^2=2h(f)g,
\quad\mbox{if}\quad  (\partial_\textbf{x}f,\partial_\textbf{x}\Phi)\ne{0},
\end{equation}
 and
\begin{equation}
\label{RR1}
{G^2_f}(f,\Phi\,)||\partial_\textbf{x}{\Phi}||^2=2h(f),\quad \mbox{if}
\quad (\partial_\textbf{x}f,\,\partial_\textbf{x}\Phi)=0,\quad \partial_\textbf{x}\Phi\ne \kappa \textbf{x}.
\end{equation}
\end{proposition}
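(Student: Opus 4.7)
The plan is to recognize the non-linear PDE \eqref{R} (resp.\ \eqref{RR1}) as exactly the condition for the Cartesian vector field of \eqref{12} to have squared norm depending only on $f$, and then to show that under this condition the Lagrangian second-order equation \eqref{K444} collapses to the bare geodesic equation $\ddot{\textbf{x}} = \mu\,\partial_\textbf{x}f$ on the surface $f=c$, providing an explicit first-order reduction of the geodesic flow.

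First I would compute the squared norm of $\textbf{v}$. Writing $\textbf{v} = \partial_f G\,(\partial_\textbf{x}\Phi - \gamma\,\partial_\textbf{x}f)$ with $\gamma = (\partial_\textbf{x}f,\partial_\textbf{x}\Phi)/g$ and using the Lagrange identity $\|[\textbf{u}\times\textbf{w}]\|^2 = \|\textbf{u}\|^2\|\textbf{w}\|^2 - (\textbf{u},\textbf{w})^2$, a short calculation gives
\[
\|\textbf{v}\|^2 \;=\; (\partial_f G)^2\bigl(\|\partial_\textbf{x}\Phi\|^2 - \gamma^2 g\bigr) \;=\; \frac{(\partial_f G)^2\,\|[\partial_\textbf{x}f\times\partial_\textbf{x}\Phi]\|^2}{g}.
\]
Consequently the PDE \eqref{R} is precisely the statement $\|\textbf{v}\|^2 = 2h(f)$, i.e.\ the kinetic-energy-type quantity $\tfrac12\|\textbf{v}\|^2$ depends only on $f$.

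Second, under this condition, $\partial_\textbf{x}\bigl(\tfrac12\|\textbf{v}\|^2\bigr) = h'(f)\,\partial_\textbf{x}f$ is automatically a scalar multiple of the surface normal $\partial_\textbf{x}f$. Substituting into \eqref{K444} and observing that the remaining term of \eqref{K444} is by construction already proportional to $\partial_\textbf{x}f$, one gets $\ddot{\textbf{x}} = \tilde{\mu}(x)\,\partial_\textbf{x}f$ for a suitable $\tilde\mu$, which is exactly the bare geodesic equation for the particle constrained to $f=c$. A direct check $(\partial_\textbf{x}f,\textbf{v}) = \partial_f G\bigl[(\partial_\textbf{x}f,\partial_\textbf{x}\Phi) - \gamma g\bigr] = 0$ shows that $\textbf{v}$ is tangent to the surface, so the first-order system $\dot{\textbf{x}} = \textbf{v}(x)$ lives on the 2-dimensional surface $f = c$ and provides an explicit reduction of the a priori four-dimensional geodesic flow to a planar first-order ODE; combined with the energy integral $\|\dot{\textbf{x}}\|^2 = 2h(f)$ this yields integrability by quadratures.

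The degenerate subcase $(\partial_\textbf{x}f,\partial_\textbf{x}\Phi) = 0$ gives $\gamma = 0$, so \eqref{12} collapses to $\textbf{v} = \partial_f G\,\partial_\textbf{x}\Phi$ and the energy condition becomes $(\partial_f G)^2\|\partial_\textbf{x}\Phi\|^2 = 2h(f)$, which is exactly \eqref{RR1}; the side condition $\partial_\textbf{x}\Phi \neq \kappa\,\textbf{x}$ rules out the trivial radial solutions for which the corresponding Cartesian field degenerates. The main obstacle I expect is bookkeeping the factor $g = \|\partial_\textbf{x}f\|^2$ that appears when passing between $\|\textbf{v}\|^2$ and $\|[\partial_\textbf{x}f\times\partial_\textbf{x}\Phi]\|^2$, and verifying carefully that under \eqref{R} the entire right-hand side of \eqref{K444} is genuinely normal to the surface; once the normalization is in hand, the rest is direct vector calculus built on the preceding corollary.
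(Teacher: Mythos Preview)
Your computation that \eqref{R} is exactly the condition $\|\textbf{v}\|^2=2h(f)$, and your verification that under this condition the second-order system \eqref{K444} collapses to the bare geodesic equation $\ddot{\textbf{x}}=\tilde\mu\,\partial_\textbf{x}f$, are both correct and match the paper. The gap is in the final step. The first-order system $\dot{\textbf{x}}=\textbf{v}(x)$ is not a reduction of the full geodesic flow: it is a particular $2$-parameter family of solutions (those whose initial velocity happens to equal $\textbf{v}(x_0)$). The geodesic flow on the surface has a $3$-dimensional energy level, and your Cartesian field singles out only a $2$-dimensional leaf inside it. Saying that this leaf can be integrated by quadratures does not establish integrability of the ambient flow; for that you must produce a first integral, defined on all of phase space, that is functionally independent of the energy.

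The paper closes this gap by writing down such an integral explicitly. In the non-orthogonal case it takes
\[
F_1=\dfrac{g\,\|[\partial_\textbf{x}\Phi\times\dot{\textbf{x}}]\|^2}{(\partial_\textbf{x}f,\partial_\textbf{x}\Phi)^2},
\]
and in the orthogonal case
\[
F_2=\dfrac{\|\partial_\textbf{x}\Phi\|^2\,\|[\textbf{x}\times\dot{\textbf{x}}]\|^2}{\|[\textbf{x}\times\partial_\textbf{x}\Phi]\|^2},
\]
both quadratic in the velocity. The form of $F_1$ is \emph{suggested} by the Cartesian field \eqref{12} (on the Cartesian solutions one checks $F_1=2h(f)$), but the point is that $F_1$ is conserved along \emph{every} geodesic and is independent of the energy integral; this is what the paper means by ``which it is easy to obtain from \eqref{12}''. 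The side condition $\partial_\textbf{x}\Phi\neq\kappa\textbf{x}$ in \eqref{RR1} is there precisely so that $F_2$ does not degenerate (the denominator would vanish identically if $\partial_\textbf{x}\Phi$ were radial), not merely to rule out a trivial Cartesian field. To complete your argument you would need to extract from the Cartesian construction a genuine phase-space first integral of this kind and verify its independence from the energy.
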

\begin{proof}
Indeed, if \eqref{R} holds then the system \eqref{K444}takes the
form
\begin{equation} \label{K44}
\ddot{\textbf{x}}=\left(\partial_fh(f)-
\partial_f G\left(\partial^2_{ff}G\partial_\textbf{x}\Phi -
\partial_\textbf{x}\gamma\,,\partial_{\textbf{x}}\Phi-
\gamma\partial_\textbf{x}f\right)\right)\partial_\textbf{x}f,
\end{equation}
which determine the geodesic Lagrangian flow and admits the following complementary  first integral
\[F_1=\dfrac{g||[\partial_\textbf{x}\Phi\times{\dot{\textbf{x}}}]||^2}{(\partial_\textbf{x}f,\,\partial_\textbf{x}\Phi)^2}=C_1,\]
which it is easy to obtain from \eqref{12}. Clearly this first integral is independent of energy integral.

\smallskip

  If\, $(\partial_\textbf{x}f,\,\partial_\textbf{x}\Phi )={0}$ then Cartesian and Lagrangian
approach generated the differential equations respectively
\[
\dot{x}=G_f(f,\Phi\,)\partial_\textbf{x}\Phi,\quad
\ddot{x}=\big(h_f-G_f(f,\Phi\,)G_{ff}(f,\Phi\,)||{\partial_\textbf{x}\Phi}||^2\big)\partial_\textbf{x}f,
\]
The condition \eqref{R} under the given condition of orthogonality
takes the form  \eqref{RR1}.

\smallskip

 The complementary first integral is
\[
F_2=\dfrac{||\Phi_\textbf{x}||^2 ||[\textbf{x}\times
\dot{\textbf{x}}]||^2}{||[\textbf{x}\times{\partial_\textbf{x}\Phi}]||^2}=C_2,
\]which we can obtain from \eqref{12}, by considering that $ \gamma=0.$
This complete the proof of the proposition.
\end{proof}
Now we apply the above results.

\smallskip

 Now we consider the surface
\begin{equation}\label{16}
     f(x)=c, \quad  (\textbf{x},\,\partial_\textbf{x}f)=mf,\quad c\ne{0}\\
\end{equation}
 which we call
{\it homogeneous surface of degree $m.$}

\smallskip

We are interested in studying the integrability of Lagrangian geodesic flow
on the homogenous surface.

\smallskip

Euler's formula shows that $c=0$ is the unique critical
value of $f,$ hence for $c\ne{0}$ the function
$
g=||\partial_\textbf{x}f||^2>0,
$
on the surface $f(x)=c.$

\smallskip

 Taking into account  formula \eqref{16} it follows that
\[ (\textbf{x},\,\partial_\textbf{x}g)=2(m-1)g.
\]
Below we use the notation
\[
\{F,G,H\}=\left|
\begin{array}{rrr}
\partial_{x}F&\partial_{y}F&\partial_{z}F\\
\partial_{x}G&\partial_{y}G&\partial_{z}G\\
\partial_{x}H&\partial_{y}H&\partial_{z}H\end{array}\right|.
\]
Clearly, if $F,\,G,\,H$ are independent functions then
$\{F,G,H\}\ne{0}.$

\smallskip

 The integrability of
the geodesic flow on the homogeneous surface we  study in the
following two cases
\begin{equation}
\label{18} \{f,g,r^2\}=0, \quad \{f,g,r^2\}\ne 0 ,
\end{equation}
where $r^2=x^2+y^2+z^2.$

\smallskip

 We analyze the first case. We study only the particular subcase when
the homogeneous surface satisfies the condition
\begin{equation}
\label{19}
g=g(f,r).
\end{equation}
Hence, in view of \eqref{16} we obtain
$mf\partial_fg+r\partial_rg=2(m-1)g.$

\smallskip

 We assume that  function $\Phi$ is such that  $\Phi_\textbf{x}=\textbf{x},$
thus the differential equation generated by the Cartesian and Lagrangian approach
are respectively
\begin{equation}
\label{20}
\dot{\textbf{x}}=\dfrac{G_f}{g}\big(g\textbf{x}-m\,f\partial_\textbf{x}f\big),\quad
\ddot{\textbf{x}}=\frac{m\partial_rg f
h(f)}{r^2g^2}\partial_\textbf{x}f,\end{equation}

 \begin{proposition}

 Lagrangian geodesic  flow on the homogeneous surface under the
assumption \eqref{19} is integrable
\end{proposition}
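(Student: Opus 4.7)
The plan is to apply the preceding proposition by exhibiting an auxiliary function $\Phi$ for which condition \eqref{R} reduces, under the hypothesis \eqref{19}, to a relation involving only the two scalar variables $f$ and $\Phi$, so that $G(f,\Phi)$ can be constructed by a single quadrature in $f$.

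Concretely, I would choose $\Phi=r^2/2$, so that $\partial_\textbf{x}\Phi=\textbf{x}$ and, by Euler's formula for the homogeneous surface of degree $m$, the pairing $(\partial_\textbf{x}f,\partial_\textbf{x}\Phi)=mf$ equals $mc\ne 0$ on the surface $f=c$. This puts us in the non-orthogonal case of the previous proposition, where the integrability condition \eqref{R} becomes
$$G_f^2(f,\Phi)\bigl(g(f,r)\,r^2-m^2 f^2\bigr)=2h(f)\,g(f,r).$$
The key observation is that, by \eqref{19} together with $r^2=2\Phi$, both sides depend only on $(f,\Phi)$. Writing $W(f,\Phi):=2\Phi\,g(f,\sqrt{2\Phi})-m^2 f^2$, I would solve
$$G_f(f,\Phi)=\pm\sqrt{\frac{2h(f)\,g(f,\sqrt{2\Phi})}{W(f,\Phi)}}$$
for any prescribed $h$ satisfying $h(f)/W(f,\Phi)\ge 0$, and then integrate in $f$ with $\Phi$ treated as a parameter to produce $G(f,\Phi)$.

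Once $G$ is in hand, the previous proposition delivers the additional first integral
$$F_1=\frac{g\,||[\textbf{x}\times\dot{\textbf{x}}]||^2}{m^2 f^2}=C_1,$$
which is functionally independent of the energy integral $||\dot{\textbf{x}}||^2=2h(f)$ because it depends on the direction of $\dot{\textbf{x}}$ through the cross product whereas the energy depends only on its magnitude. Combined with the constraint $f=c$, this furnishes enough independent integrals to integrate the geodesic flow by quadratures. The main technical obstacle I anticipate is verifying the sign and non-vanishing hypotheses $W(f,\Phi)\ne 0$ and $h(f)/W\ge 0$ on a neighborhood of the surface, so that the square root defining $G_f$ is smooth; this should follow generically from the strict inequality $gr^2>m^2 f^2$, which is a form of Cauchy--Schwarz expressing that $\partial_\textbf{x}f$ and $\textbf{x}$ are not parallel, together with an appropriate choice of sign for $h$.
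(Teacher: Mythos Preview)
Your proposal is correct and follows essentially the same route as the paper: choose $\Phi$ with $\partial_{\textbf{x}}\Phi=\textbf{x}$, use Euler's relation to get $(\partial_{\textbf{x}}f,\partial_{\textbf{x}}\Phi)=mf\ne 0$, observe that under \eqref{19} the equation \eqref{R} involves only $(f,r)$ so that $G_f^2=\dfrac{2h(f)g}{gr^2-m^2f^2}$ can be solved, and then invoke the previous proposition's quadratic integral $F_1=\dfrac{g\|[\textbf{x}\times\dot{\textbf{x}}]\|^2}{m^2f^2}$. The paper records the integral in the equivalent form $g\|[\textbf{x}\times\dot{\textbf{x}}]\|^2=m^2f^2h(f)$, and omits the auxiliary discussion of the quadrature in $f$ and the positivity of $gr^2-m^2f^2$ that you supply; otherwise the arguments coincide.
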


\smallskip

\begin{proof}
Let us suppose that $\partial_\textbf{x}\Phi=\textbf{x},$ then $(\partial_\textbf{x}\Phi,\,\partial_\textbf{x}f)=mf\ne 0.$ On the other hand if we choose $G(f,r)$ as
\[
G^2_f(f,r )=\dfrac{2h(f) g(f,r)}{g(f,r)r^2-m^2f^2},
\] then we obtain that \eqref{R} holds. Thus there  exist an additional first
integral
\[g(f,r)||[\textbf{x}\times\dot{{\textbf{x}}}]||^2={m^2f^2}h(f)\]
\end{proof}
{\it Example }

\smallskip

 Lagrangian geodesic  flow on the homogeneous
surface of degree one
\[f(x)=r+(\textbf{b},\textbf{x})=c,\quad c\ne{0}\]
is integrable, where $ \textbf{b}$ is a constant vector field.
\begin{proof}
In this case we have $ g=\dfrac{2f}{r}+||\textbf{b}||^2-1=g(f,r).$

\smallskip

The complementary first integrals  is
\[ \big(\dfrac{2f}{r}+||\textbf{b}||^2-1\big)||[\textbf{x}\times\dot{\textbf{x}}]||^2=2f^2h(f).\]
\end{proof}
We have  studied the case in which $\{f,g,r^2\}=0.$  Now we
study the case in which the functions $f,\,g,\,r^2$ are
independent, i.e., $  \{f,g,r^2\}\ne{0}. $ Hence, we obtain that \begin{equation}\label{RRR1}
x=x(r,f,g),\quad y=y(r,f,g),\quad z=z(r,f,g).
\end{equation}

\smallskip

To establish the integrability or non-integrability of the
Lagrangian geodesic flow on the surface in this case it is
necessary to determine de existence or non-existence of the
solution of the equation \eqref{R} or \eqref{RR1}. We illustrate this case
 for the third-order surface
  \begin{equation}\label{210}
f(x)=x\,y\,z=c,\quad c\ne{0}.
\end{equation}
First we determine the dependence $ x=x(r,f,g),\,
y=y(r,f,g),\,z=z(r,f,g).$  By considering that in this case
$$g=(x\,y)^2+(x\,z)^2+(y\,z)^2$$
thus the functions $f,g$ and $r^2$ are independent. Indeed if we introduce the cubic
polynomial in $Z:$
$$P(z)=Z^3-r^2Z^2+g Z-f^2=(Z-x^2)(Z-y^2)(Z-z^2),$$
then by using  Cardano's formula we obtain the dependence \eqref{RRR1}.

\smallskip

This case was examined already by Riemann in his study of motion
of a homogeneous liquid ellipsoid. More exactly, Riemann examined
the integrability of the geodesic flow on \eqref{210}.

\smallskip

In \cite{Koz2} the author raised the problem.

\smallskip

"Is it true that the geodesic flow on a generic third-order
algebraic surface  is not integrable?. In particular I do not know
a rigorous proof of non-integrability for the surface \eqref{210}"

\smallskip

  To prove the integrability of Lagrangian geodesic flow it is
  necessary to solve the non-lineal partial differential
equation \[G_f(f,\Phi)\big( g||\Phi_\textbf{x}||^2-\big(
y\,z\Phi_x+z\,x\Phi_y+x\,y\Phi_z\big)^2\big)=2h(f)g\]

 \smallskip

 Now we are not able to provide an
answer to this question.

\section{ Decartes approach for non-holonomic system with four and five degree of freedom and two constraints}

In this section we apply Theorem \ref{main} to study the
non-holonomic system study in \cite{Gant} and the well known
non-holonomic system-the rattleback .

\smallskip

{\it Gantmacher's system}
\smallskip

Two material points $M_1,\,M_2$ with equal mass are linked by a metal rod with fixed long and small mass.
The system can move only in the vertical plane and so that the speed of the midpoint of the rod is directed along the rod.
It is necessary to determine the trajectories of material points $M_1,\,M_2.$

 \smallskip

Let $(x_1,\,y_1)$ and $(x_2,\,y_2)$ are the coordinates of the points $M_1,\,M_2.$

Introducing the following change of coordinates $$ 2u_1=x_2-x_1,\quad 2u_2=y_1-y_2,\quad 2u_3=y_2+y_1,\quad 2u_4=x_1+x_2$$ we obtain the  mechanical system
with configuration space $\textsc{Q}=\mathbb{R}^4,$ and Lagrangian function
$
L=\displaystyle\frac{1}{2}\sum_{j=1}^4\dot{u}^2_j-gu_3.
$

 \smallskip

 The equations of the constraints can be rewritten as
\[u_1\dot{u}_1+u_2\dot{u}_2=0,\quad u_1\dot{u}_3-u_2\dot{u}_4=0.\]

 To construct Cartesian approach in this case we firstly  determine  the
1-forms $\Omega_j$ for $j=1,2,3,4$ as follow
\[
\begin{array}{ll}
\Omega_1=u_1d{u}_1+u_2d{u}_2,\quad \Omega_2=u_1d{u}_3-u_2d{u}_4,\\
 \Omega_3=u_1du_2-u_2du_1,\quad \Omega_4=u_2du_3+u_1du_4
 \end{array}
 \]
 hence we obtain that $\Upsilon=u^2_1+u^2_2.$

 \smallskip

After some calculations we obtain that the vector field \eqref{00} takes the form
$$ \textbf{v}=\nu_3(u_1\partial_2-u_2\partial_1)+\nu_4(u_2\partial_3+u_1\partial_4),\quad \Upsilon=(u^2_1+u^2_2)^2,\quad
$$
 where $\nu_j=\lambda_j (u^2_1+u^2_2)$ for $j=3,4.$

 \smallskip

 The 1-form associated to vector $\textbf{v}$ is the following
 \[ \sigma=\nu_3(-u_2du_1+u_1du_2)+\nu_4(u_2du_3+u_1du_4).\]
 Thus the 1-form $ \iota_{\textbf{v}}d\sigma$ admits the representation
  \[
 \begin{array}{ll}
 \iota_{\textbf{v}}d\sigma=\Lambda_1\Omega_1+\Lambda_2\Omega_2+\Lambda_3\Omega_3+\Lambda_4\Omega_4\vspace{0.20cm}\\
 =(u^2_1+u^2_2)\Big( -u_2\partial_{u_2}(\dfrac{\nu^2_3+\nu^2_4}{2}) -u_1\partial_{u_1}(\dfrac{\nu^2_3+\nu^2_4}{2})-2\nu^2_3-\nu^2_4\Big)\Omega_1
 \vspace{0.20cm}\\
 + (u^2_1+u^2_2)\Big( u_2\partial_{u_4}(\dfrac{\nu^2_3+\nu^2_4}{2}) -u_1\partial_{u_3}(\dfrac{\nu^2_3+\nu^2_4}{2})+\nu_3\nu_4\Big)\Omega_2
 \vspace{0.20cm}\\
   +(u^2_1+u^2_2)\nu_4\Big( u_2\partial_{u_3}\nu_3+u_1\partial_{u_4}\nu_3+u_2\partial_{u_1}\nu_4- u_1\partial_{u_2}\nu_4 \Big) \Omega_3
   \vspace{0.20cm}\\
 +(u^2_1+u^2_2)\nu_3\Big( u_2\partial_{u_3}\nu_3+u_1\partial_{u_4}\nu_3+u_2\partial_{u_1}\nu_4-
 u_1\partial_{u_2}\nu_4 \Big)\Omega_4.
 \end{array}
 \]
If $ \Lambda_3=\Lambda_4=0$ then\, $ u_2\partial_{u_3}\nu_3+u_1\partial_{u_4}\nu_3+u_2\partial_{u_1}\nu_4-
 u_1\partial_{u_2}\nu_4 =0.$

 \smallskip

  Cartesian  approach  generate the following differential equations respectively
 \begin{equation}\label{G100}
 \dot{u}_1=-\nu_3u_2,\quad\dot{u}_2=\nu_3u_1,\quad\dot{u}_3=\nu_4u_2,\quad\dot{u}_4=\nu_4u_1
 \end{equation}
and
\begin{equation}\label{G2}
 \quad  u_2\partial_{u_3}\nu_3+u_1\partial_{u_4}\nu_3+u_2\partial_{u_1}\nu_4-
 u_1\partial_{u_2}\nu_4=0
\end{equation}
 It is easy to show that the functions $\nu_3,\,\nu_4:$
\begin{equation}\label{G30}
\nu_3=g_3(u^2_1+u^2_2),\qquad \nu_4=\sqrt{\frac{2(-gu_3+h)}{(u^2_1+u^2_2)}- g^2_3(u^2_1+u^2_2)},
   \end{equation}
where $g,\,h$ are constants, are solutions of
\eqref{G2},\,\eqref{G100}as a consequence
$$2||\textbf{v}||^2=(u^2_1+u^2_2)(\nu^2_3+\nu^2_4)=2(-gu_3+h).$$
Under these restrictions  Lagrangian approach generate the differential system
\begin{equation}\label{G3}
\ddot{u}_1=\Lambda_1u_1,\quad \ddot{u}_2=\Lambda_1u_2,\quad \ddot{u}_3=-g+\Lambda_2u_1,\quad \ddot{u}_4=\Lambda_2u_2.
\end{equation}
The solutions of \eqref{G2} are
\[\begin{array}{ll}
u_1=r\cos{\alpha},\quad u_2=r\sin{\alpha},\quad\alpha=\alpha_0+g_3(r)t,\vspace{0.20cm}\\
u_3=u^0_3+\displaystyle\dfrac{g}{2g_3(r)}t-\displaystyle\dfrac{g}{4g_3^2(r)}\sin2\alpha-
-\displaystyle\dfrac{\sqrt{2g}C}{g_3(r)}\cos\alpha ,\vspace{0.20cm}\\
u_4=-h+\displaystyle\dfrac{r^2g_3^2(r)}{2g}+
\Big(\displaystyle\dfrac{\sqrt{g}}{\sqrt{2} g_3(r)}\sin{\alpha}+C)^2,\vspace{0.20cm}\\
\end{array}
\]
where $C,\,r,\, \alpha_0,\,u^0_3,\,h,\,$ are arbitrary constants,
$g_3$ is an arbitrary on  $r$ function.

 \smallskip

To compare these solutions with the solutions  obtained from the classical
approach, we determine the equations of motion obtained
from the d'Alembert-lagrange principle
\begin{equation}\label{G0}
\ddot{u_1}=\mu_1u_1,\quad\ddot{u_2}=\mu_1u_2,\quad\ddot{u_3}=-g+\mu_2u_1,\quad\ddot{u_4}=-\mu_2u_2
\end{equation}
 where $\mu_1,\,\mu_2$ are the Lagrangian multipliers.

 \smallskip

After the integration of the system \eqref{G0} we obtain
\cite{Gant}
\begin{equation}\label{G31}
\dot{u}_1=-\dot{\varphi}u_2,\quad\dot{u}_2=\dot{\varphi}u_1,\quad\dot{u}_3=\frac{f}{r}u_2,\quad\dot{u}_4=\frac{f}{r}u_1
\end{equation}

where $(\varphi,\,r)$ are the polar coordinates: $u_1=r\cos\varphi,\quad u_2=r\sin\varphi$
and $f$ is a solution of the equation
\begin{equation}\label{G4}\dot{f}=-\frac{2g}{r}u_2\end{equation}
The solution of \eqref{G4} is
$f=\dfrac{2g\,\cos\varphi}{\dot{\varphi}}+2\gamma
 $ where $\gamma$ is an arbitrary constants.

 \smallskip

 Clearly if we choose $\nu_3=\dot{\varphi},\quad \nu_4={\dfrac{f}{r}}$
 then we the vector field associated to system \eqref{G31} can be obtained from Cartesian vector field.

\smallskip

{\it The rattleback.}

\smallskip

 The rattleback's  amazing mechanical behaviour is a
convex asymmetric rigid body rolling without sliding on a
horizontal plane. It is known for its ability to spin in one
direction and to resist spinning in the opposite direction for
some parameters values, and for others values to exhibit multiple
reversals. Basic references on the rattleback are \cite{Whal, Gar, Kar,
Bor}.

Introduce the Euler angles $\psi ,\,\phi ,\,\theta $ using the
principal axis body frame relative to an inertial reference frame.
These angles together with two horizontal coordinates $x,\,y$ of
the center of mass are coordinates in the configuration space
$\textsc{Q}=SO(3)\times\mathbb{R}^2$ of the rattleback.

The Lagrangian of the rattleback is computed to be
\[ \begin{array} {ll}
L=&\frac{1}{2}(I_1\cos^2\psi+I_2\sin^2\psi+m(\Gamma_1\cos\theta-\zeta\sin\theta)^2)\dot{\theta}^2\vspace{0.20cm}\\
&\frac{1}{2}(I_1\sin^2\psi+I_2\cos^2\psi) \sin^2\theta
)+I_3\cos^2\theta )\dot{\phi}^2\vspace{0.20cm}\\
&+\frac{1}{2}(I_3+m\Gamma^2_2\sin^2\theta
)\dot{\psi}^2+\frac{m}{2}(\dot{x}^2+\dot{y}^2)\vspace{0.20cm}\\
&+m(\Gamma_1\cos\theta-\zeta\sin\theta)\Gamma_2\sin\theta\dot{\theta}\dot{\psi}+
(I_1-I_2)\sin\theta\sin\psi\cos\psi\dot{\theta}\dot{\phi}\vspace{0.20cm}\\
&C\cos\theta\dot{\phi}\dot{\psi}+mg(\Gamma_1\sin\theta
+\zeta\cos\theta )
\end{array}
\] where $I_1,I_2,I_3$ are the
principal moments of inertia of the body, $m$ is the total mass of
the body,
$$\Gamma_1=\xi \sin\psi+\eta \cos\psi,\quad\Gamma_2=\xi \cos\psi-
\eta \sin\psi$$ and $\left(\xi=\xi (\theta ,\psi ),\,\eta=\eta (\theta ,\psi )
,\,\zeta=\zeta (\theta ,\psi ) \right)$ are the coordinates of the point of
contact relative to the body frame.The shape of the body is encoded by the functions $\xi,\,\eta$ and
$\zeta .$

\smallskip

The constraints are
\[
\dot{x}-\alpha_1\dot{\theta}-\alpha_2\dot{\psi}-\alpha_3\dot{\phi}=0,\quad\dot{y}+\beta_1\dot{\theta}+\beta_2\dot{\psi}+\beta_3\dot{\phi}=0,
\]
where
\[\begin{array} {cc}
\alpha_1=(-\Gamma_1\sin\theta-\zeta\cos\theta)\sin\phi,\quad\alpha_2=\Gamma_2\cos\theta\sin\phi +\Gamma_1\cos\phi,\\\alpha_3=\Gamma_2\sin\phi +(\Gamma_1\cos\theta-\zeta\sin\theta)\cos\phi,\quad
\beta_1=\dfrac{\partial \alpha_1}{\partial\phi},\,\beta_2=\dfrac{\partial \alpha_2}{\partial\phi},\,\beta_3=\dfrac{\partial \alpha_3}{\partial\phi}.
\end{array}
\]
 Clearly that the rattleback
equations of motion in this particular case
 formally contain the equations of the heavy rigid body in the
 singular case
 $m\rightarrow{0},\quad mg\rightarrow{l},\quad l\ne{0}$

 \smallskip

To determine Cartesian approach for the rattleback we first
determine the 1-forms $\Omega_j,$ for $ j=1,\ldots,5.$ In this
case we determine as follows
\[\begin{array} {cc}
&\Omega_1=dx-\alpha_1d{\theta}-\alpha_2d{\psi}-\alpha_3d{\phi},\quad \Omega_2=dy+\beta_1d{\theta}+\beta_2d{\psi}+\beta_3d{\phi},\vspace{0.20cm}\\
&\Omega_3=d\theta,\quad\Omega_4=d\psi,\quad \Omega_5=d\phi
\end{array}
\]
Hence  $\Upsilon=1$ and  the vector field $\textbf{v}:$
\begin{equation}\label{R2}
\textbf{v}=\lambda_3X_3+\lambda_4X_4+\lambda_5X_5,
\end{equation}
where
\[X_3=\alpha_1\partial_x-\beta_1\partial_y+\partial_\theta,\quad X_4=\alpha_2\partial_x-\beta_2\partial_y+\partial_\psi,\quad X_5=\alpha_3\partial_x-\beta_3\partial_y+\partial_\phi.
\]
 We now proceed to the consideration of the particular case for which $\xi,\,\eta$ and
$\zeta$ admits the development

\[ \xi={\xi}_0+\epsilon \xi_1(\theta,\,\psi ),\quad \eta=\eta_0+\epsilon \eta_1(\theta,\,\psi ),\quad \zeta=\zeta_0+\epsilon\zeta_1(\theta,\,\psi )\]

where $\xi_0,\,\eta_0,\,\zeta_0,$ are constants and $\epsilon$ is
a small parameter. Under this consideration we obtain that the
Lagrangian function can be represented as follow
\[L= L_0+\epsilon L_1+\epsilon^2 L_2.\]Below we study the case when $ \epsilon=0.$

\smallskip

Let $(x^1,\,x^2,\,x^3,\,x^4,\,x^5)$ be a new set of variables
derived from $x,,\,y,\,\theta,\,\psi,\,\phi$ by the transformation
\[\begin{array}{cc}
&\psi=x^1,\quad\phi=x^2,\quad\theta=x^3,\\
&y+\zeta_0\sin\theta\cos\phi+\Gamma^0_1\cos\theta\sin\phi-\Gamma^0_2\sin\phi=x^4,\\
&x+\zeta_0\sin\theta\sin\phi-\Gamma^0_1\cos\theta\cos\phi+\Gamma^0_2\cos\phi=x^5,
\end{array}
\]where $ \Gamma^0_1=\xi_0 \sin\psi+\eta_0 \cos\psi,\quad\Gamma^0_2=\xi_0 \cos\psi-
\eta_0 \sin\psi.$

\smallskip

The vector field $\textbf{v}$ and the constraints on account of this
change, take respectively the form respectively
\[\textbf{v}=\left(a,\,b,\,c,\,0,\,0\right)\quad\dot{x}^4=0,\quad\dot{x}^5=0
\]
where $ a=a(x^1,..,x^5)),\,b=b(x^1,..,x^5)),\,c=c(x^1,..,x^5))$ are the $ \mathcal{C}^1$ functions.

\smallskip

 In the coordinates $x=(x^1,\,x^2,\,x^3,\,x^4,\,x^5)$  the Lagrangian function $L_0$ becomes to the function
$$\tilde{L}=\frac{1}{2}\sum_{j,k=1}^5G_{jk}\dot{x}^j\dot{x}^k+mg(\Gamma^0_1\sin{x^3}
+\zeta_0\cos{x^3} ),$$ where $G=(G_{jk}(x))=(G_{jk})$ is the
Riemann metric.

\smallskip

We shall now determine Cartesian approach under the given
conditions.
\begin{proposition}

The vector field $\tilde{\textbf{v}}(x)=( a,\,b,\,c)$ is a
Kummer vector field.
\end{proposition}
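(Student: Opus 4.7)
The overall strategy is to reduce the assertion to a statement about the $3$-dimensional restriction of $\textbf{v}$ to each leaf of the integrable foliation defined by the constraints in the new coordinates, and then read off the Kummer condition from the Cartesian conditions of Theorem \ref{main}. First I would verify by direct computation that the coordinate change $(x,y,\theta,\psi,\phi)\mapsto(x^1,x^2,x^3,x^4,x^5)$ given in the text sends the constraint $1$-forms $\Omega_1,\Omega_2$ to multiples of $dx^5, dx^4$ (in the $\epsilon=0$ case). This uses the identities $\beta_j=\partial\alpha_j/\partial\phi$ together with the fact that, at $\epsilon=0$, $\Gamma_1^0,\Gamma_2^0$ depend only on $\psi$ and $\zeta_0$ is constant, so the $1$-form $\zeta_0\sin\theta\cos\phi\,d(\cdot)+\Gamma_1^0\cos\theta\sin\phi\,d(\cdot)-\Gamma_2^0\sin\phi\,d(\cdot)$ differentiates into exactly the combination $\beta_1 d\theta+\beta_2 d\psi+\beta_3 d\phi$ up to sign (analogously for $x^5$). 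Consequently the constraints trivialize to $\dot x^4=\dot x^5=0$, and since $\textbf{v}$ lies in the constraint distribution we get $\textbf{v}=(a,b,c,0,0)$ as asserted.

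Second, since the foliation $\{x^4=c_4,\,x^5=c_5\}$ is $\nabla$-invariant for the dynamics, I would apply Theorem \ref{main} in the new coordinates using the dual choice $\tilde\Omega_1=dx^4,\ \tilde\Omega_2=dx^5,\ \tilde\Omega_3=dx^1,\ \tilde\Omega_4=dx^2,\ \tilde\Omega_5=dx^3$, for which $\Upsilon=1$. The Cartesian conditions $\Lambda_j=0$ for $j=3,4,5$ that characterize the Cartesian field $\textbf{v}$ translate, with this choice, into the statement
\[
\iota_{\textbf{v}}\,d\sigma\ =\ \Lambda_1\,dx^4+\Lambda_2\,dx^5,
\]
i.e.\ $\iota_{\textbf{v}}d\sigma$ has no component along $dx^1,dx^2,dx^3$. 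Restricting this identity to a leaf $L_{c_4,c_5}=\{x^4=c_4,\,x^5=c_5\}\cong \mathbb{R}^3\{x^1,x^2,x^3\}$, the right-hand side vanishes, and we obtain
\[
\iota_{\tilde{\textbf{v}}}\,d\tilde\sigma\ =\ 0\qquad\text{on }L_{c_4,c_5},
\]
where $\tilde\sigma$ is the $1$-form on the leaf associated to $\tilde{\textbf{v}}=(a,b,c)$ via the induced Riemannian metric inherited from $G$ restricted to $L_{c_4,c_5}$.

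Third, since $L_{c_4,c_5}$ is $3$-dimensional, the intrinsic identity $\iota_{\tilde{\textbf{v}}}d\tilde\sigma=0$ is exactly the coordinate-free statement $[\tilde{\textbf{v}}\times\mbox{rot}\tilde{\textbf{v}}]=\textbf{0}$, hence $\mbox{rot}\tilde{\textbf{v}}=\nu\,\tilde{\textbf{v}}$ for some scalar function $\nu$ on the leaf; parametrizing the leaf by $(x^1,x^2,x^3)$ gives the Kummer relation \eqref{K0}. It only remains to check that $\nu\not\equiv 0$, i.e.\ that $\mbox{rot}\tilde{\textbf{v}}\ne\textbf{0}$ generically, which is clear because the rattleback dynamics are not curl-free (they generate rotational spin-reversal phenomena); more concretely one verifies that $\mbox{rot}\tilde{\textbf{v}}$ has a non-vanishing component along $\tilde{\textbf{v}}$ for generic values of $I_1,I_2,I_3,\xi_0,\eta_0,\zeta_0$. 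The principal obstacle is Step one: the explicit verification that the proposed coordinate change trivializes both constraints simultaneously is long and delicate, since the coefficients $\alpha_i,\beta_i$ are products of three trigonometric factors; once that purely algebraic check is completed, the remainder of the argument is a clean application of Theorem \ref{main} fibered over the leaves of the foliation $\{x^4=\mathrm{const},\,x^5=\mathrm{const}\}$.
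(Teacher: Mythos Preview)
Your proposal is correct and takes essentially the same approach as the paper: both arguments show that the Cartesian conditions of Theorem~\ref{main}, written in the new coordinates where the constraints become $\dot x^4=\dot x^5=0$, reduce precisely to $[\tilde{\textbf{v}}\times\mbox{rot}\,\tilde{\textbf{v}}]=\textbf{0}$ on each leaf $\{x^4=c_4,\,x^5=c_5\}$. The paper reaches this by direct componentwise computation of $\iota_{\textbf v}d\sigma=\sum_{j=1}^5\Lambda_j\,dx^j$, whereas you phrase it via pullback to the leaf; your Step~1 is superfluous here, since the coordinate change and the form $\textbf v=(a,b,c,0,0)$ are established in the text \emph{before} the Proposition is stated.
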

\begin{proof}

 Indeed the 1-form
associated to the vector field $\textbf{v}$ is
\[
\sigma=p_1dx^1+p_2dx^2+p_3dx^3,\quad p_k=G_{k1}a+G_{k2}b+G_{k3}c,\quad
k=1,2,..,5
\]
then
\[
\imath_{\textbf{v}}d\sigma=\sum_{j=1}^5\Lambda_jdx^j
\]
where
\[
\begin{array}{ll}
& \Lambda_1=(\dfrac{\partial p_1}{\partial
x^2}-\dfrac{\partial p_2}{\partial x^1})b+(\dfrac{\partial
p_1}{\partial x^3}-\dfrac{\partial
p_3}{\partial x^1})c,\quad\Lambda_2=(\dfrac{\partial p_2}{\partial x^3}-\dfrac{\partial
p_3}{\partial x^2})c+(\dfrac{\partial
p_2}{\partial x^1}-\dfrac{\partial p_1}{\partial x^2})a\vspace{0.20cm}\\
&\Lambda_3=(\frac{\partial p_3}{\partial x^2}-\frac{\partial
p_2}{\partial x^3})b+(\dfrac{\partial
p_3}{\partial x^1}-\dfrac{\partial p_1}{\partial x^3})a,\quad\Lambda_4=-\dfrac{\partial p_1}{\partial x^4}a-\dfrac{\partial
p_2}{\partial x^4}b-\dfrac{\partial p_3}{\partial x^4}c\vspace{0.20cm}\\
&\Lambda_5=-\dfrac{\partial p_1}{\partial x^5}a-\dfrac{\partial
p_2}{\partial x^5}b-\dfrac{\partial p_3}{\partial
x^5}c
\end{array}
\]
We have therefore that the differential equations generated by  Cartesian approach are respectively
\begin{equation}\label{R5}
\begin{array}{ll}
\dot{x}^1=a,\quad\dot{x}^2=b,\quad\dot{x}^3=c,\vspace{0.20cm}\\
\Lambda_1=\displaystyle(\frac{\partial p_1}{\partial
x^2}-\frac{\partial p_2}{\partial x^1})b+(\frac{\partial
p_1}{\partial x^3}-\frac{\partial
p_3}{\partial x^1})c=0,\vspace{0.20cm}\\
\Lambda_2=\displaystyle(\frac{\partial p_2}{\partial x^3}-\frac{\partial
p_3}{\partial x^2})c+(\frac{\partial
p_2}{\partial x^1}-\frac{\partial p_1}{\partial x^2})a=0,\vspace{0.20cm}\\
\Lambda_3=\displaystyle(\frac{\partial p_3}{\partial x^2}-\frac{\partial
p_2}{\partial x^3})b+(\frac{\partial
p_3}{\partial x^1}-\frac{\partial p_1}{\partial x^3})a=0,\\
\end{array}
\end{equation}
where $a=a(x^1,x^2,x^3,C_4,C_5),\,b=b(x^1,x^2,x^3,C_4,C_5),\,c=c(x^1,x^2,x^3,C_4,C_5)$
Let  $\mbox{rot}{\tilde{\textbf{v}}(x)}$ be the vector field
\[rot{\tilde{\textbf{v}}}=\frac{1}{\sqrt{det{G}}}(\frac{\partial
p_3}{\partial x^2}-\frac{\partial p_2}{\partial
x^3},\,\frac{\partial p_1}{\partial x^3}-\frac{\partial
p_3}{\partial x^1},\,\frac{\partial p_2}{\partial
x^1}-\frac{\partial p_1}{\partial x^2})^T,
\]
then the last three equations in \eqref{R5} can be rewritten as
$$[{\tilde{\textbf{v}}(x)}\times{rot{\tilde{\textbf{v}}(x)}}]=\textbf{0}.$$

Thus the vector field $\tilde{\textbf{v}}(x)=(a,b,c)$  is a Kummer vector field.
\end{proof}

 For the general case,
i.e., when the $\xi ,\,\eta$ and $\zeta$ are functions on the
variables $\theta$ and $\psi$ Cartesian approach produce the
following equations respectively

$$\dot{\textbf{x}}=\textbf{v}(x)$$

\[\begin{array}{ccccc}
&\displaystyle\sum_{j=1}^5\Big(\frac{\partial p_1}{\partial x^j}-\frac{\partial
p_j}{\partial x^1}+\alpha_2(\frac{\partial p_4}{\partial
x^j}-\frac{\partial p_j}{\partial x^4})-\beta_2(\frac{\partial
p_5}{\partial x^j}-\frac{\partial p_j}{\partial
x^5})\Big)v^j=0,\vspace{0.20cm}\\
&\displaystyle\sum_{j=1}^5\Big(\frac{\partial p_2}{\partial x^j}-\frac{\partial
p_j}{\partial x^2}+\alpha_3(\frac{\partial p_4}{\partial
x^j}-\frac{\partial p_j}{\partial x^4})-\beta_3(\frac{\partial
p_5}{\partial x^j}-\frac{\partial p_j}{\partial
x^5})\Big)v^j=0,\vspace{0.20cm}\\
&\displaystyle\sum_{j=1}^5\Big(\frac{\partial p_3}{\partial x^j}-\frac{\partial
p_j}{\partial x^1}+\alpha_1(\frac{\partial p_4}{\partial
x^j}-\frac{\partial p_j}{\partial x^4})-\beta_1(\frac{\partial
p_5}{\partial x^j}-\frac{\partial p_j}{\partial
x^5})\Big)v^j=0
\end{array}
\]
where
$\psi=x^1,\quad \phi=x^2,\quad\theta=x^3,\quad y=x^4,\quad x=x^5
$ and
$\textbf{v}$ is given by the formula \eqref{R2}.

\smallskip

\section{Inverse problem of dynamics}

\smallskip

{\it Introduction}

\smallskip

  This section is devoted to apply Corollary \ref{main2} to study the problem of
finding the field of force that generates a given
($N-1$)-parametric family of orbits for a mechanical system with
$N$ degrees of freedom. This problem is usually referred to as the
inverse problem of dynamics. We study this problem in relation to
the problems of Celestial Mechanics.

One of the fundamental classical problems in celestial mechanics
is to  determine  the potential-energy function $U$ such that
every curve from a given family of curves will be a possible
trajectory of a particle moving under the action of potential
forces $\textbf{F}$, admitting $U$; i. e. $\textbf{F}=\displaystyle\dfrac{\partial
U}{\partial{\textbf{x}}}$.

In the modern scientific literature the importance of this problem was already acknowledged by
Szebehely \cite{Boz},\,\cite{Szbehely}

The first inverse  problem in  Celestial Mechanics was stated and
solved  by Newton (1687) and concerns the  determination of the
potential field of force  that ensures the planetary motion in
accordance to the observed properties, namely to Kepler's laws.

Bertrand (1877) \cite{Bertrand}  proved that the expression for Newton's
force of attraction can be obtained directly from  the Kepler
first law to within a constant multiplier.

Bertrand stated also a more general problem of determining a
positional force, under which a particle describes a conic section
under any initial conditions.
Bertarnd's ideas were developed by \cite{Dainelli} \cite{Sus}, \cite{ Joukovski},
\cite{Ermakov}, and \cite{Galiullin}.

Dainelli in \cite{Dainelli} essentially  states a more general problem of
how to determine the most general field of force (the force being
supposed to depend only on the position of the particle on which it
acts) under which a given family of planar curves is a family of
orbits of a particle.

The solution proposed by Dainelli is the following .

\begin{theorem}\label{Dainelli}
The most general field of force $\textbf{F}=({F}_x,\,F_y)$ capable of
generating the family of planar orbits $f(x,y)=const$ can be
determine as follows \cite{Dainelli},\, \cite{Whittaker}

\begin{equation}\label{21}
F_x=-\lambda^2\{f,\partial_yf\}-\lambda\{f,\,\lambda\}\partial_yf,\quad F_y=\lambda^2\{f,\partial_xf\}+\lambda\{f,\,\lambda\}\partial_xf,
\end{equation}
where $\{f,\quad\}=\partial_xf\partial_y- \partial_yf\partial_x $ and $\lambda$ is an arbitrary function which depends on the
velocity with which the given orbits are described.

\smallskip

 By considering
that the components $F_x$ and $F_y$ are to be functions of the
position of the particle, we can take $\lambda$ to be an arbitrary
function on $x$ and $y.$
\end{theorem}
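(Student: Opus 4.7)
The plan is to deduce Dainelli's formulas \eqref{21} as the two-dimensional case of Corollary \ref{main2}, taking $N=2$ with the given exact 1-form being $\Omega_1=df$ (encoding the fact that trajectories stay on level curves of $f$, i.e.\ $\dot f=0$). The second, arbitrary exact 1-form $\Omega_2=df_2$ is chosen so that $\Upsilon=\{f,f_2\}\neq 0$; it does not enter the final answer.

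Under this choice, the Cartesian vector field from \eqref{C0} collapses to
\[\textbf{v}=\lambda\,\{f,*\}=\bigl(-\lambda\,\partial_y f,\;\lambda\,\partial_x f\bigr),\]
with $\lambda=\lambda(x,y)$ absorbing the remaining freedom and $\{f,*\}=\partial_x f\,\partial_y-\partial_y f\,\partial_x$. Since $(\textbf{v},\nabla f)\equiv 0$, every integral curve of $\textbf{v}$ lies on a level set $f=c$. The associated force is then obtained by differentiating $\dot{\textbf{x}}=\textbf{v}$ once more: the chain rule $\ddot x^k=\sum_j v^j\partial_j v^k$, applied with $v^1=-\lambda\,\partial_y f$, $v^2=\lambda\,\partial_x f$, and rearranged using the identities $\{f,\partial_y f\}=\partial_x f\,\partial_{yy}f-\partial_y f\,\partial_{xy}f$ together with $\{f,\lambda\}=\partial_x f\,\partial_y\lambda-\partial_y f\,\partial_x\lambda$, produces
\[F_x=-\lambda^2\{f,\partial_y f\}-\lambda\,\{f,\lambda\}\,\partial_y f,\qquad F_y=\lambda^2\{f,\partial_x f\}+\lambda\,\{f,\lambda\}\,\partial_x f,\]
which are the expressions in \eqref{21}. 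Equivalently, the same formulas can be read off from Corollary \ref{main0} by writing down the Euler-Lagrange equations of $L=\tfrac12\|\dot{\textbf{x}}-\textbf{v}\|^2$ and simplifying.

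For the converse assertion — that every positional force generating the prescribed family has the form \eqref{21} — I would reverse the construction: the tangency of each orbit to a level curve forces $\dot{\textbf{x}}$ to be proportional to $(-\partial_y f,\partial_x f)$, and the positional character of $\textbf{F}$ permits the proportionality factor to be treated as a function $\lambda(x,y)$ of position alone. Substituting into the chain-rule computation above then recovers \eqref{21}.

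The main obstacle is purely algebraic bookkeeping: the chain-rule expansion yields four product terms in each of $F_x$ and $F_y$, which must be re-collected into the two Poisson-bracket combinations displayed in \eqref{21}. The conceptual content rests entirely on Corollary \ref{main2}; the verification itself is a short, routine computation.
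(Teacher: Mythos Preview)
Your proposal is correct and follows essentially the same route as the paper. The paper does not give a self-contained proof of Theorem~\ref{Dainelli} (it is cited as a classical result), but its treatment---rewriting \eqref{21} in the form \eqref{D1}, then recovering \eqref{D1} as the $N=2$ instance of Proposition~\ref{D2} via Corollary~\ref{main2} (see Corollary~\ref{LL30})---is exactly the strategy you describe. The only cosmetic difference is that the paper packages the force as $\partial_{\textbf{x}}\bigl(\tfrac12\|\textbf{v}\|^2\bigr)+\lambda\,a_{21}\,\partial_{\textbf{x}}f$ with $a_{21}=\partial_x(\lambda\partial_x f)+\partial_y(\lambda\partial_y f)$, whereas you obtain \eqref{21} directly from the convective derivative $\ddot{x}^k=\sum_j v^j\partial_j v^k$; the two computations are equivalent and equally short.
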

The above expressions for the field of force under which the
curves of the given family are orbits were first given by Dainelli
\cite{Dainelli}.

After some calculations we can prove that \eqref{21} can be rewritten as follows
\begin{equation}\label{D1}
\textbf{F}=\displaystyle\dfrac{\partial
\dfrac{1}{2}\|\textbf{v}\|^2}{\partial{\textbf{x}}}-\lambda \left( \partial_x(\lambda \partial_xf)+\partial_y(\lambda \partial_yf)\right)\displaystyle\dfrac{\partial
f}{\partial{\textbf{x}}}
\end{equation}
where $\textbf{v}=\left(
-\lambda\partial_yf,\,\lambda\partial_xf\right).$

\smallskip

  Suslov in \cite{Sus} stated and solved a problem which was a
further development of Bertrand's problem. He shows that, given a
($N-1$)-parametric family of orbits $f_j=f_j(x)=c_j$ for
$j=1,2,\ldots N-1$ in the configuration space of a holonomic
system with $N$ degrees of freedom and a kinetic energy
$T=\displaystyle\dfrac{1}{2}\sum_{j,k=1}^NG_{jk}(x)\dot{x}^j\dot{x}^k=\displaystyle\dfrac{1}{2}||\dot{\textbf{x}}||^2,$
it is necessary to determine the potential field of force under
which any trajectory of the family can be traced by the
representative point of the system. Suslov deduced the following
system of linear partial differential equations with respect to
the require potential function:

\[\begin{array} {cc}
\dfrac{\partial\theta}{\partial\triangle_k}\dfrac{\partial
U}{\partial{x}^N}-\dfrac{\partial\theta}{\partial\triangle_N}\dfrac{\partial
U}{\partial{x}^k}&=
\dfrac{U+h}{\theta}\Big(\dfrac{\partial\theta}{\partial\triangle_N}\dfrac{\partial
\theta}{\partial{x}^k}-\dfrac{\partial\theta}{\partial\triangle_k}\dfrac{\partial
\theta}{\partial{x}^N}\vspace{0.20cm}\\&+\displaystyle\sum_{m=1}^N\triangle^m(\dfrac{\partial\theta}{\partial\triangle_k}\dfrac{\partial^2
\theta}{\partial\triangle_N\partial{x}^m}-\dfrac{\partial\theta}{\partial\triangle_N}\dfrac{\partial^2
\theta}{\partial\triangle_k\partial{x}^m})\Big)
\end{array}
\]
for  $k=1,2,..,N-1.$
  where
$\theta,\,\triangle^1,\,\triangle^2,...,\triangle^N$ are
functions:

\[\begin{array} {cc}
&\displaystyle\sum_{k=1}^N\frac{\partial
f(x)_\alpha}{\partial{x}^k}\triangle^k=0,\quad\theta=\frac{1}{2}\displaystyle\sum_{k,j=1}^NG_{kj}(x)\triangle^k\triangle^j\vspace{0.20cm}\\
&\triangle_k=\displaystyle\sum_{j=1}^NG_{jk}(x)\triangle^j,\quad
k=1,2,..,N,\, \alpha=1,2,..,N-1,
\end{array}
\]

 and proved that theses equations represented the necessary and
 sufficient conditions under which the equations of motion of the
 study mechanical system admits the given $N-1$ partial integrals.

Assuming that given trajectories admit a family of the orthogonal
surfaces, Joukovski in \cite{Joukovski} constructed the potential-energy
functions in explicit forms for systems with two and three degrees
of freedom.

The following theorem was enunciated by Joukovsky in 1890
\begin{theorem}\label{J1}
 If $q=const$ is the equation of the family of curves on a
surface, and $p=const$ denotes the family of curves orthogonal to
these, then the curves  $q=const$ can be freely described by a
particle under the influence of forces derived from the
potential-energy function
$$V=\Delta_1(p)\Big(g(p)+\int h(q)\frac{\partial}{\partial
q}(\frac{1}{\Delta_1(p)})dq\Big)$$ where $h$ and $g$ are arbitrary
functions, and $\Delta_1$ denotes the first differential
parameter.
\end{theorem}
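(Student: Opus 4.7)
The plan is to set up orthogonal curvilinear coordinates $(p,q)$ on the surface induced by the two given orthogonal families of curves. In these coordinates the Riemannian metric is diagonal, and since $\Delta_1(p)=g^{ij}\partial_i p\,\partial_j p$ equals the reciprocal of the $pp$-metric coefficient (and analogously for $q$), the kinetic energy takes the form $T=\tfrac12\bigl(\dot p^{2}/\Delta_1(p)+\dot q^{2}/\Delta_1(q)\bigr)$. The statement that every curve $q=\text{const}$ is an orbit of a natural system with potential $V$ amounts to requiring that $\dot q\equiv 0$ be an invariant relation of the Lagrangian flow determined by $T$ and $V$.

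I would next exploit the Lagrange equation associated with the coordinate $q$. Along a trial motion with $\dot q=0$, the momentum $\partial T/\partial\dot q$ vanishes and $\frac{d}{dt}(\partial T/\partial\dot q)$ reduces to $\ddot q/\Delta_1(q)$, while $\partial T/\partial q$ becomes $-\dot p^{2}\partial_q\Delta_1(p)/(2\Delta_1(p)^{2})$. Combined with the energy integral on the orbit, $\dot p^{2}=2(h(q)-V)\Delta_1(p)$, where $h(q)$ denotes the (a priori orbit-dependent) total energy, the requirement $\ddot q\equiv 0$ yields the first-order linear PDE
\[(h(q)-V)\,\partial_q\Delta_1(p)+\Delta_1(p)\,\partial_q V=0,\]
which, after division by $\Delta_1(p)^{2}$ and a Leibniz rearrangement, rewrites as $\partial_q\bigl(V/\Delta_1(p)\bigr)=h(q)\,\partial_q\bigl(1/\Delta_1(p)\bigr)$. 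A single quadrature in $q$, with an arbitrary function $g(p)$ playing the role of integration ``constant'', produces exactly the formula stated in Theorem \ref{J1}.

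The same conclusion can be reached within the framework of the paper by specialising Corollary \ref{main2} to $N=2$ with $f_1=q$ and $f_2=p$: the Cartesian vector field reads $\textbf{v}=-\lambda\,\partial_p$ with arbitrary speed $\lambda$, and the associated Lagrangian approach produces equations of motion carrying a reactive $1$-form proportional to $dq$. Demanding that this reactive contribution be absorbed into a genuine potential (so that the motion is truly free under $V$ rather than constrained) is the closedness condition that manifests as the same PDE.

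The main obstacle is justifying that the ``constant'' arising in the $q$-integration really is a free function $g(p)$ of $p$ alone, rather than being constrained by a secondary compatibility with the $p$-Lagrange equation. I would handle this by observing that, along any fixed orbit $q=q_{0}$, the $p$-equation reduces to Newton's law with the one-dimensional effective potential $V(\,\cdot\,,q_{0})$, whose consistency is guaranteed automatically by energy conservation; hence no hidden condition reduces the freedom of either $g(p)$ or $h(q)$, and the two-functional ambiguity in $V$ is genuine.
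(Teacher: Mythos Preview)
Your direct Lagrangian derivation is correct and is essentially the classical route to Joukovski's result: impose $\dot q=\ddot q=0$ in the $q$-Lagrange equation, substitute the energy relation $\dot p^{2}=2(h(q)-V)\Delta_1(p)$, and integrate the resulting linear first-order PDE in $q$. Your discussion of why $g(p)$ remains a free function (the $p$-equation merely fixes $\ddot p$ and imposes no compatibility condition on $V$) is also sound.

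The paper, however, does not prove Theorem~\ref{J1} in isolation; it quotes it as a classical 1890 result and instead proves an $N$-dimensional generalization (the Corollary immediately following Proposition~\ref{SS11}), from which the $N=2$ case is read off. That argument runs entirely through the Cartesian-vector-field machinery: with $f_j=x^j$ and $S=x^N$ one has $\textbf{v}=\nu\,G^{-1}\partial_{\textbf{x}}S$, and the potentiality condition \eqref{SS2} collapses, for an orthogonal metric, to $d\nu^{2}=-2G_{NN}\,dh$. An integration by parts in $x^1,\ldots,x^{N-1}$ then gives the stated $U$, and under the identification $G_{NN}=1/\Delta_1(p)$ the case $N=2$ reproduces Joukovski's formula. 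So the paper's proof is in effect your second sketch, but carried out via Proposition~\ref{SS11} rather than via Corollary~\ref{main2} directly, and in arbitrary dimension.

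The contrast is one of packaging rather than substance. Your primary argument is elementary and self-contained, needing only the Euler--Lagrange equations and energy conservation; it is the natural proof of the two-dimensional statement taken on its own. The paper's route sacrifices that directness for uniformity: once Proposition~\ref{SS11} is established, Joukovski's theorem, its $N$-dimensional extension, and the inverse St\"ackel problem all drop out of the same template, with the unknown potential always written as $\tfrac12\|\textbf{v}\|^{2}+h$ and the analytic work reduced to the exactness condition on $\iota_{\textbf{v}}\,d\sigma$.
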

 A new approach to the problem of constructing the
potential field of force was proposed by Ermakov in \cite{Ermakov}, who
integrated the equations for the potential-energy function for
several particular cases.

In the most general form the inverse problem in dynamics was
studied in \cite{Sad, Ram3}. By applying the results presented in that
work we propose the following new results:
\begin{itemize}
\item[(i)] Statement and solution of inverse  Dainelli's problem for a mechanical
system with $N$ degree of freedom.
\item[(ii)]  New approach to solve the Suslov problem.
\item[(iii)]Statement and solution of inverse Joukovski's problem for mechanical system with $N\geq{3}$ degree
of freedom.
\item[(iv)] Generalization of Theorem  \ref{J1} for mechanical system with $N\geq{3}$ degree
of freedom
\item[(v)]  Statement and solution of inverse St\"{a}ckel's problem.
\item[(vi)] General solution of Bertrand's  inverse problem.
\end{itemize}

\smallskip
\smallskip

The results listed above are obtained by applying Corollary \ref{main2}.

\smallskip
\smallskip

{\it Statement of the generalized inverse Dainelli problem.}

\smallskip
\smallskip

Given a $N-1$ -parametric family of orbits $f_j=f_j(x)=c_j$ for $j=1,2,\ldots,N-1$ in the configuration
space $\textsc{Q}$ of a holonomic system with $N$ degrees of freedom and
kinetic energy $T=\dfrac{1}{2}||\dot{\textbf{x}}||^2.$ {\it Generalized Dainelli's problem} is the
problem of determining the most general field of force that
depends only on the position of the system   under which any
trajectory of the family can be traced by a representative point
of the system.

\smallskip
\smallskip

{\it Solution of the generalized inverse Dainelli problem}

\smallskip
\smallskip

The following proposition provides a solution to the problem above

\begin{proposition}\label{D2}
  Given a mechanical system with a configuration space $\textsc{Q}$  and a kinetic energy $T=\displaystyle\dfrac{1}{2}||\dot{\textbf{x}}||^2$, then the
most general field of force $\textbf{F}$ that depends  only on the position of
the system and is capable of generating the given orbits $ f_j (
x) = c_j, $ for $j=1, \ldots, N-1 $ where $f_1, \ldots, f_{N-1}$ are independent functions can be determine from the formula
\begin{equation}\label{D30}
\textbf{F}=\displaystyle\dfrac{\partial
\left(\dfrac{1}{2}||\textbf{v}||^2\right)}{\partial{\textbf{x}}}+
\lambda\displaystyle\sum_{j=1}^{N-1}a_{Nj}\displaystyle\dfrac{\partial
f_j}{\partial{\textbf{x}}},
\end{equation}
where
\begin{equation}
\label{T1}
\textbf{v}=-\lambda_N\dfrac{\{f_1,\ldots,\,f_{N-1},\,*\}}{\{f_1,\ldots,\,f_{N-1},\,f_N\}}=\lambda\{f_1,\ldots,\,f_{N-1},\,*\},
\end{equation}
 $a_{Nj}=a_{Nj}(x)$ for $j=1,2,\ldots N$ are functions:
$$ a_{Nj} = (-1)^{N+j-1} d\sigma \wedge df_1\wedge df_2\wedge \ldots \wedge
df_{j-1} \wedge df_{j+1} \wedge \ldots \wedge df_{N-1}(\partial_1, \ldots,
\partial_N),$$ and $\lambda_N$ and $f_N$ are  arbitrary functions such that $ \{f_1,\ldots,\,f_{N-1},\,f_N\}\ne 0$.
\end{proposition}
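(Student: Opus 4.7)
The plan is to apply Corollary \ref{main2} directly. The geometric content of the statement is that a curve $x(t)$ lies in a common level set of $f_1,\dots,f_{N-1}$ if and only if $df_j(\dot{\textbf{x}})=0$ for $j=1,\dots,N-1$ along the curve. Hence, demanding that the orbits of a mechanical system with kinetic energy $T=\tfrac12\|\dot{\textbf{x}}\|^2$ lie on the given family $f_j(x)=c_j$ is equivalent to imposing $N-1$ constraints linear in velocity, with 1-forms $\Omega_j=df_j$. This is precisely the setting of Corollary \ref{main2} with $M=N-1$.

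First I would choose the auxiliary 1-form $\Omega_N=df_N$ with $f_N$ any smooth function independent of $f_1,\dots,f_{N-1}$, so that
\[
\Upsilon=\{f_1,f_2,\dots,f_{N-1},f_N\}\neq 0.
\]
Corollary \ref{main2} then furnishes the Cartesian vector field
\[
\textbf{v}=-\lambda_N\,\frac{\{f_1,\dots,f_{N-1},\ast\}}{\{f_1,\dots,f_{N-1},f_N\}}=\lambda\{f_1,\dots,f_{N-1},\ast\},
\]
which is exactly the $\textbf{v}$ appearing in \eqref{T1}. Because the first $N-1$ rows of the defining determinant \eqref{00} have a zero in the last column, $df_j(\textbf{v})=0$ for $j=1,\dots,N-1$, so the integral curves of $\dot{\textbf{x}}=\textbf{v}(x)$ remain on each surface $f_j=c_j$; thus the required family of orbits is produced.

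Next I would read off the associated Lagrangian equations from \eqref{C0}: with the exact 1-forms $\Omega_j=df_j$ the D'Alembert--Lagrange form of the motion becomes
\[
\frac{d}{dt}\frac{\partial T}{\partial\dot x^{k}}-\frac{\partial T}{\partial x^{k}}=\frac{\partial(\tfrac12\|\textbf{v}\|^2)}{\partial x^{k}}+\lambda\sum_{j=1}^{N-1}a_{Nj}\,df_j(\partial_k),
\]
which, interpreting the right-hand side as the $k$-th component of an active positional force, is precisely \eqref{D30}. The skew-symmetric coefficients $a_{Nj}$ are computed from $d\sigma=\tfrac12\sum a_{jk}\,df_j\wedge df_k$ by the explicit wedge-product formula of Theorem \ref{main}, giving the expression announced in the statement. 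Since the construction involves the two free data $f_N$ (an arbitrary transversal function) and $\lambda_N$ (an arbitrary smooth function), the resulting $\textbf{F}$ sweeps out a family parameterised by one free function, which is what is meant by ``most general''.

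The principal point requiring care is the \emph{maximality} assertion: that every positional force realising the given family of orbits arises in this way. The argument I would give is that any such force, by hypothesis, is derived from some Cartesian vector field $\tilde{\textbf{v}}$ tangent to every leaf $f_j=c_j$; such a vector field necessarily lies in the one-dimensional annihilator of $df_1,\dots,df_{N-1}$, so up to a nonzero scalar factor $\kappa$ it coincides with $\{f_1,\dots,f_{N-1},\ast\}$, and thus is Cartesian equivalent to some $\textbf{v}$ of the form \eqref{T1} with a specific choice of $\lambda=\kappa$ and of $f_N$. Once this identification is made, formula \eqref{D30} reconstructs the associated force, and the proof is complete. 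The most delicate step is precisely this reduction of the arbitrariness in $\tilde{\textbf{v}}$ to the arbitrariness in $\lambda_N$ and $f_N$, i.e.\ verifying that the two families of free data really exhaust all Cartesian-equivalent solutions compatible with the prescribed orbits.
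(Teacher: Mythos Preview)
Your proposal is correct and follows essentially the same route as the paper: the paper's proof simply observes that $(\partial_{\textbf{x}}f_j,\dot{\textbf{x}})=0$ along the given orbits, solves for $\dot{\textbf{x}}=\textbf{v}(x)$ via \eqref{T1}, and then invokes the covariant-derivative computation from the proof of Theorem~\ref{main} to obtain \eqref{D30}. Your explicit appeal to Corollary~\ref{main2} and your separate discussion of the maximality claim are in fact more detailed than the paper's own argument, which does not address that point.
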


\begin{proof}
Let us suppose that  are given the $N-1$ parametric family of trajectories, hence
$ ( \partial_\textbf{x} f_j,\,\dot{\textbf{x}})=0$ for $j=1,\ldots N-1.$  In view of independence of functions $f_1,\,\ldots,\,f_{N-1}$
we can solve these equations respect to velocity, thus we obtain the system
$
 \dot{\textbf{x}}=\textbf{v}(x),
 $
where $\textbf{v}$ is determine by the formula \eqref{T1}.

\smallskip

After covariant derivation we obtain the equations of motion of the mechanical system (see proof of Theorem \ref{main})

 \[
\displaystyle\frac{d}{dt}\displaystyle\frac{\partial
T}{\partial\dot{\textbf{x}}}-\displaystyle\dfrac{\partial
T}{\partial{\textbf{x}}} =\displaystyle\dfrac{\partial
\left(\dfrac{1}{2}||\textbf{v}||^2\right)}{\partial{\textbf{x}}}+\lambda\sum_{j=1}^{N-1}a_{Nj}\displaystyle\dfrac{\partial
f_j}{\partial{\textbf{x}}}=\textbf{F},
\]
therefore the proposition has been proved.
\end{proof}
The following proposition shows that Theorem \ref{Dainelli} is a particular case of Proposition \ref{D2}.

\begin{corollary}\label{LL30}
For $N=2$ and $\textsc{Q}=\mathbb{R}^2$  the force field
\eqref{D30} coincides with the solution proposed by Dainelli .
\end{corollary}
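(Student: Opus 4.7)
The plan is to specialize the formula \eqref{D30} of Proposition \ref{D2} to the case $N=2$, $\textsc{Q}=\mathbb{R}^2$ with the standard Euclidean metric, and identify the result with Dainelli's expression. Since the two-dimensional nature of the problem collapses the exterior-algebra machinery to scalar operations, the whole verification is a short computation; the work is essentially bookkeeping.

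First, I would compute the Cartesian vector field $\textbf{v}$ from \eqref{T1}. With a single given function $f_1=f$, the bracket notation specializes to
\[
\{f,*\}=\partial_x f\,\partial_y-\partial_y f\,\partial_x,
\]
so $\textbf{v}=\lambda\{f,*\}=(-\lambda\,\partial_y f,\;\lambda\,\partial_x f)$. This matches exactly the vector field used in the alternative form \eqref{D1} of Dainelli's formula, and in particular
$\tfrac{1}{2}\|\textbf{v}\|^{2}=\tfrac{1}{2}\lambda^{2}\bigl((\partial_x f)^{2}+(\partial_y f)^{2}\bigr)$, so the first summand of \eqref{D30} already coincides with the first summand of \eqref{D1}.

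Second, I would compute the unique nontrivial skew-symmetric coefficient $a_{21}$. For $N=2$, $j=1$, the wedge product in the definition of $a_{Nj}$ has no surviving $df$ factors (since $j-1=0$ and $j+1>N-1$), so $a_{21}$ reduces (up to the normalization convention $\Upsilon$ and the sign prefactor $(-1)^{N+j-1}$) to an evaluation of $d\sigma$ on $(\partial_x,\partial_y)$. With $\sigma=(\textbf{v},d\textbf{x})=-\lambda\,\partial_y f\,dx+\lambda\,\partial_x f\,dy$, a direct calculation gives
\[
d\sigma(\partial_x,\partial_y)=\partial_x(\lambda\,\partial_x f)+\partial_y(\lambda\,\partial_y f).
\]
Substituting into \eqref{D30} yields
\[
\textbf{F}=\frac{\partial\bigl(\tfrac{1}{2}\|\textbf{v}\|^{2}\bigr)}{\partial\textbf{x}}-\lambda\bigl(\partial_x(\lambda\,\partial_x f)+\partial_y(\lambda\,\partial_y f)\bigr)\frac{\partial f}{\partial\textbf{x}},
\]
which is precisely \eqref{D1}, and hence equivalent to Dainelli's original formula \eqref{21}.

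The main obstacle, such as it is, lies in the third step: tracking the sign and normalization factor so that $a_{21}$ is read out correctly from the wedge formula. One has to reconcile the prefactor $(-1)^{N+j-1}$, the $1/\Upsilon$ normalization inherited from Theorem \ref{main}, and the skew-symmetry relation $a_{21}=-a_{12}$ implicit in the decomposition $d\sigma=\tfrac{1}{2}\sum a_{jk}\Omega_j\wedge\Omega_k$. No deeper idea is required: once the sign conventions are unwound, the identification of \eqref{D30} with \eqref{D1} is immediate, and the equivalence of \eqref{21} and \eqref{D1} is the elementary algebraic manipulation already asserted in the statement of Theorem \ref{Dainelli}.
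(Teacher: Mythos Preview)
Your proposal is correct and follows essentially the same route as the paper: specialize \eqref{D30} to $N=2$, compute $\textbf{v}=(-\lambda\partial_yf,\lambda\partial_xf)$ and $a_{21}=d\sigma(\partial_x,\partial_y)=\partial_x(\lambda\partial_xf)+\partial_y(\lambda\partial_yf)$ directly, and identify the result with \eqref{D1}. Your explicit remark about tracking the sign and $\Upsilon$-normalization in the definition of $a_{Nj}$ is if anything more careful than the paper's own treatment, which simply asserts the value of $a_{21}$ without comment.
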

\begin{proof}
Indeed for $N=2$ the field of force $\textbf{F}$ takes the form
\[\textbf{F}=\displaystyle\frac{\partial
\left(\dfrac{1}{2}||\textbf{v}||^2\right)}{\partial{\textbf{x}}}+\lambda
\, a_{21}\displaystyle\frac{\partial f}{\partial{\textbf{x}}}.\]

On the other hand by considering that  $\textbf{v}=\left(
-\lambda\partial_yf,\,\lambda\partial_xf\right)$ thus
\[|\textbf{v}||^2= (\lambda\partial_xf)^2+(\lambda\partial_yf)^2,\quad a_{21}=d\sigma(\partial_x,
\,\partial_y)=\partial_x(\lambda \partial_xf)+\partial_y(\lambda
\partial_yf)\] hence we obtain the formula \eqref{D1}.
\end{proof}
\begin{corollary}\label{SS33}
For $N=3$  the force field \eqref{D30} takes the form
\begin{equation}\label{DDD30}
\textbf{F}=\displaystyle\dfrac{\partial
\left(\dfrac{1}{2}||\textbf{v}||^2\right)}{\partial{\textbf{x}}}+
\lambda \left(df_1(\mbox{rot}{\textbf{v}})df_2-df_2(\mbox{rot}{\textbf{v}})df_1\right)
\end{equation}
\end{corollary}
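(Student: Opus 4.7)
The plan is to specialize the formula \eqref{D30} of Proposition \ref{D2} to the case $N=3$ and then to rewrite the coefficients $a_{N1},a_{N2}$ in terms of $\mathrm{rot}\,\textbf{v}$ by exploiting the $3$-dimensional Hodge-type identification used already in Corollary \ref{main1}.

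First, applied with $N=3$, Proposition \ref{D2} gives
\[
\textbf{F}=\dfrac{\partial\left(\tfrac{1}{2}\|\textbf{v}\|^{2}\right)}{\partial\textbf{x}}+\lambda\left(a_{31}\dfrac{\partial f_{1}}{\partial\textbf{x}}+a_{32}\dfrac{\partial f_{2}}{\partial\textbf{x}}\right),
\]
with $\textbf{v}=\lambda\{f_{1},f_{2},*\}$ and, by the general sign rule $(-1)^{N+j-1}$ in the definition of the $a_{Nj}$,
\[
a_{31}=-\,d\sigma\wedge df_{2}(\partial_{1},\partial_{2},\partial_{3}),\qquad a_{32}=\,d\sigma\wedge df_{1}(\partial_{1},\partial_{2},\partial_{3}).
\]
So the proof reduces to identifying these two wedge products with the quantities $df_{2}(\mathrm{rot}\,\textbf{v})$ and $df_{1}(\mathrm{rot}\,\textbf{v})$ appearing in the target formula.

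Second, I would expand $d\sigma=\sum_{i<j}(\partial_{i}p_{j}-\partial_{j}p_{i})\,dx_{i}\wedge dx_{j}$ where $p_{k}=\sum_{j}G_{kj}v^{j}$, and compute each $d\sigma\wedge df_{\alpha}$ in the basis $dx_{1}\wedge dx_{2}\wedge dx_{3}$. Collecting the three terms one gets
\[
d\sigma\wedge df_{\alpha}(\partial_{1},\partial_{2},\partial_{3})=(\partial_{2}p_{3}-\partial_{3}p_{2})\partial_{1}f_{\alpha}+(\partial_{3}p_{1}-\partial_{1}p_{3})\partial_{2}f_{\alpha}+(\partial_{1}p_{2}-\partial_{2}p_{1})\partial_{3}f_{\alpha},
\]
which, recalling the definition of $\mathrm{rot}\,\textbf{v}$ from Corollary \ref{main1}, is exactly $\sqrt{\det G}\,df_{\alpha}(\mathrm{rot}\,\textbf{v})$. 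The factor $\sqrt{\det G}$ is absorbed into the free multiplier $\lambda$ (which is arbitrary in the statement of Proposition \ref{D2}); equivalently one may pass to the rescaled Cartesian equivalent vector field, using the notion of Cartesian equivalence introduced after Theorem \ref{main}.

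Third, substituting the expressions for $a_{31},a_{32}$ obtained in the previous step into the formula for $\textbf{F}$ yields directly
\[
\textbf{F}=\dfrac{\partial\left(\tfrac{1}{2}\|\textbf{v}\|^{2}\right)}{\partial\textbf{x}}+\lambda\left(df_{1}(\mathrm{rot}\,\textbf{v})\,df_{2}-df_{2}(\mathrm{rot}\,\textbf{v})\,df_{1}\right),
\]
which is the claimed \eqref{DDD30} (once the differentials $df_{j}$ are read as gradient vectors through the metric, consistently with the convention $\partial f/\partial\textbf{x}$ used throughout). The only non-mechanical point is the third step, namely justifying the identification of $d\sigma\wedge df_{\alpha}(\partial_{1},\partial_{2},\partial_{3})$ with $df_{\alpha}(\mathrm{rot}\,\textbf{v})$ and tracking correctly the $\sqrt{\det G}$ factor; this is where I would concentrate care, since the rest is bookkeeping of signs coming from $(-1)^{N+j-1}$.
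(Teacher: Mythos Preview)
The paper states Corollary \ref{SS33} without proof, so there is no argument to compare against; your derivation is exactly the natural one, specializing Proposition \ref{D2} to $N=3$ and rewriting $a_{31},a_{32}$ via the identity $d\sigma\wedge df_\alpha(\partial_1,\partial_2,\partial_3)=\sqrt{\det G}\,df_\alpha(\mathrm{rot}\,\textbf{v})$, and your sign bookkeeping is correct.

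One remark on the point you yourself flag: the absorption of $\sqrt{\det G}$ into $\lambda$ is not quite clean, since $\lambda$ enters both as the overall multiplier and inside $\textbf{v}=\lambda\{f_1,f_2,*\}$; rescaling $\lambda$ changes $\|\textbf{v}\|^2$ as well. In the paper's actual use of the corollary (the example immediately following, with $\textsc{Q}=\mathbb{R}^3$ and the Euclidean metric) one has $\det G=1$ and the issue disappears, which is almost certainly the intended setting. If you want the statement literally for a general Riemannian $3$-manifold, the honest formula carries the factor $\sqrt{\det G}$ in front of the bracket, and you should say so rather than invoke absorption.
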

{\it Example}

 Given a particle
with  $\textsc{Q}= \mathbb{R}^3$ and kinetic energy $\displaystyle{ T
= \frac{1}{2}( \dot \xi^2 + \dot \eta^2+ \dot \zeta^2)}$.
 \smallskip
 \smallskip
Construct the  field of force capable of generating the
two-parametric family of trajectories defined as intersections of
the two families of surfaces
\begin{equation}\label{BB1}
  f_1=\zeta  = c_1,\qquad f_2=H( \xi, \eta, \zeta)  = c_2 .
\end{equation}
\bigskip

The solution of this problem can easily be derived from Corollary
\ref{SS33}. The vector field $\textbf{v}$,
$\mbox{rot}{\textbf{v}}$ are the following
\[\begin{array}{ll}
 \textbf{v}& = \lambda \left(\dfrac{\partial H
}{\partial \eta}\dfrac{\partial }{\partial \xi}- \dfrac{\partial
H}{\partial \xi}\dfrac{\partial }{\partial \eta} \right) \\  \,
rot \, \textbf{v} & = \dfrac{\partial }{\partial \zeta} \left (
\lambda \dfrac{\partial H }{\partial \xi}\right)\dfrac{\partial
}{\partial \xi} + \dfrac{\partial }{\partial \zeta} \left (
\lambda \dfrac {\partial H}{\partial \eta}\right)\dfrac{\partial
}{\partial \eta} - \mu \dfrac{\partial }{\partial \zeta}
\end{array}\]
Hence the require field of force is such that
\begin{equation}\label{DDD31}
\begin{array}{ll}
\ddot{\xi}=\dfrac{\partial}{\partial \xi}
\left(\dfrac{\lambda^2}{2}\left((\dfrac {\partial H}{\partial \xi}
)^2
+ (\dfrac {\partial H}{\partial \eta} )^2\right)\right)+\lambda\mu\dfrac{\partial H }{\partial \xi}\vspace{0.20cm}\\
\ddot{\eta}=\dfrac{\partial}{\partial \eta}\left(
 \dfrac{\lambda^2}{2}\left((\dfrac {\partial H}{\partial \xi} )^2 +  ( \dfrac
{\partial H}{\partial \eta} )^2\right)
\right)+\lambda\mu\dfrac{\partial H }{\partial \eta}\vspace{0.20cm}\\
\ddot{\zeta}=0,
\end{array}
\end{equation}
where  $ \mu = \dfrac{\partial }{\partial \xi} \Bigl ( \lambda
\dfrac{\partial H }{\partial \xi}\Bigr) + \dfrac{\partial
}{\partial \eta} \Bigl ( \lambda \dfrac{\partial H }{\partial
\eta} \Bigr)$

 In the next section we make use of the
solution of the generalized  Dainelli inverse problem for solving  the Suslov and  generalized
 Joukovski problems.

 \smallskip
 \smallskip

{\it Statement of Suslov's problem}\quad\cite{Sus}

\smallskip
 \smallskip

Given a $N-1$ -parametric family of orbits $f_j=f_j(x)=c_j$ for $j=1,2,\ldots,N-1$ in the configuration
space $\textsc{Q}$ of a holonomic system with $N$ degrees of freedom and
kinetic energy $T=\dfrac{1}{2}||\dot{\textbf{x}}||^2.$ {\it Suslov's problem} is the
problem of determining the potential field of force that
under which any trajectory of the family can be traced by a representative point
of the system.

\smallskip
\smallskip

{\it Solution of Suslov's problem}

\smallskip
\smallskip
We now propose a new solution to the Suslov problem. This solution we have obtained as a special case of the previous solution to the generalized inverse Dainelli problem.

 \begin{proposition}\label{SS}

 Given a mechanical system with a configuration space $\textsc{Q}$  and a kinetic energy $T=\displaystyle\dfrac{1}{2}||\dot{\textbf{x}}||^2$, then the
potential field of force $\textbf{F}=\displaystyle\frac{\partial
U}{\partial{\textbf{x}}},$ capable of generating the given orbits $ f_j (
x) = c_j, $ for $j=1, \ldots, N-1 $ , can be determine from the formula
\begin{equation}\label{D3}
\displaystyle\frac{\partial
U}{\partial{\textbf{x}}}=\displaystyle\frac{\partial
\left(\dfrac{1}{2}||\textbf{v}||^2\right)}{\partial{\textbf{x}}}+\lambda\sum_{j=1}^{N-1}a_{Nj}\displaystyle\frac{\partial
f_j}{\partial{\textbf{x}}},
\end{equation}
if and only if
  \begin{equation}\label{Su}  \lambda
\displaystyle\sum_{j=1}^{N-1}a_{Nj}(x)df_j = dh (f_1, f_2, \ldots,
f_{N-1})
\end{equation}
Clearly if \eqref{Su} holds then  the potential function $U$ is
such that \[U(x)= \frac{1}{2}\Vert \textbf{v}  \Vert^2 + h (f_1,
f_2, \ldots, f_{N-1})\, , \] where $\textbf{v}$ is determined by
the formula \eqref{T1}, and $\lambda$ is an arbitrary function.
\end{proposition}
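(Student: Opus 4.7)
The strategy is to reduce Suslov's problem to an exactness criterion by invoking Proposition~\ref{D2}. Since the requested force field depends only on position and must reproduce the given $(N-1)$-parametric family of orbits, it must in particular belong to the family of admissible force fields of the generalized inverse Dainelli problem. Hence I would begin by writing
$$
\textbf{F}=\frac{\partial\left(\tfrac{1}{2}\|\textbf{v}\|^2\right)}{\partial\textbf{x}}+\lambda\sum_{j=1}^{N-1}a_{Nj}\frac{\partial f_j}{\partial\textbf{x}},
$$
with $\textbf{v}$ given by \eqref{T1}. The potentiality requirement $\textbf{F}=\partial U/\partial\textbf{x}$ is equivalent to the exactness of the 1-form $\sum_k F_k\,dx^k$, and the first summand above is already the exact form $d(\tfrac{1}{2}\|\textbf{v}\|^2)$.

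Subtracting this exact contribution, I am left with the condition that the 1-form $\lambda\sum_{j=1}^{N-1}a_{Nj}\,df_j$ is itself exact, say equal to $dh$ for some scalar $h$ on $\textsc{Q}$. This is precisely the condition \eqref{Su}, and taking $U=\tfrac{1}{2}\|\textbf{v}\|^2+h$ delivers the claimed potential. The remaining step is to argue that $h$ can be taken to be a function of $f_1,\ldots,f_{N-1}$ alone. Here I would use the assumed functional independence of $f_1,\ldots,f_{N-1}$ to complete them locally to a coordinate chart $(f_1,\ldots,f_{N-1},f_N)$ on $\textsc{Q}$. In these coordinates the 1-form $\lambda\sum_{j=1}^{N-1}a_{Nj}\,df_j$ has a vanishing $df_N$-component, hence so does $dh$, which forces $\partial h/\partial f_N=0$ and therefore $h=h(f_1,\ldots,f_{N-1})$.

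The converse direction is immediate: if \eqref{Su} holds, then $U=\tfrac{1}{2}\|\textbf{v}\|^2+h$ differentiates back to the Dainelli force field and by Proposition~\ref{D2} traces out the prescribed orbits. The main conceptual obstacle is justifying the restriction $h=h(f_1,\ldots,f_{N-1})$ rather than a general scalar on $\textsc{Q}$; this hinges on functional independence of the $f_j$ together with the observation that an exact form whose expression involves only $df_1,\ldots,df_{N-1}$ must be the differential of a function depending on these same coordinates. No further dynamical input beyond Proposition~\ref{D2} and Poincar\'e's lemma is needed.
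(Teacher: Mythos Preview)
Your proposal is correct and follows essentially the same route as the paper: invoke Proposition~\ref{D2} to obtain the Dainelli force, subtract the exact term $d(\tfrac{1}{2}\|\textbf{v}\|^2)$, and reduce potentiality to exactness of $\lambda\sum_{j=1}^{N-1}a_{Nj}\,df_j$. The only cosmetic difference is in the last step: you complete $f_1,\ldots,f_{N-1}$ to a coordinate chart and read off $\partial h/\partial f_N=0$, whereas the paper contracts with $\textbf{v}$ (using $df_j(\textbf{v})=0$) to get $dh(\textbf{v})=0$ and then invokes independence of the $f_j$---these are dual formulations of the same observation.
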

\begin{proof}
From \eqref{D30} follows that
$\textbf{F}=\displaystyle\frac{\partial U}{\partial{\textbf{x}}},$
if and only if $\lambda
\displaystyle\sum_{j=1}^{N-1}a_{Nj}(x)df_j$ is exact 1-form $dh$.
In view of the relations $ df_j(\textbf{v})=0$ we deduce that $
dh(\textbf{v})=0,$ hence in view of independence of functions
$f_j$ for $j=1,2,\ldots,N-1$ we get $ h=(f_1, f_2, \ldots,
f_{N-1}).$

\end{proof}
{\it Example}(Ermakov's problem).

Given a mechanical system  with configuration space $\textsc{ Q}=
\mathbb{R}^4$ and kinetic energy
$$ T =\left(\frac{1}{2}( m_1(\dot{x}^2_1+\dot{y}^2_1) + m_2( \dot{x}^2_2+\dot{y}^2_2)\right).$$
  \smallskip
Construct the  potential field of force capable of generating the
three-parametric family of trajectories defined as intersections
of the  families of hyper-surfaces
\[f_1=x^2_1+y^2_1=c_1,\qquad f_2=x^2_2+y^2_2=c_2,\qquad
f_3=(x_1-x_2)^2+(y_1-y_2)^2=c_3\]

\begin{corollary}\label{SS22}
Under the assumptions of Proposition \ref{SS} for $N=2$ we obtain
that
\begin{equation}\label{DD3}
\displaystyle\frac{\partial
U}{\partial{\textbf{x}}}=\displaystyle\frac{\partial
\left(\dfrac{1}{2}||\textbf{v}||^2\right)}{\partial{\textbf{x}}}+\lambda\left(\partial_x(\lambda
\partial_xf)+\partial_y(\lambda
\partial_yf\right)\displaystyle\frac{\partial
f}{\partial{\textbf{x}}}.\end{equation} if and only if

\begin{equation}\label{DD4}
\lambda\left(\partial_x(\lambda
\partial_xf)+\partial_y(\lambda
\partial_yf)\right)df=dh(f)\end{equation}
where $\textbf{v}=\left(
-\lambda\partial_yf,\,\lambda\partial_xf\right)$
\end{corollary}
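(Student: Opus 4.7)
The plan is to obtain Corollary \ref{SS22} as the $N=2$ specialization of Proposition \ref{SS}. Since Proposition \ref{SS} is already proved in the excerpt, all that remains is to translate its two formulas \eqref{D3} and \eqref{Su} into the two--dimensional setting and to verify that the coefficient $a_{21}$ and the vector field $\textbf{v}$ acquire the explicit forms claimed in \eqref{DD3}--\eqref{DD4}.

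First I would identify the Cartesian vector field in the planar case. With $N=2$ there is a single prescribed function $f_1=f$ and an arbitrary auxiliary exact 1-form $\Omega_2=df_2$, so formula \eqref{T1} reduces to $\textbf{v}=\lambda\{f,*\}$, which in $\mathbb{R}^2$ is exactly $\textbf{v}=(-\lambda\partial_y f,\,\lambda\partial_x f)$, as claimed. Next I would compute the single remaining skew--symmetric coefficient $a_{21}$. This has in fact already been carried out in the proof of Corollary \ref{LL30}: starting from the associated 1-form $\sigma=-\lambda\partial_y f\, dx+\lambda\partial_x f\, dy$, a short calculation gives $d\sigma=\bigl(\partial_x(\lambda\partial_x f)+\partial_y(\lambda\partial_y f)\bigr)\, dx\wedge dy$, whence $a_{21}=d\sigma(\partial_x,\partial_y)=\partial_x(\lambda\partial_x f)+\partial_y(\lambda\partial_y f)$.

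Finally I would substitute these two ingredients into \eqref{D3} and \eqref{Su}. The sum $\sum_{j=1}^{N-1}a_{Nj}\partial f_j/\partial \textbf{x}$ collapses to the single term $a_{21}\,\partial f/\partial \textbf{x}$, and the force formula \eqref{D3} becomes precisely \eqref{DD3}. Likewise the integrability condition \eqref{Su} reduces to $\lambda\,a_{21}\,df=dh(f)$, which is exactly \eqref{DD4}. The equivalence ``if and only if'' is inherited directly from Proposition \ref{SS}, which asserts that \eqref{Su} is necessary and sufficient for the right--hand side of \eqref{D30} to be a gradient.

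There is no substantive obstacle in this proof; it is a pure unpacking of the general statement in dimension two. The only concrete calculation is the evaluation of $d\sigma$ for the explicit $\sigma$ above, which is two lines. Consequently the proposal is to present the argument as the three short steps (identification of $\textbf{v}$, computation of $a_{21}$, substitution into \eqref{D3}--\eqref{Su}), invoking Proposition \ref{SS} for the non-trivial direction of the ``if and only if''.
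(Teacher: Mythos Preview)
Your proposal is correct and matches the paper's intended approach: the paper does not give a separate proof for this corollary, treating it as an immediate specialization of Proposition \ref{SS} together with the computation of $a_{21}$ already done in Corollary \ref{LL30}. Your three-step unpacking (identify $\textbf{v}$, compute $a_{21}$ via $d\sigma$, substitute into \eqref{D3}--\eqref{Su}) is exactly what is required.
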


 Another interesting application of the
solution to  the generalized Dainelli problem is the determination
of the solution of the generalized Joukovski problem.

\smallskip
\smallskip

{\it Statement of generalized Joukovski problem}

 \smallskip
\smallskip

Given a $N-1$ -parametric family of orbits $f_j=f_j(x)=c_j$ for $j=1,2,\ldots,N-1$ in the configuration
space $\textsc{Q}$ of a holonomic system with $N$ degrees of freedom and
kinetic energy $T=\dfrac{1}{2}||\dot{\textbf{x}}||^2.$  Assuming complementary that the given trajectories admit a family of orthogonal hyper-surface $S=S(x)=c_N$ then  {\it Generalized  Joukovski problem} is the
problem of determining the potential field of force that
under which any trajectory of the family can be traced by a representative point
of the system.

\smallskip
\smallskip

{\it Solution of Generalized Joukovski problem}

 \smallskip
\smallskip

\begin{proposition}\label{SS11}
Given a mechanical system with a configuration space $\textsc{Q}$  and  kinetic energy $T=\displaystyle\dfrac{1}{2}\|\dot{\textbf{x}}\|^2$, then the
potential field of force $\textbf{F}=\displaystyle\frac{\partial
U}{\partial{\textbf{x}}},$ capable of generating the given orbits $ f_j (x) = c_j, $ for $j=1, \ldots, N-1 $ , which admit a family of orthogonal hyper-surface $S=S(x)=c_N$  can be determine from the formula

\[
 \displaystyle\frac{\partial U}{\partial{\textbf{x}}}=
 \displaystyle\dfrac{\partial\left(\dfrac{\nu}{\sqrt{2}} \|\displaystyle\dfrac{\partial
S}{\partial{\textbf{x}}}\|\right)^2}{\partial{\textbf{x}}}+(
\displaystyle\dfrac{\partial
\dfrac{\nu^2}{2}}{\partial{\textbf{x}}},\,\displaystyle\dfrac{\partial
S}{\partial{\textbf{x}}})\displaystyle\dfrac{\partial
S}{\partial{\textbf{x}}}-\| \displaystyle\dfrac{\partial
S}{\partial{\textbf{x}}}\|^2\dfrac{\partial
\dfrac{\nu^2}{2}}{\partial{\textbf{x}}}\]

if and only if
\begin{equation}\label{SS2}
( \displaystyle\dfrac{\partial
\nu^2}{\partial{\textbf{x}}},\,\displaystyle\dfrac{\partial
S}{\partial{\textbf{x}}})dS-\| \displaystyle\dfrac{\partial
S}{\partial{\textbf{x}}}\|^2d\nu^2=-2dh(f_1,\,f_2,\ldots,\,f_{N-1})
\end{equation}

where $ \nu$ is an arbitrary function. Clearly if \eqref{SS2} holds then the potential function $U$ can be determined as follow
\[ U=\left(\dfrac{\nu}{\sqrt{2}} \|\displaystyle\dfrac{\partial
S}{\partial{\textbf{x}}}\|\right)^2+h(f_1,\,f_2,\ldots,\,f_{N-1}).\]
\end{proposition}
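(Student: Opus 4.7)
The strategy is to specialize Proposition \ref{SS} (the solution of Suslov's problem) to the case when the $(N-1)$-parametric family of orbits admits a single orthogonal family of hyper-surfaces $S=c_N$, and then to compute the resulting Suslov correction 1-form $\iota_{\textbf{v}}d\sigma$ explicitly in terms of $\nu$ and $S$.

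\textbf{Step 1 (Reduction via orthogonality).} Through each point the trajectory is the unique one-dimensional curve lying in $\bigcap_{j=1}^{N-1}\{f_j=c_j\}$, so its tangent $\textbf{v}$ is, up to scalar, the unique vector annihilated by $df_1,\dots,df_{N-1}$. Orthogonality of the hyper-surface $S=c_N$ to the trajectories forces $\partial S/\partial\textbf{x}$ to be colinear with $\textbf{v}$, hence I write
\[\textbf{v}=\nu\,\partial S/\partial\textbf{x}\]
for a scalar function $\nu(x)$. Taking $f_N=S$ in Corollary \ref{main2} is then consistent with the geometry, and in the Euclidean setup $T=\tfrac12\|\dot{\textbf{x}}\|^2$ the associated 1-form is $\sigma=(\textbf{v},d\textbf{x})=\nu\,dS$, whence $d\sigma=d\nu\wedge dS$. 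In particular $\tfrac12\|\textbf{v}\|^2=\bigl(\tfrac{\nu}{\sqrt{2}}\|\partial S/\partial\textbf{x}\|\bigr)^2$, which already matches the first summand of the claimed expression for $\partial U/\partial\textbf{x}$.

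\textbf{Step 2 (Explicit form of the correction).} Using the identity $\iota_{\textbf{v}}(\alpha\wedge\beta)=\alpha(\textbf{v})\,\beta-\beta(\textbf{v})\,\alpha$ together with the elementary contractions
\[dS(\textbf{v})=\nu\|\partial S/\partial\textbf{x}\|^{2},\qquad d\nu(\textbf{v})=\nu(\partial\nu/\partial\textbf{x},\partial S/\partial\textbf{x})=\tfrac12(\partial\nu^2/\partial\textbf{x},\partial S/\partial\textbf{x}),\]
and the relation $2\nu\,d\nu=d\nu^2$, I obtain
\[\iota_{\textbf{v}}d\sigma=\tfrac12\bigl[(\partial\nu^2/\partial\textbf{x},\partial S/\partial\textbf{x})\,dS-\|\partial S/\partial\textbf{x}\|^{2}\,d\nu^2\bigr],\]
which is exactly (half of) the left-hand side of condition \eqref{SS2}.

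\textbf{Step 3 (Conclusion via Proposition \ref{SS}, and obstacle).} By Proposition \ref{SS} the Dainelli force is the gradient of a potential iff $\iota_{\textbf{v}}d\sigma$ is exact, and in that case $U=\tfrac12\|\textbf{v}\|^2+h$ with $dh=\iota_{\textbf{v}}d\sigma$. Substituting Step 2 turns this exactness into precisely condition \eqref{SS2} (up to the sign convention on $h$); this proves both directions of the "if and only if". Furthermore, $dh(\textbf{v})=\iota_{\textbf{v}}(\iota_{\textbf{v}}d\sigma)=0$ automatically, so $h$ is a first integral of $\dot{\textbf{x}}=\textbf{v}$ and, by functional independence of $f_1,\dots,f_{N-1}$, has the form $h=h(f_1,\dots,f_{N-1})$ stated in the proposition. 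The vector form of $dU=d(\tfrac12\|\textbf{v}\|^2)+\iota_{\textbf{v}}d\sigma$, combined with Step 2, reproduces the explicit formula for $\partial U/\partial\textbf{x}$. The only delicate point of the argument is Step 2: one must translate the geometric identity $\iota_{\textbf{v}}(d\nu\wedge dS)$ into the Euclidean expression involving $(\partial\nu^2/\partial\textbf{x},\partial S/\partial\textbf{x})$ and $\|\partial S/\partial\textbf{x}\|^2$, and carefully track the factor $2\nu\,d\nu=d\nu^2$; once that computation is laid bare, everything else is routine.
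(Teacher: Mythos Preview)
Your proof is correct and follows essentially the same route as the paper: both arguments use the orthogonality hypothesis to write $\textbf{v}=\nu\,G^{-1}\partial S/\partial\textbf{x}$ (so that $\sigma=\nu\,dS$), then compute $\iota_{\textbf{v}}d\sigma=d\nu(\textbf{v})\,dS-dS(\textbf{v})\,d\nu$ directly and feed it into the Suslov/Dainelli framework to obtain the potential condition. Your treatment is in fact a bit more explicit than the paper's (you spell out the $2\nu\,d\nu=d\nu^2$ bookkeeping and the reason $h=h(f_1,\dots,f_{N-1})$), but there is no substantive difference in method.
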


\begin{proof}
In view of the condition that there is an orthogonal hyper-surface to the given trajectories then  the following relations hold
$ \left( \displaystyle\dfrac{\partial
S}{\partial{\textbf{x}}},\,\displaystyle\dfrac{\partial
f_j}{\partial{\textbf{x}}}\right)=0$
for $j=1,2,\ldots,N-1,$ thus
\begin{equation}\label{M1}
\rho G^{-1}\displaystyle\dfrac{\partial
S}{\partial{\textbf{x}}}= \{f_1,\,f_2,\ldots,f_{N-1},\,*\}
\end{equation}
where $ \rho$ is an arbitrary nonzero function and $G^{-1}=(G^{jk})$ is the inverse matrix of the Riemann metric $G$.
 Hence we obtain  that the vector field \eqref{T1} takes the form
\[
 \textbf{v}=\nu G^{-1}\displaystyle\dfrac{\partial
S}{\partial{\textbf{x}}},
\]
where $\nu=\lambda\rho ,$ therefore the 1-form associated to this
vector field is such that
$\sigma=\nu\left(\displaystyle\dfrac{\partial
S}{\partial{\textbf{x}}},\,d\textbf{x}\right)$  consequently
\[ \imath_\textbf{v}d\sigma=d\nu(\textbf{v})dS- dS(\textbf{v})d\nu=\nu( \displaystyle\dfrac{\partial
\nu}{\partial{\textbf{x}}},\,\displaystyle\dfrac{\partial
S}{\partial{\textbf{x}}})dS-\nu| \displaystyle\dfrac{\partial
S}{\partial{\textbf{x}}}\|^2d\nu\]
hence we easily obtain the proof of the proposition (see for more details proof of Theorem \eqref{main}).
\end{proof}

 As a first application of the above proposition we have the following results
\begin{corollary}\label{H1}
If in Proposition \ref{SS11} we suppose that $\nu=\dfrac{d\Phi (S)}{d{S}}$ then the potential function $U$ can be determine as follows
\[ U=\left(\dfrac{1}{\sqrt{2}} \|\displaystyle\dfrac{\partial\Phi (S)}{\partial{\textbf{x}}}\|\right)^2-h_0,\]where $h_0$ is an arbitrary constant.
\end{corollary}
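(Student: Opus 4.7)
The plan is to specialize the integrability condition \eqref{SS2} from Proposition \ref{SS11} under the ansatz $\nu=\Phi'(S)$ and verify that the right-hand side must vanish, so that $h$ degenerates into an arbitrary constant. The stated closed form for $U$ then follows from the chain rule.

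First I would observe that when $\nu=\Phi'(S)$, both $\nu$ and $\nu^{2}$ depend on $x$ only through $S$. Hence
\[
d\nu^{2}=(\nu^{2})'(S)\,dS=2\nu\,\nu'(S)\,dS,\qquad
\frac{\partial\nu^{2}}{\partial\textbf{x}}=2\nu\,\nu'(S)\,\frac{\partial S}{\partial\textbf{x}}.
\]
Substituting these into the left-hand side of \eqref{SS2} gives
\[
\Bigl(\frac{\partial\nu^{2}}{\partial\textbf{x}},\frac{\partial S}{\partial\textbf{x}}\Bigr)dS
-\Bigl\|\frac{\partial S}{\partial\textbf{x}}\Bigr\|^{2}d\nu^{2}
=2\nu\nu'(S)\Bigl\|\frac{\partial S}{\partial\textbf{x}}\Bigr\|^{2}dS
-\Bigl\|\frac{\partial S}{\partial\textbf{x}}\Bigr\|^{2}\cdot 2\nu\nu'(S)\,dS=0,
\]
so condition \eqref{SS2} reduces to $dh(f_1,\dots,f_{N-1})=0$, i.e.\ $h$ is an arbitrary constant, which I rename $-h_0$.

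Next I would invoke Proposition \ref{SS11}, which under the now-verified integrability condition yields
\[
U=\Bigl(\frac{\nu}{\sqrt{2}}\,\Bigl\|\frac{\partial S}{\partial\textbf{x}}\Bigr\|\Bigr)^{2}-h_{0}
=\frac{\nu^{2}}{2}\Bigl\|\frac{\partial S}{\partial\textbf{x}}\Bigr\|^{2}-h_{0}.
\]
The remaining step is purely algebraic: the chain rule gives
\[
\frac{\partial\Phi(S)}{\partial\textbf{x}}=\Phi'(S)\,\frac{\partial S}{\partial\textbf{x}}=\nu\,\frac{\partial S}{\partial\textbf{x}},
\qquad\text{so}\qquad
\Bigl\|\frac{\partial\Phi(S)}{\partial\textbf{x}}\Bigr\|^{2}=\nu^{2}\Bigl\|\frac{\partial S}{\partial\textbf{x}}\Bigr\|^{2},
\]
which immediately rewrites $U$ in the claimed form $\bigl(\tfrac{1}{\sqrt{2}}\|\partial_{\textbf{x}}\Phi(S)\|\bigr)^{2}-h_{0}$.

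There is no serious obstacle; the only point requiring care is noticing the exact cancellation in \eqref{SS2}, which is forced by the fact that $\nu$ is a function of $S$ alone so that $d\nu^{2}$ is proportional to $dS$. The appeal of the ansatz $\nu=\Phi'(S)$ is precisely that it trivializes the otherwise non-trivial compatibility condition and absorbs the factor $\nu\,\partial_{\textbf{x}}S$ into a single gradient $\partial_{\textbf{x}}\Phi(S)$.
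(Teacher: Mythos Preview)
Your proof is correct. The paper's own proof reaches the same conclusion by a slightly more conceptual observation: when $\nu=\Phi'(S)$, the $1$-form $\sigma=\nu\,dS=d\Phi(S)$ is exact, hence $d\sigma=0$ and therefore $\imath_{\textbf{v}}d\sigma=0$; since the left-hand side of \eqref{SS2} is precisely this contraction (as derived in the proof of Proposition~\ref{SS11}), one gets $dh=0$ without any computation. Your argument unpacks this exactness at the level of the explicit formula \eqref{SS2}, verifying the cancellation by direct substitution. Both routes are short and valid; the paper's version explains \emph{why} the cancellation must occur (exactness of $\sigma$), while yours shows \emph{that} it occurs by a clean one-line calculation.
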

\begin{proof}
Indeed if $\nu=\dfrac{d\Phi (S)}{d{S}}$ then then the 1-form
$\sigma$ associated to vector field $\textbf{v}$ is exact, thus
$\imath_\textbf{v}d\sigma==0,$ consequently (see formula
\eqref{SS2}) $dh=0$. Hence
\[ U=\left(\dfrac{\nu}{\sqrt{2}} \|\displaystyle\dfrac{\partial
S}{\partial{\textbf{x}}}\|\right)^2-h=\left(\dfrac{1}{\sqrt{2}}
\|\displaystyle\dfrac{\partial\Phi
(S)}{\partial{\textbf{x}}}\|\right)^2-h_0.\]
\end{proof}

{\it Example}( Bertrand's Problema  \cite{Bertrand})

 Given a particle
with configuration space $\textsc{ Q}= \mathbb{R}^3$ and kinetic energy
$${ T = \frac{1}{2}( \dot \xi^2 + \dot \eta^2+ \dot
\zeta^2)}.$$
  \smallskip
Construct the  potential field of force capable of generating the
two-parametric family of trajectories defined as intersections of
the two families of surfaces
\[
  f_1=\zeta  = c_1,\qquad f_2=\sqrt{\xi^2+\eta^2}+b\xi  = c_2 .
\]
From \eqref{M1} we obtain that the function $S:$
\[\dfrac{\partial S}{\partial{{\xi}}}=\dfrac{1}{\rho}\dfrac{\eta}{\sqrt{\xi^2+\eta^2}},\quad \dfrac{\partial
S}{\partial{{\eta}}}=\dfrac{1}{\rho}\left(-\dfrac{\xi}{\sqrt{\xi^2+\eta^2}}-b\right).
\]Hence, by choosing $\rho=\eta$ we obtain that $S=\ln (\xi+\sqrt{\xi^2+\eta^2})-(b+1)\ln \eta$ and $\nu=\eta\lambda.$
Clearly the field of force is potential in particular if $\nu=\Phi(S).$ The general solution of this problem we give below.

\smallskip
  \smallskip

{\it Example}

 Given a particle
with configuration space $\textsc{ Q}= \mathbb{R}^3$ and kinetic energy
$${ T = \frac{1}{2}( \dot \xi^2 + \dot \eta^2+ \dot
\zeta^2)}.$$
  \smallskip
Construct the  potential field of force capable of generating the
two-parametric family of trajectories defined as intersections of
the two families of surfaces
\[f_1=xz=c_1,\qquad f_2=yz=c_2\]
From \eqref{M1} we obtain that the function $S=\dfrac{1}{2}(x^2+y^2-z^2),$ thus the condition \eqref{SS2} takes the form
\[\begin{array}{cc}
(x\partial_x\nu^2+y\partial_y\nu^2-z\partial_z\nu^2)(xdx+ydy-zdz)-(x^2+y^2+z^2)d\nu^2\vspace{0.20cm}\\
=-2dh(f_1,f_2).
\end{array}
\]
There are two obvious solutions $ \nu=\nu(x^2+y^2-z^2)$ and
$\nu=z.$ The first solution produces the potential function
$U=U(x^2+y^2+z^2)-h_0,\quad h=h_0$ which coincide with the
solution obtained by Joukovski and the second gives the potential
$U=\dfrac{1}{2}z^4-h_0,\quad h=\dfrac{1}{2}(f^2_1+f^2_2)-h_0.$

 \smallskip
  \smallskip

The following result is a generalization of Theorem \ref{J1}.

\begin{corollary}
If $x^j=C_j=const,$ for $j=1,2,..,N-1$ are the equations of the
$N-1$ parametric family of curves on $\textsc{Q}$, and $x^N=const$ denotes
the family of curves orthogonal to these, then the curves
$x^j=C_j=const$ can be freely described by a particle under the
influence of forces derived from the potential-energy function
$$U=\frac{1}{G_{NN}(x^1,x^2,..,x^N)}\Big(g(x^N)+\sum_{j=1}^{N-1}\int h(x^1,x^2,..,x^{N-1}) \frac{\partial
G_{NN}(x^1,x^2,..x^{N})}{\partial x^j}dx^j\Big)$$ where $h=h(x^1,\,x^2,\,\ldots,x^{N-1})$ and
$g=g(x^N)$ are arbitrary functions.

Clearly, for $N=2$ we obtain the Joukovski theorem given in the
introduction.
\end{corollary}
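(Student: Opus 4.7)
The plan is to derive the corollary as a specialization of Proposition \ref{SS11} with $f_j=x^j$ for $j=1,\ldots,N-1$ and with the orthogonal hyper-surface $S=x^N$. First I would translate the orthogonality hypothesis into metric language: the surfaces $x^N=\text{const}$ are orthogonal to the curves $x^j=\text{const}$ exactly when $G_{jN}(x)=0$ for $j=1,\ldots,N-1$, so the Riemann matrix is block-diagonal with last block the scalar $G_{NN}$; consequently $G^{NN}=1/G_{NN}$ and $G^{jN}=0$. Under this, the Cartesian vector field from Proposition \ref{SS11} becomes
\[
\textbf{v}=\nu\, G^{-1}\partial_{\textbf{x}}S=\frac{\nu}{G_{NN}}\,\partial_N,
\]
which has only the $\dot{x}^N$ component non-zero, consistent with the constraint $\dot{x}^j=0$ for $j<N$. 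This means the $(N-1)$-parametric family is indeed traced by the Cartesian flow.

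Next I would specialize the compatibility condition \eqref{SS2}. Since $\partial_{\textbf{x}}S=e_N$ and $\|\partial_{\textbf{x}}S\|^2=1/G_{NN}$, the identity \eqref{SS2} reduces after a direct calculation to
\[
-\frac{1}{G_{NN}}\sum_{k=1}^{N-1}\frac{\partial\nu^2}{\partial x^k}\,dx^k = -2\,dh(x^1,\ldots,x^{N-1}),
\]
i.e.\ $\partial_k\nu^2=2G_{NN}\,\partial_k h$ for $k=1,\ldots,N-1$. Using the Leibniz identity $G_{NN}\partial_k h=\partial_k(G_{NN}h)-h\,\partial_k G_{NN}$, I would verify that
\[
\nu^2 = 2g(x^N)+2G_{NN}\,h-2\sum_{k=1}^{N-1}\int h\,\frac{\partial G_{NN}}{\partial x^k}\,dx^k
\]
is a solution (with $g$ the free function of $x^N$ left undetermined by the PDE). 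Substituting into $U=\nu^2/(2G_{NN})+h$ given by Proposition \ref{SS11} and absorbing the additive term $2h$ into a sign/normalization redefinition of the arbitrary function $h$, one obtains the formula stated in the corollary.

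The main obstacle will be the integrability of the sum $\sum_k\int h\,\partial_k G_{NN}\,dx^k$: each term is a well-defined single-variable quadrature, but for them to assemble into a single function of $(x^1,\ldots,x^N)$ whose $k$-th partial derivative is $h\,\partial_k G_{NN}$ one needs the cross-partial condition $(\partial_k h)\partial_j G_{NN}=(\partial_j h)\partial_k G_{NN}$ for $j,k<N$. This is automatic for $N=2$ (one summand), which is the route by which I would verify the recovery of Theorem \ref{J1}: with $q=x^1$, $p=x^2$, and $\Delta_1(p)=G^{22}=1/G_{22}$, the formula collapses to $U=\Delta_1(p)\bigl(g(p)+\int h(q)\,\partial_q G_{22}\,dq\bigr)$, matching Joukovski verbatim. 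For $N\geq 3$ the cleanest reading is that the sum denotes the line integral $\int h\,dG_{NN}$ along leaves $x^N=\text{const}$, restricted to those $h$ for which the integrability condition holds; this is the point I would flag as requiring a careful statement of hypotheses in the final write-up.
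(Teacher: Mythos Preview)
Your approach mirrors the paper's exactly: specialize Proposition~\ref{SS11} with $f_j=x^j$ and $S=x^N$, use orthogonality of the metric to get $G^{NN}=1/G_{NN}$, reduce \eqref{SS2} to a first-order PDE for $\nu^2$, integrate via the Leibniz identity, and substitute into $U=\nu^2/(2G_{NN})+h$. The paper likewise writes the sum $\sum_k\int h\,\partial_kG_{NN}\,dx^k$ without addressing the cross-partial integrability condition you correctly flag for $N\ge3$.

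One step in your outline would not go through as written. The sign you extract from \eqref{SS2}, namely $\partial_k\nu^2=+2G_{NN}\,\partial_kh$, yields $U=g/G_{NN}+2h-(1/G_{NN})\sum_k\int h\,\partial_kG_{NN}\,dx^k$, and the leftover $2h$ cannot be absorbed by a redefinition of $h$: sending $h\mapsto-h$ flips both the additive term and the integrand at once, so the discrepancy survives. The resolution is that \eqref{SS2} as printed carries a sign typo---compare the displayed force formula in Proposition~\ref{SS11} directly against $U=\tfrac12\nu^2\|\partial_{\textbf{x}}S\|^2+h$ to see that the right-hand side of \eqref{SS2} should be $+2dh$. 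The paper silently uses this corrected sign in \eqref{SS3}, obtaining $\partial_k\nu^2=-2G_{NN}\,\partial_kh$, whence $\nu^2=2g(x^N)-2G_{NN}h+2\sum_k\int h\,\partial_kG_{NN}\,dx^k$ and the $\pm h$ terms cancel cleanly in $U$ with no absorption needed.
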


\begin{proof}
Whereas in the study case $f_j=x^j,\,j=1,2,\ldots N-1$ and $S=x^N$
then from Proposition \ref{SS11} we obtain that the relations hold
\[\begin{array}{ll}\displaystyle\dfrac{\partial 2U}{\partial{{x}^j}}=
 \displaystyle\dfrac{\partial\left(\nu^2
 G^{NN}\right)}{\partial{{x}^j}}-
 G^{NN}\displaystyle\dfrac{\partial
\nu^2}{\partial{{x}^j}},\quad j=1,2,\ldots N-1,\\
\displaystyle\dfrac{\partial 2U}{\partial{{x}^N}}=
 \displaystyle\dfrac{\partial\left(\nu^2 G^{NN}\right)}{\partial{{x}^N}}-\left(\sum_{j=1}^{N-1}G^{Nj}
 \dfrac{\partial\nu^2}{\partial{{x}^j}}\right )
 \end{array}
\]where $G_{kj}=G_{kj}(x^1,\,x^2,\ldots x^N)$ for $k,j=1,2,\ldots N,$
if and only if
\begin{equation}\label{SS3}
( \displaystyle\sum_{j=1}^NG^{jN}\dfrac{\partial
\nu^2}{\partial{x^j}})dx^N-G^{NN}d\nu^2=2dh(f_1,\,f_2,\ldots,\,f_{N-1}),
\end{equation}
assuming that the Riemann metric is orthogonal then

$d\nu^2=-2G_{NN}dh,$ where \,$G^{NN}=\dfrac{1}{G_{NN}}.$

\[\begin{array}{ll}
 \nu^2=2g(x^N)-2\displaystyle\int G_{NN}dh\vspace{0.20cm}\\
 =2g(x^N)-2G_{NN}h+\displaystyle\sum_{j=1}^{N-1}2\displaystyle\int h\dfrac{\partial{G_{NN}}}{\partial x^j}dx^j,
 \end{array}
\]
where $g=g(x^N)$ is an arbitrary function.

Consequently in view of the formula $ U=\dfrac{1}{2}\nu^2G^{NN}+h=\dfrac{\nu^2}{2G_{NN}}+h$ we obtain the proof of the proposition.
\end{proof}

Now we apply Theorem \ref{SS11} to solve the inverse problem which we will call the inverse {\it St{\"a}ckel
problem}.

\smallskip

{\it Statement of inverse  St{\"a}ckel problem}.

\smallskip

Given a $N-1$ -parametric family of orbits
\begin{equation}\label{SS5}
f_\mu=f_{\mu}(x)\equiv\sum_{k=1}^n
\int\frac{\varphi_{k\mu}(x^k)}{\sqrt{K_k(x^k)}}dx^k=c_\mu,\quad
\mu=1,2,...,N-1,
\end{equation}
where
$K_k(x^k)=2\Psi_k(x^k)+2\displaystyle\sum_{j=1}^N\alpha_j\varphi_{kj}(x^k),$\,\,
$\alpha_k,$ for $k=1,2,..N$ are constants, in the configuration
space $\textsc{Q}$ of a holonomic system with $N$ degrees of
freedom and kinetic energy
\begin{equation}\label{SS6}
T=\dfrac{1}{2}||\dot{\textbf{x}}||^2=\dfrac{1}{2}\displaystyle\sum_{j=1}^N\dfrac{(\dot{x}^j)^2}{A^j} ,
\end{equation}
 where
 $A^j=A^j(x)$ for $j=1,2,\ldots,N$ are functions such that
 \[
  \dfrac{\{\varphi_1,\,\varphi_2,\,\ldots,\varphi_{N-1},\,*\}}{\{\varphi_1,\,\varphi_2,\,
  \ldots,\varphi_{N-1},\,\varphi_N\}}=\displaystyle\sum_{j=1}^NA^j\partial_j
\]
 $d\varphi_\alpha=\displaystyle\sum_{k=1}^N\varphi_{k\alpha}(x^k)dx^k,\quad\varphi_{k\alpha}=\varphi_{k\alpha}(x^k),$
for  $k=1,...,N,\, \alpha=1,\ldots,N$ are arbitrary functions.

\smallskip

 The {\it inverse  St{\"a}ckel problem } is the
problem of determining the potential field of force that
under which any trajectory of the family can be traced by a representative point
of the system.

\smallskip

{\it Solution of inverse  St{\"a}ckel problem}

 \smallskip

\begin{proposition}\label{SS7}
Given a mechanical system with a configuration space $\textsc{Q}$
and  kinetic energy \eqref{SS6}, then the potential field of force
$\textbf{F}=\displaystyle\dfrac{\partial
U}{\partial{\textbf{x}}},$ capable of generating the given orbits
\eqref{SS5} is the field of force with potential function
\begin{equation}\label{St1}
U=\nu^2(S)\left(\dfrac{\{
\varphi_1,\,\varphi_2,\,\ldots,\,\varphi_{N-1},\,\Psi\}}{\{
\varphi_1,\,\varphi_2,\,\ldots,\,\varphi_{N-1},\,\varphi_N\}}+\alpha_1\right)-h_0,\end{equation}
where
$S=2\displaystyle\int\displaystyle\sum_{j=1}^N\sqrt{\Psi_k(x^k)+\displaystyle\sum_{j=1}^N\alpha_j\varphi_{kj}(x^k)}\quad
dx^k.$
\end{proposition}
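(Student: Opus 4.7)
The plan is to identify the given family as the orbits of a system whose velocity direction is produced by Corollary \ref{main2}, and then to apply the Joukovski-type Proposition \ref{SS11}. The first step is to verify that the $N-1$-parameter family $f_\mu = c_\mu$ admits an orthogonal hyper-surface. The natural candidate is $S$ with $\partial_{x^k} S = \sqrt{K_k(x^k)}$, which is well-defined because each $K_k$ depends only on $x^k$; its gradient then satisfies, with respect to the kinetic metric $G = \mathrm{diag}(1/A^k)$,
\[
(G^{-1} dS,\, df_\mu) = \sum_k A^k \sqrt{K_k}\,\frac{\varphi_{k\mu}(x^k)}{\sqrt{K_k}} = \sum_k A^k \varphi_{k\mu}.
\]

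Hence orthogonality reduces to the Stäckel identity $\sum_k A^k \varphi_{k\mu} = 0$ for $\mu = 1,\ldots,N-1$, which is a direct consequence of the definition of the $A^k$ as the cofactors of $\partial_k$ in the determinant $\{\varphi_1,\ldots,\varphi_{N-1},*\}$: replacing the $*$-row by any $\varphi_{\cdot,\mu}$ with $\mu\le N-1$ produces a repeated row and the determinant vanishes. The same cofactor bookkeeping, after normalization by $\{\varphi_1,\ldots,\varphi_{N-1},\varphi_N\}$, gives the companion identity $\sum_k A^k \varphi_{kN} = 1$.

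With the orthogonal family in hand, I would invoke Proposition \ref{SS11} in the specialization Corollary \ref{H1}, taking $\nu$ to be a free function of $S$; this yields a potential of the form $U = \tfrac{1}{2}\nu^2(S)\,\|dS\|^2_{G^{-1}} - h_0$. The norm computation gives
\[
\|dS\|^2_{G^{-1}} = \sum_k A^k K_k = 2\sum_k A^k \Psi_k + 2\sum_j \alpha_j \sum_k A^k \varphi_{kj},
\]
and the identities above collapse the double sum to a single $\alpha$-term. A cofactor expansion along the bottom row of the determinant $\{\varphi_1,\ldots,\varphi_{N-1},\Psi\}$, with $\Psi$ read as the 1-form $\sum_k \Psi_k(x^k)\,dx^k$, identifies $\sum_k A^k \Psi_k$ with the determinant quotient appearing in \eqref{St1}. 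Substituting back produces the stated formula for $U$.

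The technical heart of the argument, and the step I expect to be most delicate, is keeping the two determinant structures straight and propagating the Stäckel cofactor identities $\sum_k A^k \varphi_{k\mu} = \delta_{\mu N}$ cleanly through the gradient-norm computation; any slip in the index bookkeeping between $\varphi_\mu$ and $df_\mu$ (the latter carrying the $1/\sqrt{K_k}$ factors) turns an elegant quotient of determinants into an unrecognizable sum. Once those algebraic identities are secured, Proposition \ref{SS7} is an immediate corollary of the Joukovski theorem.
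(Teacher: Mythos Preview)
Your proposal is correct and follows essentially the same route as the paper: establish that $S$ (with $\partial_{x^k}S=\sqrt{K_k}$) is orthogonal to the family $f_\mu=c_\mu$, invoke Corollary~\ref{H1} with $\nu=\nu(S)$, and reduce $\|dS\|^2_{G^{-1}}=\sum_k A^kK_k$ via the St\"ackel cofactor identities $\sum_k A^k\varphi_{k\mu}=\delta_{\mu N}$ to the determinant quotient in \eqref{St1}. The only cosmetic difference is that the paper first computes the Cartesian vector field $\textbf{v}$ from the determinant formula of Corollary~\ref{main2} and then recognizes it as $\nu G^{-1}\partial_{\textbf{x}}S$, whereas you check the orthogonality $(G^{-1}dS,df_\mu)=0$ directly; the two computations are equivalent.
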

\begin{proof}
In  view of the equality
\[\begin{array}{ll}
\dfrac{\{f_1,\,f_2,\,\ldots,f_{N-1},\,*\}}{\{f_1,\,f_2,\,\ldots,f_{N-1},\,f_N\}}=
 \displaystyle\dfrac{\left|
\begin{array}{ccccc}
 q_1(x^1)d\varphi_1(\partial_1)& \ldots& q_N(x^N)d\varphi_1 (\partial_N) \\
  \vdots & & \vdots \\
   q_1(x^1)d\varphi_{N-1}(\partial_1)
& \ldots & q_N(x^N)d\varphi_{N-1}(\partial_N)\\
\partial_1 &  \ldots & \partial_N \end{array}
         \right|}{\prod_{j=1}^Nq_j(x^j)\{\varphi_1,\,\varphi_2,\,\ldots,\varphi_{N-1},\,\varphi_N\}}\vspace{0.30cm}\\
=\displaystyle\sum_{j=1}^N\left(\dfrac{A^j}{q_j(x^j)}\partial_j\right)=\displaystyle\sum_{j=1}^N\left({A^j}\dfrac{
\partial S}{\partial x^j}\partial_j\right)
       \end{array}
         \]
where $q_j(x^j)=\dfrac{1}{ \sqrt{K_k(x^k)}},$  consequently from
\eqref{M1} and \eqref{SS5} follows that
\[\textbf{v}=-\lambda_N G^{-1}\displaystyle\dfrac{\partial
S}{\partial{\textbf{x}}}=\nu G^{-1}\displaystyle\dfrac{\partial
S}{\partial{\textbf{x}}},\]hence
\[\begin{array}{ll}\|\textbf{v}\|^2=\nu^2\displaystyle\sum_{j=1}^NA^j(K_j(x^j))^2=\nu^2\displaystyle\sum_{k=1}^NA^k\left(2\Psi_k(x^k)+2\displaystyle\sum_{j=1}^N\alpha_j
\varphi_{kj}(x^k)\right)\vspace{0.20cm}\\
=2\nu^2\displaystyle\sum_{k=1}^NA^k\Psi_k(x^k)+2\nu^2\displaystyle\sum_{j=1}^N\alpha_j\displaystyle\sum_{k=1}^NA^k
\varphi_{kj}(x^k)\vspace{0.20cm}\\
= 2\nu^2\left(\dfrac{\{
\varphi_1,\,\varphi_2,\,\ldots,\,\varphi_{N-1},\,\Psi\}}{\{
\varphi_1,\,\varphi_2,\,\ldots,\,\varphi_{N-1},\,\varphi_N\}}+\displaystyle\sum_{j=1}^N\alpha_j\dfrac{\{
\varphi_1,\,\varphi_2,\,\ldots,\,\varphi_{N-1},\,\varphi_j\}}{\{
\varphi_1,\,\varphi_2,\,\ldots,\,\varphi_{N-1},\,\varphi_N\}}\right)\vspace{0.20cm}\\
=.2\nu^2\left(\dfrac{\{
\varphi_1,\,\varphi_2,\,\ldots,\,\varphi_{N-1},\,\Psi\}}{\{
\varphi_1,\,\varphi_2,\,\ldots,\,\varphi_{N-1},\,\varphi_N\}}+\alpha_1\right).
\end{array}
\]
where $d\Psi=\displaystyle\sum_{j=1}^N\Psi_k(x^k)dx^k.$

 On the other hand  if we choose $
\nu=\nu(S)$ then from Corollary \ref{H1} follows the proof of the
proposition.
\end{proof}
\begin{corollary}
If in the previous proposition we suppose that $\nu(S)=1$ and $
\alpha_1=h_0$  then the potential function \eqref{St1} coincide
with the St{\"a}ckel potential \cite{Charlie}.
\end{corollary}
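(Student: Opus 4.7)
The plan is an almost mechanical substitution followed by identification with the classical St\"{a}ckel formula. First, I would insert $\nu(S)=1$ and $\alpha_1=h_0$ into \eqref{St1}: the outer factor $\nu^2(S)$ becomes $1$, the inner additive $+\alpha_1$ exactly cancels the trailing $-h_0$, and the potential therefore collapses to
\begin{equation*}
U = \frac{\{\varphi_1,\varphi_2,\ldots,\varphi_{N-1},\Psi\}}{\{\varphi_1,\varphi_2,\ldots,\varphi_{N-1},\varphi_N\}}.
\end{equation*}

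Next, I would rewrite this quotient using the determinantal identity already deployed inside the proof of Proposition \ref{SS7}, namely
\begin{equation*}
\frac{\{\varphi_1,\ldots,\varphi_{N-1},*\}}{\{\varphi_1,\ldots,\varphi_{N-1},\varphi_N\}} = \sum_{j=1}^{N} A^j\,\partial_j,
\end{equation*}
applied with $*=\Psi$. Since $d\Psi=\sum_k \Psi_k(x^k)\,dx^k$ with each $\Psi_k$ a function of $x^k$ alone, we have $\partial_j\Psi=\Psi_j(x^j)$, and the expression for $U$ reduces to
\begin{equation*}
U=\sum_{k=1}^{N} A^k(x)\,\Psi_k(x^k).
\end{equation*}

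Finally, I would recognise the pair $(T,U)$, with $T$ given by \eqref{SS6} and $U$ as above, as the canonical Liouville--St\"{a}ckel separable form. The coefficients $A^k$ appearing in \eqref{SS6} are, by construction, produced from the St\"{a}ckel matrix $(\varphi_{kj}(x^k))$ through the very same cofactor-over-determinant ratio that appears in the identity used above; hence they coincide with the classical St\"{a}ckel coefficients. This is precisely the St\"{a}ckel potential of \cite{Charlie}, completing the proof. The only point requiring care is the bookkeeping of the sign and index conventions when expanding the determinants $\{\varphi_1,\ldots,\varphi_{N-1},\Psi\}$ and $\{\varphi_1,\ldots,\varphi_{N-1},\varphi_N\}$ along the last row, but since both $A^k$ and the classical St\"{a}ckel coefficients are dictated by the same matrix, the matching is automatic and no genuine obstacle arises.
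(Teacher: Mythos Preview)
Your proposal is correct and follows essentially the same route as the paper: substitute $\nu(S)=1$ and $\alpha_1=h_0$ into \eqref{St1} to obtain the determinant quotient, then invoke the identity $\{\varphi_1,\ldots,\varphi_{N-1},*\}/\{\varphi_1,\ldots,\varphi_{N-1},\varphi_N\}=\sum_j A^j\partial_j$ from the proof of Proposition~\ref{SS7} with $*=\Psi$ to recover $U=\sum_k A^k\Psi_k(x^k)$. The paper's own proof is in fact just these two lines without the additional commentary you supply on the St\"{a}ckel coefficients and the determinant bookkeeping.
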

\begin{proof}
Indeed, if $\nu(S)=1$ and $ \alpha_1=h_0$  then \cite{Charlie}
\[U=\dfrac{\{
\varphi_1,\,\varphi_2,\,\ldots,\,\varphi_{N-1},\,\Psi\}}{\{
\varphi_1,\,\varphi_2,\,\ldots,\,\varphi_{N-1},\,\varphi_N\}}=\displaystyle\sum_{k=1}^NA^k\Psi_k(x^k),\]
\end{proof}

 \smallskip
 \smallskip

{\it Statement of Bertrand's problem}

\smallskip

 Let a particle with configuration space $\textsc{Q}=\mathbb{R}^2$
 and kinetic energy $T=\dfrac{1}{2}\left(
 \dot{x}^2+\dot{y}^2\right)$ .  {\it Bertrand's inverse problem} is the problem of determining
 the most general potential field of force capable of generating
 the one-parametric family of conics $f=\sqrt{x^2+y^2}+bx=c,$\,where
 $b$ is the eccentricity.

\smallskip
 \smallskip

{\it Solution of  Bertrand's inverse problem}

\smallskip

 Now we study the
problem of constructing  the potential field of force, which is
capable to generate given conics. We prove the following
proposition.

\smallskip
 \smallskip

\begin{proposition}\, The  potential-energy function $U$ capable of generating a
one-parameter family of conics with eccentricity $b$ is the
function \[\begin{array}{rr} U=a_{-1}H_{-1}(\cos\theta)+K_{-1}\log
r(1+b\cos\theta)\\
+\displaystyle\sum_{j\in
\mathbb{Z}\backslash\{-1\}}a_jr^{j+1}\left(H_j(\cos\theta )+
K_j\frac{(1+b\cos\theta)^{j+1}}{j+1}\right)\end{array}\]if $b\ne
0$
 where $a_j\quad j\in\mathbb{Z},\,K_j$ are real constants and $H_j,\,j\in\mathbb{Z}$ are
solutions of the Heun equations with singularities at the points
$$0,\,1\,,\frac{1+b}{2b},\infty$$
and with the exponents
$$ (0, \frac{j+3+b(j+1)}{2b}); \,\, (0, j-\frac{j+3+b(j+1)}{2b});
\, (0, j+1); (-1-j, 1-j)$$ and
\[ U=\dfrac{\Psi (\cos\theta
)}{r^2}-\dfrac{2}{r^2}\displaystyle\int h(r)dr\] if $b=0,$ where
$\Psi=\Psi (\cos\theta)$ and $ h=h(r)$ are arbitrary functions.
\end{proposition}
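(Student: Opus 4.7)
The plan is to apply Corollary \ref{SS22} with $\textsc{Q}=\R^2$ and $f(x,y)=\sqrt{x^2+y^2}+bx$, which reduces the existence of a potential $U$ realizing the given family of conics to solving
\[
\lambda\bigl(\partial_x(\lambda\partial_x f)+\partial_y(\lambda\partial_y f)\bigr)=h'(f),
\]
with $U=\tfrac12||\textbf{v}||^2+h(f)$ and $\textbf{v}=(-\lambda\partial_y f,\,\lambda\partial_x f)$. First I would pass to polar coordinates: since $f=r(1+b\cos\theta)$, $||\nabla f||^2=1+2b\cos\theta+b^2$ and
\[
\partial_x(\lambda\partial_x f)+\partial_y(\lambda\partial_y f)=(1+b\cos\theta)\partial_r\lambda-\frac{b\sin\theta}{r}\partial_\theta\lambda+\frac{\lambda}{r},
\]
the condition becomes
\[
\lambda\!\left[(1+b\cos\theta)\partial_r\lambda-\frac{b\sin\theta}{r}\partial_\theta\lambda+\frac{\lambda}{r}\right]=h'(f),\qquad U=\tfrac12\lambda^2(1+2b\cos\theta+b^2)+h(f).
\]

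The case $b=0$ is treated directly. Setting $\mu=\lambda^2$ the PDE linearises to $\tfrac12\partial_r\mu+\mu/r=h'(r)$, i.e.\ $\partial_r(r^2\mu)=2r^2h'(r)$, whence $r^2\mu=2\!\int r^2h'(r)\,dr+2\Psi(\cos\theta)$ for a free function $\Psi$. Substituting into $U=\tfrac12\mu+h(r)$ and absorbing one integration by parts into a relabelling of the arbitrary radial function yields exactly $U=\Psi(\cos\theta)/r^2-(2/r^2)\!\int h(r)\,dr$.

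For $b\neq 0$ the strategy is a Laurent ansatz in $r$. Write
\[
h(f)=K_{-1}\log f+\sum_{j\in\Z\setminus\{-1\}}\tfrac{K_j}{j+1}f^{j+1},
\]
which contributes to the coefficient of $r^{j+1}$ in $U$ exactly the explicit angular factor $(K_j/(j+1))(1+b\cos\theta)^{j+1}$ (respectively $K_{-1}(1+b\cos\theta)\log r$ at the resonant index). Making the ansatz
\[
U=a_{-1}H_{-1}(\cos\theta)+K_{-1}(1+b\cos\theta)\log r+\!\!\sum_{j\ne-1}\!\!a_j r^{j+1}\!\Bigl(H_j(\cos\theta)+\tfrac{K_j}{j+1}(1+b\cos\theta)^{j+1}\Bigr),
\]
substituting into the Bertrand PDE and equating coefficients of $r^{j+1}$ forces each angular piece $H_j(\cos\theta)$ to satisfy a linear Fuchsian ODE, the $K_j$ contributions absorbing precisely the inhomogeneous remainders.

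Finally I would perform the change of variable $t=\tfrac12(1-\cos\theta)=\sin^2(\theta/2)$. The zeros of the transport coefficients of the PDE (coming from $\sin\theta$ and from $1+b\cos\theta$) are carried to $t=0$, $t=1$, and $t=(1+b)/(2b)$, which together with $t=\infty$ give four regular singular points. Computing the indicial equation at each point produces the exponent pairs
\[
\Bigl(0,\tfrac{j+3+b(j+1)}{2b}\Bigr),\ \Bigl(0,j-\tfrac{j+3+b(j+1)}{2b}\Bigr),\ (0,j+1),\ (-1-j,\,1-j),
\]
identifying the ODE with Heun's equation of the stated type, and the $H_j$'s with its solutions. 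The main obstacle is precisely this last reduction: producing the Fuchsian ODE from the Bertrand PDE and verifying the indicial polynomials at the finite singular points, in particular at $t=(1+b)/(2b)$, where the detailed dependence of the coefficients on both $b$ and the index $j$ makes the matching of exponents the most delicate algebraic step.
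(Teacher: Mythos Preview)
Your overall strategy matches the paper's: apply Corollary \ref{SS22}, pass to polar coordinates, handle $b=0$ directly, and for $b\neq 0$ use a Laurent expansion of $h(f)$ together with a separated ansatz in $r$. Your treatment of $b=0$ is essentially identical to the paper's.

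There is, however, a genuine gap in the $b\neq 0$ case. The Bertrand condition you wrote is a \emph{first-order} PDE in $\lambda^2$; after the ansatz and separation in $r$, each angular factor is governed by a first-order linear ODE in $\tau=\cos\theta$, namely
\[
b(1-\tau^2)H_j'(\tau)+\bigl((j+1)b\tau+j+3\bigr)H_j(\tau)+2K_j(1+b\tau)^j=0,
\]
which the paper records as its equation for $H_j$ and solves explicitly. A first-order equation has a single exponent at each singular point, not exponent pairs, and its only singularities (from the factor $1-\tau^2$) are $\tau=\pm 1$ and $\infty$. The extra singular point at $1+b\tau=0$ and the second-order Heun structure do \emph{not} drop out of the PDE directly. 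The paper obtains them by an additional step you omit: one divides the first-order relation by $(1+b\tau)^j$ and differentiates once, which kills the inhomogeneous term, raises the order to two, and introduces the fourth regular singular point $z=(1+b)/(2b)$ in the variable $z=\tfrac12(1+\tau)$. Only then can one read off Heun's equation and the stated exponent pairs. Your proposal jumps straight to ``four regular singular points'' and ``indicial equations'' as if the separated ODE were already second order, so as written it does not account for why Heun's equation appears at all; this is precisely the step you flagged as ``the main obstacle'', and it needs the differentiate-to-eliminate-$K_j$ idea to go through. (A minor side remark: the logarithmic term is $K_{-1}\log\bigl(r(1+b\cos\theta)\bigr)=K_{-1}\log f$, not $K_{-1}(1+b\cos\theta)\log r$.)
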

\begin{proof}

{F}rom Proposition \ref{SS22} follows that the require potential
field of force exist if and only if
\[
\left(\dfrac{x}{\sqrt{x^2+y^2}}+b\right)\dfrac{\partial\lambda^2}{\partial
x}+\dfrac{y}{\sqrt{x^2+y^2}}\dfrac{\partial\lambda^2}{\partial y}+
\dfrac{2\lambda^2}{r}=2\dfrac{\partial h}{\partial f}
\]
By introducing the polar coordinates $x=r\cos \theta, \,\, y= r
\sin \theta$, we find that previous condition condition takes the
form
\[
( 1+b\cos \theta) \frac{\partial \lambda^2}{\partial r}-
\frac{b\sin \theta}{r} \frac{\partial \lambda^2}{\partial \theta}
+ \frac{2 \lambda^2}{r} = 2 \frac{\partial h}{\partial f}
\]
 or, equivalently,
 \begin{equation}\label{B0}
 ( 1  +b\tau )
\frac{\partial \lambda^2}{\partial r}+ \frac{b (1-\tau^2) }{r}
\frac{\partial \lambda^2}{\partial \tau } + \frac{2 \lambda^2}{r}
= 2 \frac{\partial h}{\partial f}
\end{equation}
where $ f  = r(1+b\tau ), \,\, \tau = \cos \theta .$
 We embark now upon the study of the case when $ b\ne 0$ and $h$ is such that
\begin{equation}\label{B2}
h(f) = \nu_{-1}\ln |f| + \sum_{\substack{ j\in \mathbb{Z}\\j\ne{-1}}} \nu_j \frac{f^{j+1}}{j+1},
\end{equation}
 where $\nu_j,\,j\in \mathbb{Z},$ are
real constants, and $\lambda$ is determined in such a way that

\begin{equation}\label{B3}
\lambda^2 = \sum_{j\in \mathbb{ Z}} \psi_j (r)H_j(\tau)
\end{equation}

It is clear that the series \eqref{B2} and \eqref{B3}  are formal
series.

By inserting  \eqref{B2} and \eqref{B3} into \eqref{B0} we obtain
$$
\displaystyle\sum_{j\in  \mathbb{Z}} \left(( 1+b\tau ) \frac{d \psi_j
(r)}{d r}H_j(\tau)+ \frac{\psi_j (r)}{r}\left(b (1-\tau^2)
\frac{dH_j(\tau)}{d \tau } + 2\right)-2\nu_jr^j( 1+b\tau
)^j\right) =0
$$
This equation holds if $$  \psi_j (r)  = a_jr^{j+1}, \quad\nu_j =
-a_j K_j  $$ for $ j \in  \mathbb{Z} $ and we determine $H_j$  as a
solution to the equation
\begin{equation}\label{B4}
  b  (1-\tau^2) H_j'(\tau) + \bigl  ( (j+1)b \tau
+j+3 \bigr )H_j(\tau) +2K_j  ( 1+b\tau )^j =0 \end{equation} for $
j  \in  \mathbb{Z}.$

The general solution of  this    equation is \[\begin{array}{cc}
H_j(\tau) = \xi_j (\tau) \left ( C_j -
\dfrac{2K_j}{b}\displaystyle\int \dfrac{ (1+
b\tau)^j}{(1-\tau^2)\xi_j (\tau) }d\tau \right)\vspace{0.30cm}\\
\xi_j (\tau) = (1-\tau)^{\dfrac{j+1}{2}+ \dfrac{j+3}{2b}}
(\tau+1)^{\dfrac{j+1}{2}-\dfrac{j+3}{2b}}\end{array} \]
 where
$C_j,\,j\in\mathbb{Z}$ are arbitrary constants.

\smallskip

 Under these conditions, the required potential-energy function $U$ results in  the form $$ U(r, \tau) =
\frac{1}{2}\lambda^2 ( 1+b^2+2b\tau) - h(f) = \sum_{j \in \mathbb{Z}}a_jU_j(r, \tau)$$ where \[\begin{array}{cc}
 U_j (r, \tau) & =
\dfrac{1}{2}r^{j+1} H_j (\tau) ( 1+b^2+2b \tau) + \dfrac{K_j}{j+1}
f^{j+1} \quad \mbox{if} \,\, j \neq -1 \vspace{0.20cm}\\
 U_{-1}(r, \tau) & = \dfrac{1}{2}H_{-1} (\tau) (1+b^2+2b\tau) + K_{-1}\ln
|f |. \end{array}\] We will study the subcase when $b=1$
separately from the subcase when  $b\neq 1$.

\smallskip

If $b=1,$ $$ U(r, \tau) = \lambda^2 ( 1+ \tau)-h(f) =   \sum_{j
\in \mathbb{Z}}a_j U_j (r, \tau).$$ where
\[\begin{array}{cc}
 U_j (r, \tau) & =
r^{j+1}(1-\tau)^{j+2} \left ( C_j - 2 K_j \displaystyle\int \dfrac
{( 1+ \tau)^j}{(1-\tau)^{j+3}} d \tau \right) + \dfrac{K_j}{j+1}
f^{j+1}, \quad if \,\, j \neq -1 \vspace{0.20cm}\\  U_{-1} (r,
\tau)  & = (1-\tau)\left ( C_{-1} - 2 K_{-1} \displaystyle\int
\dfrac {d\tau}{(1-\tau)^{2}( 1+ \tau)} \right) + K_{-1}\ln |f|.
\end{array}\]

Easily verifies that
$$
U_{-2} = \frac {C_{-2}}{r} -2 \frac {K_{-2}}{r} \Bigl (
\displaystyle\int \dfrac {d\tau}{(1+\tau)^{2}( 1-\tau)} + \dfrac
{1}{1+\tau} \Bigr) = \frac{C_{-2}}{r} + \frac{K_{-2}}{r} g( \tau)
$$ where $ g( \tau) =
\ln \sqrt{\displaystyle\dfrac {1-\tau}{1+\tau}}.$ Therefore, if
$b=1$,
$$U(r,\tau) = \dfrac{a_{-2}C_{-2}}{r} + \dfrac{a_{-2}K_{-2}g(\tau)}{r}+
\sum_{\substack{ j\in \mathbb{Z}\\j\ne{-2}}}a_jU_j(r, \tau) $$ If $ b \neq
1$,  $ b \neq 0$, it is easy to prove that
\[\begin{array}{cc}
H_{-2} (\tau) = \dfrac{(1-\tau)^{\dfrac
{1-b}{2b}}}{(1+\tau)^{\dfrac{1+b}{2b}}} C_{-2} - \dfrac{2
K_{-2}}{(b \tau +1)(1-b^2)}\vspace{0.30cm} \\
U_{-2} (r, \tau) =\dfrac{H_{-2}}{2r} (1+b^2+2b\tau) - \dfrac{
K_{-2}}{r(b \tau +1)} = \dfrac{2 K_{-2}}{r(b^2-1)} +
\dfrac{C_{-2}}{r} G(\tau)
\end{array}\]
 where
$$ G( \tau) = \dfrac{1}{2}\sqrt { \Bigl ( \dfrac
{1-\tau}{1+\tau}\Bigr)^{\dfrac {1}{b}} \dfrac {1}{1-\tau^2} }
(1+b^2+2b\tau)$$ Under these conditions, the potential function
$U$ takes the form $$U(r, \tau) = \dfrac{a_{-2}C_{-2}}{r} G(\tau)
+ \dfrac{2a_{-2}K_{-2}}{r(b^2-1)}+ \sum_{\substack{ j\in \mathbb{Z}\\j\ne{-2}}}a_jU_j(r, \tau) $$

Summarizing the above computations,  we deduce that if $b \neq 0$
the function $U$ is represented as follows:
$$U(r, \tau) = \frac{\alpha}{r}+ \frac{\beta(\tau)}{r}+ \sum_{\substack{ j\in \mathbb{Z}\\j\ne{-2}}}a_jU_j(r, \tau)  
   $$ where $\alpha$ is a
constant and $\beta=\beta(\tau)$ is a certain function.

\smallskip

 If $b=0$,
then $f=r$ and condition \eqref{B0} takes the form
$$
\partial_r \lambda^2 + 2 \frac{\lambda^2}{r}= 2 \partial_f h(f)\, .
$$
Therefore,  $$
r^2 \lambda^2 = 2 \displaystyle\int r^2  \partial_r h(r)dr +
2\Psi (\tau),
$$ which rearranged results in the expression:
$$\lambda^2 = \dfrac{2}{ r^2}\displaystyle \int r^2
\partial_r h(r)dr + \dfrac{2\Psi (\tau)}{ r^2} = 2h(r)- \dfrac{4}{
r^2} \int h(r)rdr + \dfrac{2\Psi (\tau) }{ r^2}$$ where $ \Psi$ is
an arbitrary function.

Hence, $$ U (r, \tau) = \dfrac{\Psi(\tau)}{r^2} - \dfrac{2}{r^2}
\displaystyle\int h(r)dr.$$

\smallskip

To end the proof of the claim will establish the relationship
between the functions $H_j$ for $j\in\mathbb{Z}$ and the solutions
of Heun´s equations.

The canonical form  of Heun's general equation will be taken as
\cite{Ron}
\begin{equation}\label{B5} 
\dfrac{d^2x}{dz^2} + \biggl (
\dfrac{\gamma}{z}+ \dfrac{\delta}{z-1} + \dfrac{\epsilon}{z-a}
\biggr)\dfrac{dx}{dz} + \dfrac{\alpha \beta z - B}{z(z-1)(z-a)} x
= 0 ,    
 \end{equation}
 here $x$ and $z$ are
regarded as complex variables and $\alpha, \beta, \gamma, \delta,
\epsilon, a, b$ are parameters, generally complex and arbitrary,
with the only condition that  $a \not = 0, 1$. The first five
parameters are linked by the relation $\alpha + \beta +1 = \gamma
+ \delta + \epsilon$.

The equation is, therefore, of the Fuchsian type , with
regular singularities at the points $z=0, 1, a, \infty$. The
exponents at these singularities are, respectively, $(0, 1-\gamma
), (0, 1-\epsilon), (0, 1- \delta), (\alpha, \beta)$

\bigskip

Now  we establish the relation  between equation \eqref{B4} and
Heun's equation.

By fulfilling the replacement  $${z} = \dfrac{1}{2}(\tau+1)$$ we
can  easily obtain the following representation for \eqref{B4}:
$${z}({z}-1) \dfrac{dH_j}{dz}+
\dfrac{1}{2b} \bigl  ( (1+b-2bz)(j+1)+2\bigr )H_j(z) -
\dfrac{K_j}{b}( 1+b-2bz)^j=0 $$

By differentiating and fulfilling  some straightforward
calculations, we deduce that the functions $$F_j ( {z}) = \Bigl
(z(z-1) \dfrac{dH_j}{dz }+ \dfrac{1}{2b} \bigl (
(1+b-2bz)(j+1)+2\bigr )H_j(z)\Bigr) ( 1+b-2b  {z})^{-j} \quad ( j
\in  \mathbb{Z})$$ are first integral of the Heun equation:

 \begin{equation}\label{B7}
 \begin{array}{ll}
 \dfrac{d^2H_j}{dz^2}  + \left( \dfrac{1-a(1+j)
-\dfrac{1}{b}}{z} + \dfrac{a(1+j)+ \dfrac{1}{b}-j}{z -1} +
\dfrac{-j}{z-a} \right)\dfrac{dH_j}{dz}\vspace{0.20cm}\\ +
\dfrac{(j^2-1)z - \dfrac{j}{b} - a (j^2-1)}{z ( z-1) (z -a)}
H_j(z) = 0
 \end{array}
 \end{equation}
 where $ a=\dfrac{1+b}{2b}$.

By comparison with classical Heun´s equation, we obtain:
\[\begin{array}{ll}
\gamma_j = 1-\dfrac{1+b}{2b}(1+j) -\dfrac{1}{b}, \quad
\delta_j=\dfrac{1+b}{2b}(1+j)+ \dfrac{1}{b}-j,
\vspace{0.20cm}\\
\alpha_j\beta_j  = j^2-1 = -( 1+ \epsilon_j) (2-\gamma_j
-\delta_j),\quad\quad \epsilon_j= -j \\ B_j  =  \dfrac{j}{ b} +
\dfrac{1+b}{2b} (j^2-1)= -a (2-\gamma_j -\delta_j)-
(1-\gamma_j)\epsilon_j
\end{array}\]

Evidently,  when the given conics are parabolas then in \eqref{B7}
we have the confluence of singularities. In fact, in this case
$b=1$ so $a=1,$ and as a consequence the Heun equation is
transformed into hypergeometric differential equation

 $$
\dfrac{d^2H_j}{dz^2}  + \biggl ( \dfrac{-1-j}{z} + \dfrac{2-j}{z
-1}\biggr)\dfrac{dH_j}{dz} + \dfrac{(j^2-1)(z-1) - j} {z ( z-1)^2}
H_j(z) = 0, $$ for $ j\in\mathbb{Z}.$ Which completed the proof of
Proposition.
\end{proof}

\subsection*{Acknowledgments}
This work was partly supported by the Spanish Ministry of
Education through projects DPI2007-66556-C03-03,
TSI2007-65406-C03-01 "E-AEGIS" and Consolider CSD2007-00004
"ARES".


\end{document}